\documentclass[11pt]{article}
\usepackage{fullpage}
\usepackage{amsfonts, amsthm, amssymb, amsmath}
\usepackage{helvet}
\usepackage{framed}
\usepackage{verbatim}
\usepackage{epsfig}
\usepackage[pagebackref=true, colorlinks=true, urlcolor=blue, citecolor=blue, linkcolor=blue]{hyperref}

\usepackage{tikz}

\newtheorem{theorem}{Theorem}[section]
\newtheorem{corollary}[theorem]{Corollary}
\newtheorem{proposition}[theorem]{Proposition}
\newtheorem{lemma}[theorem]{Lemma}
\newtheorem{conjecture}[theorem]{Conjecture}

\numberwithin{equation}{section}

\theoremstyle{definition}
\newtheorem{definition}[theorem]{Definition}
\newtheorem{example}[theorem]{Example}

\theoremstyle{remark}
\newtheorem{remark}[theorem]{Remark}

\newcommand{\Q}{\mathbb Q}
\newcommand{\R}{\mathbb R}
\newcommand{\Z}{\mathbb Z}
\newcommand{\calS}{\mathcal S}
\newcommand{\calO}{\mathcal O}
\DeclareMathOperator{\rank}{rank}

\newcommand{\kerp}{\operatorname{ker}}

\title{On the Harborth Conjecture \\
Part I}
\vspace{0.15in}
\author{
{Shiva Kintali}\footnote{Email : {shiva.kintali@gmail.com}} \\
}
\date{June 1st 2026}
\begin{document}
\maketitle
\begin{abstract}
Harborth's conjecture states that every planar graph has a crossing-free straight-line drawing in which every edge has an integer length. Kleber's strengthening asks for the vertices themselves to have integer coordinates. In this series of papers, we make progress towards settling these conjectures. We reduce Kleber's conjecture to local rational-distance statements for special polygons with at most five vertices. The triangle case is known from the results of Almering and Berry. In this paper, we prove the existence of a rational-distance point on an interior integer diagonal in all the cases for non-degenerate quadrilaterals. In the upcoming papers, we focus on non-degenerate pentagons and then degenerate quadrilaterals and degenerate pentagons. \\

\noindent {\bf{Keywords}}: algebraic number theory, algebraic geometry, planar graphs, integral drawings, elliptic curves, rational distances, rational points.
\end{abstract}

\section{Introduction}\label{sec:intro}

Harborth's conjecture \cite{KemnitzHarborth2001} states that every planar graph has a crossing-free
straight-line drawing in which every edge has integer length. Kleber's
strengthening asks for the vertices themselves to lie on the integer lattice.
In this paper, we call the latter an \emph{integral drawing}: all
vertices lie in \(\Z^2\), all edges are straight-line segments, no two edges
cross except at common endpoints, and every edge has integer length.

\emph{The Kemnitz--Harborth algorithm} is inductive. It is enough to consider
triangulated planar graphs. Every triangulation has a vertex of degree at
most five. Delete such a vertex \(v\), draw the smaller graph, and then put
\(v\) back inside the face bounded by its neighbors. Since \(v\) has degree
\(d\leq5\), the local face is a triangle, quadrilateral, or pentagon.

The graph-theoretic part of the method is elementary. The hard part is
Diophantine geometry. We need a point inside the relevant local polygon
whose distances to all boundary vertices are rational. If such a point has
rational coordinates, then after a final global scaling all coordinates and
all edge lengths become integers.

\subsection{Basics}

\begin{definition}[Plane straight-line drawing]
Let \(G=(V,E)\) be a finite simple graph. A straight-line drawing of \(G\)
is an injective map $\varphi:V\longrightarrow \R^2$
in which each edge \(uv\in E\) is represented by the segment
\([\varphi(u),\varphi(v)]\). The drawing is plane if two edge segments meet
only when the corresponding graph edges share an endpoint, and only at
that endpoint.
\end{definition}

\begin{definition}[Integral drawing]
An integral drawing of \(G\) is a plane straight-line drawing
\(\varphi:V(G)\to\Z^2\) such that
\[
\|\varphi(u)-\varphi(v)\|\in\Z
\qquad\text{for every }uv\in E(G).
\]
\end{definition}

\begin{definition}[Rational drawing]
A rational drawing of \(G\) is a plane straight-line drawing
\(\varphi:V(G)\to\Q^2\) such that
\[
\|\varphi(u)-\varphi(v)\|\in\Q
\qquad\text{for every }uv\in E(G).
\]
Rational drawings are used as an intermediate object. A single scaling turns
them into an integral drawing.
\end{definition}

\begin{lemma}[Scaling]\label{lem:scaling}
If \(G\) has a rational drawing, then \(G\) has an integral drawing.
\end{lemma}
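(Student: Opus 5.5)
Take a rational drawing \(\varphi:V(G)\to\Q^2\) and multiply it by one suitably chosen positive integer \(N\). We will check that the scaled map \(\psi=N\varphi\) is an integral drawing. Because \(G\) is finite, only finitely many rational numbers occur: the \(2|V|\) coordinates of the points \(\varphi(v)\), and the \(|E|\) edge lengths \(\|\varphi(u)-\varphi(v)\|\), which are rational by hypothesis. Choose \(N\) to be the least common multiple of the denominators of all these numbers, written in lowest terms. If \(V(G)\) is empty, or there are no edges, the corresponding part of this list is simply empty and nothing changes.

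Next we verify the three requirements for \(\psi=N\varphi\).

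\emph{Lattice points.} Each coordinate of \(\psi(v)\) is \(N\) times a rational whose denominator divides \(N\), so it is an integer. Hence \(\psi(V)\subseteq\Z^2\).

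\emph{Integer edge lengths.} By homogeneity of the Euclidean norm,
\[
\|\psi(u)-\psi(v)\|=N\,\|\varphi(u)-\varphi(v)\|.
\]
This is an integer for the same reason.

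\emph{Injective and plane.} The map \(x\mapsto Nx\) is a homothety of \(\R^2\) with positive ratio. It is therefore a bijection that maps segments onto segments: it sends \([\varphi(u),\varphi(v)]\) onto \([\psi(u),\psi(v)]\). It also preserves intersections of sets, since it is a bijection. So \(\psi\) is injective. Moreover, two edge segments of \(\psi\) meet exactly when the corresponding segments of \(\varphi\) meet, and they meet at the image of the same point. The plane condition therefore transfers directly from \(\varphi\) to \(\psi\).

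There is no real obstacle in this argument. The only point needing care is that the same scaling factor has to clear the denominators of the coordinates and of the edge lengths simultaneously. For that reason we take the lcm over both sets of numbers, not over the coordinates alone. Clearing only the coordinate denominators would not suffice: lengths that are rational but not integral could still remain.
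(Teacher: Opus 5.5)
Your proposal is correct and follows essentially the same argument as the paper. The paper also scales by a single positive integer divisible by the denominators of all vertex coordinates and all edge lengths, and it notes that the homothety preserves straightness and crossing-freeness.
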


\begin{proof}
Choose a positive integer \(M\) divisible by the denominators of all vertex
coordinates and by the denominators of all edge lengths. Replacing every
point \(p\) by \(Mp\) sends vertices to \(\Z^2\). It also multiplies every
edge length by \(M\), so all edge lengths become integers. Scaling preserves
straightness and crossing-freeness.
\end{proof}

\begin{conjecture}[Harborth \cite{KemnitzHarborth2001}]\label{conj:harborth}
Every planar graph has a plane straight-line drawing whose edge lengths are
integers.
\end{conjecture}

\begin{conjecture}[Kleber \cite{Kleber2008}]\label{conj:kleber}
Every planar graph has an integral drawing.
\end{conjecture}

\subsection{Planar graphs}

\begin{definition}[Triangulation]
A plane graph is a simple planar graph together with a fixed crossing-free
topological embedding. A triangulation is a plane graph in which
every face, including the outer face, is bounded by a triangle.
\end{definition}

\begin{lemma}[Triangulating a plane graph]\label{lem:triangulate}
Every simple plane graph with at least three vertices can be extended, by
adding only non-crossing edges and no new vertices, to a plane triangulation.
\end{lemma}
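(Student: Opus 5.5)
We argue by a maximality argument. Let \(G\) be a simple plane graph with \(n\ge 3\) vertices. Among all simple plane graphs \(H\) on the same vertex set that contain \(G\) and whose embedding extends the embedding of \(G\), choose one with the maximum number of edges. Such an \(H\) exists because a simple graph on \(n\) vertices has at most \(\binom n2\) edges, so the family is finite in edge count. We claim \(H\) is a triangulation, i.e.\ every face of \(H\), including the outer face, is bounded by a triangle.

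First we show that \(H\) is connected and in fact \(2\)-connected. If \(H\) had two components, some face would be incident to vertices of both. Joining them by an arc drawn inside that face keeps the embedding crossing-free and adds an edge, which contradicts maximality. Similarly, if \(H\) had a cut vertex \(c\), walk around \(c\): some face \(f\) contains consecutive neighbours \(u,w\) of \(c\) lying in different blocks. The edge \(uw\) is absent, since otherwise \(u\) and \(w\) would be in the same block, so it can be drawn inside \(f\), again contradicting maximality. Here \(n\ge 3\) guarantees such \(u,w\) exist. Hence every face of \(H\) is bounded by a cycle.

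Now let \(f\) be a face bounded by a cycle \(v_1v_2\cdots v_k\) with \(k\ge 4\). If \(v_1v_3\) is not an edge of \(H\), we draw it as a chord inside \(f\) and contradict maximality. So \(v_1v_3\) is an edge, and it runs outside \(f\). By the same argument \(v_2v_4\) is also an edge outside \(f\). In the closed region complementary to the open disk \(f\), the four vertices \(v_1,v_2,v_3,v_4\) appear in this cyclic order on the boundary. By the Jordan curve theorem, the two curves \(v_1v_3\) and \(v_2v_4\) drawn in that region must cross, which contradicts planarity of \(H\). Therefore \(k=3\) for every face, and \(H\) is the desired triangulation. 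It is obtained from \(G\) only by adding non-crossing edges.

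The main obstacle is making the topological steps rigorous: drawing a new edge inside a face without crossings, and the Jordan-curve crossing argument for the two outer chords. The cleanest approach is to cite the Jordan curve theorem, or to work with a polygonal or straight-line embedding, where faces are polygonal regions and arcs inside them are easy to construct. One could instead invoke F\'ary's theorem first, but that is unnecessary. The special case of the outer face is handled by the same argument after a stereographic projection to the sphere, where all faces play symmetric roles.
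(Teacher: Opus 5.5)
The paper does not prove this lemma; it only cites it as standard. Your argument is the usual textbook proof, and it is correct. An edge-maximal plane supergraph on the same vertex set must be connected and have no cut vertex, so its faces are bounded by cycles. A face of length $k\ge 4$ would then force the two chords $v_1v_3$ and $v_2v_4$ to lie outside the face, where they must cross.

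Two small points of bookkeeping.

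\begin{itemize}
\item The hypothesis $n\ge 3$ is really used when you pass from ``connected with no cut vertex'' to ``every face is bounded by a cycle''. For $n=2$ the maximal graph $K_2$ has no cut vertex, yet its single face is not bounded by a cycle. It is not needed to guarantee that $u$ and $w$ exist at a cut vertex; that is automatic once a cut vertex exists.
\item At the cut vertex $c$, the thing lying in different blocks should be the consecutive edges $cu$ and $cw$, not the vertices $u$ and $w$. That is exactly what the triangle $cuw$ would contradict if $uw$ were already an edge.
\end{itemize}
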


This standard lemma justifies reducing drawing questions for planar graphs to
triangulations: once a drawing has been found for the triangulated supergraph,
the added edges may be deleted.

\begin{lemma}[Euler degree bound]\label{lem:euler}
Every non-empty simple planar graph has a vertex of degree at most five.
\end{lemma}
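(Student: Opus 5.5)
The plan is to combine Euler's formula with a counting argument on edges, after first isolating the small cases. Let \(G\) be a non-empty simple planar graph with \(n\) vertices and \(m\) edges. If \(n\le 6\), every vertex has degree at most \(n-1\le 5\) and we are done. So the main case is \(n\ge 3\), and there we first establish the standard edge bound \(m\le 3n-6\).

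To prove the bound, we may assume \(G\) is connected and has at least one edge in each component. More simply, it suffices to prove the bound for any plane graph with \(n\ge 3\) vertices, and we do this by passing to a triangulated supergraph via Lemma~\ref{lem:triangulate}. Adding edges only increases \(m\), so it is enough to show \(m=3n-6\) for a plane triangulation on \(n\ge 3\) vertices.

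A triangulation is connected, so Euler's formula \(n-m+f=2\) applies. Every face is bounded by a triangle, so counting edge-face incidences gives \(2m=3f\), since each edge lies on exactly two faces. Substituting \(f=2m/3\) into Euler's formula yields \(m=3n-6\). Hence every simple planar graph with \(n\ge3\) vertices satisfies \(m\le 3n-6\).

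With the bound in hand, the handshake lemma finishes the argument: \(\sum_v \deg v = 2m \le 6n-12 < 6n\). The average degree is therefore strictly less than \(6\), so some vertex has degree at most \(5\).

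The only delicate point is the triangulation step. Euler's formula must be applied to a connected plane graph, and the face-edge incidence \(2m=3f\) requires every face, including the outer one, to be a genuine triangle with each edge on two distinct faces. This is exactly what simplicity and \(n\ge3\) guarantee for a triangulation. So relying on Lemma~\ref{lem:triangulate} handles this cleanly, and the cases \(n\le 2\) are covered by the trivial degree bound \(n-1\).
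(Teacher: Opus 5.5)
Your argument is correct: you extend to a triangulation via Lemma~\ref{lem:triangulate}, use Euler's formula with $2m=3f$ to get $m\le 3n-6$, and finish with the handshake lemma. The paper gives no proof of this lemma and cites it as a standard fact, so this standard argument is exactly what it relies on, and using Lemma~\ref{lem:triangulate} is not circular because that extension result does not depend on the degree bound.
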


\begin{lemma}[The polygon around a deleted vertex]\label{lem:deleted-vertex}
Let \(T\) be a plane triangulation and let \(v\) be a vertex of degree
\(d\). List the neighbors of \(v\) in cyclic order around \(v\) as
$v_1,v_2,\ldots,v_d$. Then \(v_1v_2\cdots v_dv_1\) is a cycle in \(T-v\), and this cycle bounds
the face created by deleting \(v\).
\end{lemma}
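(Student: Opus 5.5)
The plan is to read everything off the rotation system of the plane triangulation \(T\). I will assume \(T\) has at least four vertices, so that \(d\ge 3\) (every vertex of a triangulation on \(n\ge4\) vertices has degree at least three). The key fact to use is that every face of \(T\) is a triangle. The edges at \(v\) appear in the cyclic order \(vv_1,\dots,vv_d\) around \(v\). For each \(i\) (indices mod \(d\)), the consecutive edges \(vv_i\) and \(vv_{i+1}\) bound a common face \(f_i\) incident with \(v\). Since \(f_i\) is a triangle containing these two edges, its third edge must be \(v_iv_{i+1}\). So \(v_iv_{i+1}\in E(T)\), and hence \(v_iv_{i+1}\in E(T-v)\), for every \(i\). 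This gives the closed walk \(v_1v_2\cdots v_dv_1\) in \(T-v\).

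Next I must show the walk is a cycle, i.e.\ that the \(v_i\) are distinct. Because \(T\) is simple, the edges \(vv_i\) are pairwise distinct, so the neighbors \(v_i\) are pairwise distinct. For \(d\ge3\) the closed walk then uses \(d\) distinct vertices, and each consecutive pair is joined by an edge. That makes it a cycle; the edges \(v_iv_{i+1}\) are also distinct, since the unordered pairs \(\{v_i,v_{i+1}\}\) differ when \(d\ge3\).

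For the face statement, consider the closed region \(D=\bigcup_{i=1}^d \overline{f_i}\), which is the union of the triangular faces around \(v\). In a plane embedding the faces incident with \(v\) fill a small disk around \(v\), glued along the spokes \(vv_i\). So \(D\) is a closed disk whose boundary is the curve traced by the edges \(v_1v_2,\dots,v_dv_1\). Deleting \(v\) together with its \(d\) incident edges erases exactly the vertex \(v\) and the spokes in the interior of \(D\), so the interior of \(D\) becomes a single connected open region free of the drawing. I will then check that no other vertex or edge of \(T\) meets the interior of \(D\): the interior of each \(f_i\) is a face, and hence empty, and the only parts of the embedding interior to \(D\) are \(v\) and the open spokes. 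It follows that the interior of \(D\) is a face of \(T-v\), bounded by the cycle \(v_1\cdots v_dv_1\).

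I expect the main obstacle to be the topological step, that the union of the faces around \(v\) is a disk bounded precisely by the cycle. This needs the local structure of a plane embedding near a vertex, namely that a small neighborhood of \(v\) is cut by the spokes into sectors, each lying in one face. I would cite this standard fact about rotation systems rather than prove it from the Jordan curve theorem. The combinatorial part, that consecutive neighbors are adjacent and distinct, is routine once the faces are known to be triangles.
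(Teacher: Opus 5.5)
Your proposal is correct and follows the same route as the paper. Each face at \(v\) is the triangle \(vv_iv_{i+1}\), which gives the edges \(v_iv_{i+1}\), and deleting \(v\) merges these \(d\) triangular faces into a single face bounded by \(v_1v_2\cdots v_dv_1\); you additionally make explicit what the paper leaves implicit, namely the distinctness of the \(v_i\), the need for \(d\ge 3\) (at least four vertices), and the disk structure of the union of the faces around \(v\).
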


\begin{proof}
In a triangulation, each face incident with \(v\) is a triangle $vv_iv_{i+1}$,
with indices modulo \(d\). Thus \(v_i\) is adjacent to \(v_{i+1}\) for each
\(i\). Removing \(v\) merges the \(d\) triangular faces incident with \(v\)
into one face whose boundary is the cycle $v_1v_2\cdots v_dv_1$.
\end{proof}

\subsection{Rational-distance points}

A line is called rational if it is defined by a linear equation with rational
coefficients. A line segment has rational length if its Euclidean length lies
in \(\Q\).

\begin{definition}[Rational point and rational-distance point]
A point \(p=(x,y)\in\R^2\) is rational if \(x,y\in\Q\). Let
\(S=\{p_1,\ldots,p_d\}\subset\R^2\). A point \(q\in\R^2\) is a
rational-distance point for \(S\) if
\[ \|q-p_i\|\in\Q \qquad\text{for every }i=1,\ldots,d.\]
\end{definition}

\begin{lemma}[Three centers force rational coordinates]\label{lem:three-centers}
Let \(A,B,C\in\Q^2\) be non-collinear. If \(p\in\R^2\) satisfies
\[
\|p-A\|,\quad \|p-B\|,\quad \|p-C\|\in\Q,
\]
then \(p\in\Q^2\). In other words, if the three distances from \(p\) to
\(A,B,C\) are rational, then \(p\) has rational coordinates.
\end{lemma}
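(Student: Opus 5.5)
The plan is to convert the three rational distances into linear equations in the coordinates of \(p\) by subtracting squared distances. Write \(p=(x,y)\) and set \(r_A=\|p-A\|\), \(r_B=\|p-B\|\), \(r_C=\|p-C\|\), all in \(\Q\). Then \(r_A^2,r_B^2,r_C^2\in\Q\), and
\[
r_A^2=\|p\|^2-2\langle p,A\rangle+\|A\|^2,
\]
with the analogous identities for \(B\) and \(C\). The quadratic term \(\|p\|^2\) is common to all three. Subtracting the identity for \(A\) from those for \(B\) and \(C\) removes it and leaves
\[
2\langle p,\,B-A\rangle = r_A^2-r_B^2+\|B\|^2-\|A\|^2,\qquad
2\langle p,\,C-A\rangle = r_A^2-r_C^2+\|C\|^2-\|A\|^2 .
\]
Since \(A,B,C\in\Q^2\) and the \(r\)'s are rational, both right-hand sides are rational numbers.

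This is a \(2\times 2\) linear system in \((x,y)\). Its coefficient matrix has rows \(2(B-A)\) and \(2(C-A)\), so all its entries are rational. The determinant equals \(4\det(B-A,\,C-A)\), which is four times twice the signed area of triangle \(ABC\). Because \(A,B,C\) are non-collinear, this determinant is nonzero.

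By Cramer's rule, the unique solution \((x,y)\) is a ratio of determinants with rational entries, so \(x,y\in\Q\). The real point \(p\) satisfies the system, and the solution is unique, so \(p\) is that rational solution. Hence \(p\in\Q^2\).

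The only step requiring care is the invertibility of the system, and non-collinearity gives it directly. Only the rationality of the squared distances is used, so the argument in fact proves the slightly stronger statement that \(\|p-A\|^2,\|p-B\|^2,\|p-C\|^2\in\Q\) already forces \(p\in\Q^2\).
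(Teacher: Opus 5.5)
Your proof is correct. Subtracting the squared-distance identities eliminates $\|p\|^2$ and leaves a $2\times 2$ linear system with rational coefficients and rational right-hand sides. Its determinant $4\det(B-A,C-A)$ is nonzero exactly because $A,B,C$ are non-collinear, so Cramer's rule forces $p\in\Q^2$.

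The paper states this lemma without proof, treating it as standard, so there is no argument of its own to compare with. Your argument is the standard one and fills that gap. Your remark that rationality of the three squared distances already suffices is also right.

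The lemma, in either form, is what lets one pass from the dense rational-distance points of Theorem~\ref{thm:berry} to the rational point $p\in\Q^2$ asserted in Corollary~\ref{cor:triangle}.
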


\begin{lemma}[Integer and rational formulations]\label{lem:int-rat-local}
Suppose a local insertion theorem is proved for every polygon with integer
coordinates and integer side lengths. Then the corresponding theorem also
holds for every polygon with rational coordinates and rational side lengths,
provided the statement is invariant under scaling.
\end{lemma}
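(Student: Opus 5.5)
The plan is a scaling argument: clear denominators, apply the integer version, and scale back. Let \(P=p_1p_2\cdots p_d\) be a polygon with \(p_i\in\Q^2\) and rational side lengths \(\|p_i-p_{i+1}\|\in\Q\). If the local insertion theorem also refers to further rational data, such as an interior diagonal of rational length, include those quantities too. Choose a positive integer \(M\) divisible by the denominators of all coordinates of all \(p_i\) and of all these lengths, exactly as in Lemma~\ref{lem:scaling}. Then the scaled polygon \(MP=(Mp_1)\cdots(Mp_d)\) has integer coordinates and integer side lengths, since \(\|Mp_i-Mp_j\|=M\|p_i-p_j\|\).

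The hypothesis applies to \(MP\). It produces the conclusion for \(MP\): typically a point \(q'\) inside \(MP\), possibly lying on a prescribed integer diagonal, with \(\|q'-Mp_i\|\in\Q\) for all \(i\), and possibly \(q'\in\Q^2\). Set \(q=q'/M\). Then \(\|q-p_i\|=\|q'-Mp_i\|/M\in\Q\), and rationality of coordinates is preserved because \(M\in\Q\). If only rational distances are guaranteed, rationality of \(q\) still follows from Lemma~\ref{lem:three-centers} whenever \(d\ge3\) and the \(p_i\) are not all collinear.

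The geometric side conditions transfer because the homothety \(x\mapsto x/M\) is an affine bijection of \(\R^2\) that sends \(MP\) to \(P\). It preserves interiors, segments, diagonals, incidence, convexity and non-degeneracy, and multiplies all distances by the positive rational \(1/M\). This is precisely what the proviso ``invariant under scaling'' means, and I would state it formally as follows: the property \(\Phi(P,q)\) holds if and only if \(\Phi(\lambda P,\lambda q)\) holds for every \(\lambda\in\Q_{>0}\).

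There is no real technical obstacle. The only care needed is in the bookkeeping that every rational quantity entering the hypotheses of the local theorem, not only the coordinates, becomes integral after multiplying by \(M\). Otherwise the scaled polygon might not satisfy the integer hypothesis. Taking \(M\) to be the lcm of all the finitely many relevant denominators handles this.
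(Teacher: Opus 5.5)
Your proposal is correct and is the denominator-clearing argument the paper intends: the paper gives no separate proof of this lemma and relies on the same scaling by a common denominator \(M\) as in Lemma~\ref{lem:scaling}. Two things in your version make precise what the paper leaves implicit: you include auxiliary rational data such as the diagonal length among the denominators, and you formalize ``invariant under scaling'' as \(\Phi(P,q)\Leftrightarrow\Phi(\lambda P,\lambda q)\) for \(\lambda\in\Q_{>0}\).
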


\subsection{Polygons}

Let \(n\geq 3\), and let $P=v_1v_2\cdots v_n v_1$ be a closed polygonal chain in \(\mathbb{R}^2\). Indices are taken
cyclically modulo \(n\), so that \(v_{n+1}=v_1\). The sides of \(P\)
are the closed line segments $e_i=[v_i,v_{i+1}],\qquad i=1,\dots,n.$
Two sides \(e_i\) and \(e_j\) are called adjacent if they share an
endpoint in the cyclic order, that is, if \(j=i+1\), \(i=j+1\), or
\(\{i,j\}=\{1,n\}\). Otherwise they are called non-adjacent.

\begin{definition}[Simple polygon]
The polygonal chain \(P=v_1v_2\cdots v_n v_1\) is a \emph{simple
polygon} if its boundary has no self-intersections. Equivalently: the vertices \(v_1,\dots,v_n\) are distinct; every side has positive length, i.e. \(v_i\neq v_{i+1}\) for every \(i\); two adjacent sides intersect only in their common endpoint; two non-adjacent sides are disjoint.
Thus a simple polygon may be convex or concave, but it has no repeated
vertices, no crossing sides, no overlapping sides, and no vertex lying in
the relative interior of a non-incident side.
\end{definition}

For a simple polygon \(P\), we also write \(P\) for the compact polygonal
region consisting of its boundary together with its bounded interior.

\begin{definition}[Non-degenerate polygon]
A polygon \(P=v_1v_2\cdots v_n v_1\) is \emph{non-degenerate} if it is
simple and has no flat boundary angle. Equivalently, \(P\) is simple and
no three consecutive vertices are collinear: \( v_{i-1},\ v_i,\ v_{i+1} \quad\text{are not collinear for every } i\). In this convention, every interior angle of \(P\) is strictly between
\(0\) and \(2\pi\), and no interior angle is equal to \(\pi\).
\end{definition}

\subsection{Triangles}

\begin{theorem}[Almering \cite{Almering1963}, Berry \cite{Berry1992}]\label{thm:berry}
Let \(A,B,C\in\R^2\) be non-collinear. Suppose that
\[
\|A-B\|\in\Q,\qquad
\|A-C\|^2\in\Q,\qquad
\|B-C\|^2\in\Q.
\]
Then the points whose distances from \(A,B,C\) are all rational are dense in
\(\R^2\).
\end{theorem}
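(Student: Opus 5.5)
The plan is to reduce to the statement that, along a dense set of rays emanating from \(A\), the rational-distance points on each ray form a dense set. Write \(c=\|A-B\|\in\Q\). A similarity with rational scale factor \(1/c\), composed with a rigid motion, preserves the rationality of distances and of squared distances, and it maps dense sets to dense sets. I would therefore normalize to \(A=(0,0)\) and \(B=(1,0)\). Put \(C=(u,v)\) with \(a=u^2+v^2\in\Q\) and \(b=(u-1)^2+v^2\in\Q\). Then \(u=(a-b+1)/2\in\Q\) and \(v^2=q\in\Q\), where \(q>0\) is fixed but \(v\) itself may be irrational.

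Next I would parametrize candidate points as \(P=r(\cos\theta,\sin\theta)\) with \(r=\|P-A\|\in\Q\), so that
\[
\|P-B\|^2=r^2-2r\cos\theta+1,\qquad \|P-C\|^2=r^2-2ru\cos\theta-2rv\sin\theta+a .
\]
For both expressions to be rational for many rational \(r\), I need \(\cos\theta\in\Q\) and \(v\sin\theta\in\Q\), that is, \(\sin\theta=w\sqrt q/q\) with \(w\in\Q\). The pair \((\cos\theta,w)\) then lies on the conic \(X^2+W^2/q=1\). This conic has the rational point \((1,0)\), so it admits a rational parametrization, and the admissible directions \(\theta\) are dense in the circle. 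For each such \(\theta\), write \(\alpha=\cos\theta\in\Q\) and \(\beta=v\sin\theta\in\Q\). The problem becomes finding rational \(r,s,t\) with
\[
s^2=r^2-2\alpha r+1,\qquad t^2=r^2-2(u\alpha+\beta)r+a .
\]
This is an intersection of two quadrics in \(\mathbb{P}^3\), hence generically a genus-one curve \(E_\theta\). It has the evident rational points \(r=0\), \(s=\pm1\), \(t=\pm\sqrt a\) only when \(a\) is a square. More reliably, it has the points at infinity \((r:s:t:1)=(1:\pm1:\pm1:0)\), which are rational, so \(E_\theta\) is an elliptic curve over \(\Q\).

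The main step is to show that \(E_\theta(\Q)\) is dense in \(E_\theta(\R)\), or at least in a component whose projection to the \(r\)-line covers the relevant interval. Since the dense set of directions already handles the angular coordinate, this gives density of the rational \(r\) along each such ray and hence density in \(\R^2\). I would try first to exhibit explicitly the differences of the rational points at infinity. I would then argue, by specializing \(\theta\) (equivalently, the parameter of the conic) and using a Nagell--Lutz or Mazur torsion bound, that one such difference has infinite order for all but finitely many parameter values. Silverman's specialization theorem applied to the curve over \(\Q(\text{parameter})\) would reduce this to showing the point is non-torsion generically, which I would check at a single convenient specialization.

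Infinite order makes the closure of the multiples an infinite closed subgroup of the real Lie group \(E_\theta(\R)\), so it contains the identity component. If \(E_\theta(\R)\) has two components, adding a fixed rational point on the other component transfers density there too, since the points at infinity lie on both sign-branches. I expect the genuinely hard part to be the uniform non-torsion claim across the whole family. The fallback is to discard the finitely many bad parameters, which does not affect density of directions.
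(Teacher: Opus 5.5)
The paper gives no proof of this statement: it is quoted from Almering and Berry and used only through Corollary~\ref{cor:triangle}, so your outline has to stand on its own.

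\textbf{What works.} The framework is sound. This includes the normalization, \(u\in\Q\) and \(v^2=q\in\Q\), the dense set of admissible directions from the conic through \((1,0)\), and the genus-one curve \(E_\theta\) with its rational points at infinity. The passage from one point of infinite order to density of the \(r\)-values, via the real Lie group, is also fine.

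\textbf{The gap.} The step that carries all the arithmetic is missing. You never show that some point of \(E_\theta(\Q)\) has infinite order, and the route you announce cannot give it for every triangle. The data \(u\in\Q\), \(q\in\Q_{>0}\) are arbitrary, so ``check non-torsion at one convenient specialization'' is a computation for one triangle at a time. If you also make \(u,q\) indeterminates, a single check only proves non-torsion off a Zariski-closed set of triangles. That set may contain infinitely many rational \((u,q)\), and for those the fallback ``discard finitely many bad parameters'' has no justification.

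Two of your candidate differences are also useless. Write \(\infty_{\pm\pm}=(1:\pm1:\pm1:0)\). The map \((s,t)\mapsto(-s,-t)\) is a fixed-point-free involution of the smooth curve, hence a translation by a \(2\)-torsion point. So \(\infty_{++}-\infty_{--}\) and \(\infty_{+-}-\infty_{-+}\) are always \(2\)-torsion.

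\textbf{A uniform fix by degeneration.} Put \(\gamma=u\alpha+\beta\), \(\delta_1=1-\alpha^2\), \(\delta_2=a-\gamma^2\) (the squared distances of \(B\) and \(C\) from the line), and \(e=\gamma-\alpha\). Parametrize \(s^2=(r-\alpha)^2+\delta_1\) and complete the square in the resulting quartic. This brings \(E_\theta\) to
\[
z^2=X\bigl(X^2+(e^2+\delta_1+\delta_2)X+\delta_1\delta_2\bigr),
\]
with one point at infinity as origin. The other three go to \((0,0)\) and, up to a common sign, to \(P=(-\delta_2,e\delta_2)\) and \(P+(0,0)=(-\delta_1,-e\delta_1)\).

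View this over the function field \(\Q(m)\) of the conic. Reduce at the place of the line \(AB\), where \(\alpha=1\), \(\beta=0\), so \(\delta_1=0\), \(\delta_2=q\), \(e=u-1\). The fiber is the nodal cubic \(z^2=X^2(X+b)\) with \(b=BC^2\), and \(P\) reduces to the nonsingular point \((-q,(u-1)q)\). The map \((X,z)\mapsto(z-\sqrt b\,X)/(z+\sqrt b\,X)\) sends it to the real number \((u-1+\sqrt b)/(u-1-\sqrt b)\), which is not a root of unity unless \(u=1\).

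If \(u=1\) (right angle at \(B\)), reduce instead at the place of the line \(AC\). It is defined over \(\Q(\sqrt a)\), and there \(\delta_2=0\) and \(b=e^2+\delta_1\). At that place \(P+(0,0)\) maps to \((e-\sqrt b)/(e+\sqrt b)\), with \(e=\pm(a-u)/\sqrt a\neq0\).

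Reduction is a homomorphism on points with nonsingular reduction, so \(P\) has infinite order over \(\overline{\Q}(m)\). By Mazur's bound, outside finitely many bad fibers \(P_m\) can be torsion only at poles of \(X(nP)\) for some \(n\le 12\). That is exactly the finiteness your fallback needs.

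\textbf{The rational case.} When \(C\in\Q^2\), which is the case used in Corollary~\ref{cor:triangle}, \(\delta_1\) and \(\delta_2\) are rational squares. The substitution \(X=\delta_2x\) turns your curve into Love's \(E_{r,s}\) with \(r=e/\sqrt{\delta_2}\), \(s=\sqrt{\delta_1/\delta_2}\), and sends \(P\) to \((-1,r)\). Its exceptional conditions hold for only finitely many directions, so Theorem~\ref{thm:love} already gives the non-torsion there.
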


Almering \cite{Almering1963} proved the classical rational-triangle case.
Berry \cite{Berry1992} proved the version stated above. In this paper, we
only need the following consequence: every triangle with rational coordinates and
rational side lengths contains an interior rational point whose distances to
the three vertices are rational.

\begin{corollary}[Triangle insertion]\label{cor:triangle}
Let \(\Delta ABC\) be a non-degenerate triangle with
\[
A,B,C\in\Q^2,
\qquad
\|A-B\|,\|B-C\|,\|C-A\|\in\Q.
\]
Then the interior of \(\Delta ABC\) contains a rational point \(p\in\Q^2\)
such that
\[
\|p-A\|,\quad \|p-B\|,\quad \|p-C\|\in\Q.
\]
\end{corollary}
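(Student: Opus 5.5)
The plan is to derive the corollary directly from Theorem~\ref{thm:berry} together with Lemma~\ref{lem:three-centers}. First I check the hypotheses of Theorem~\ref{thm:berry}. The points \(A,B,C\) are non-collinear because the triangle is non-degenerate. We have \(\|A-B\|\in\Q\) by assumption. Since \(\|A-C\|\in\Q\) and \(\|B-C\|\in\Q\), their squares are rational too. (Even without the length hypotheses, the squared distances are rational because \(A,B,C\in\Q^2\).) Theorem~\ref{thm:berry} then says that the set
\[
\mathcal{D}=\{q\in\R^2 : \|q-A\|,\ \|q-B\|,\ \|q-C\|\in\Q\}
\]
is dense in \(\R^2\).

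Next I use that the interior of \(\Delta ABC\) is a non-empty open subset of \(\R^2\), since the triangle is non-degenerate. A dense set meets every non-empty open set, so there is a point \(p\in\mathcal{D}\) lying in the interior of \(\Delta ABC\). This \(p\) has rational distances to all three vertices.

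Finally I apply Lemma~\ref{lem:three-centers}. The centers \(A,B,C\) are rational and non-collinear, and the three distances from \(p\) to them are rational, so \(p\in\Q^2\). This gives exactly the conclusion of the corollary.

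There is no genuine obstacle: all of the Diophantine content sits inside Theorem~\ref{thm:berry}. The only points needing care are small ones. One is confirming that "dense" gives a point in the open interior rather than merely in the closed triangle, which holds because the interior is open and non-empty. The other is that rationality of the coordinates is not provided by the density statement itself; it has to come from Lemma~\ref{lem:three-centers}. If that lemma were unavailable, I would prove it directly. Subtracting the circle equations \(\|p-A\|^2=a^2\), \(\|p-B\|^2=b^2\) and \(\|p-A\|^2=a^2\), \(\|p-C\|^2=c^2\) gives two linear equations in \(p\) with rational coefficients. Their coefficient matrix has rows \(B-A\) and \(C-A\), which are linearly independent by non-collinearity. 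So Cramer's rule yields \(p\in\Q^2\).
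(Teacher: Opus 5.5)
Your proof is correct and follows the route the paper intends: the paper presents the corollary as an immediate consequence of Theorem~\ref{thm:berry}, where density meets the non-empty open interior and Lemma~\ref{lem:three-centers} then forces rational coordinates. Your Cramer's-rule argument for Lemma~\ref{lem:three-centers} is also correct, and it fills in a step the paper states without proof.
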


\subsection{Quadrilaterals}

A quadrilateral is an ordered four-cycle with straight sides.
It is \emph{simple} if its boundary is a simple closed polygonal curve. A
simple quadrilateral is \emph{convex} if the union of its boundary and
bounded interior is a convex subset of \(\R^2\). It is \emph{concave} if it
is simple but not convex; equivalently, one of its interior angles is reflex.
A simple quadrilateral is called non-degenerate if no three of its vertices
are collinear.

\begin{definition}[Interior diagonal]
Let \(Q\) be a simple quadrilateral. A diagonal is a segment joining
two non-adjacent vertices. A diagonal is an interior diagonal if its relative
interior is contained in the interior of \(Q\).
\end{definition}

\begin{definition}[Diagonal rational-distance sets]
Let \(U,V,A,B\in\Q^2\), and suppose that \(UV\) is the chosen diagonal.
We write
\[
\mathcal R^\circ(U,V;A,B)
\]
for the set of rational points \(P\) on the open segment \(UV\) such that $PU, PV, PA, PB$ 
are all rational. We write \(\mathcal R(U,V;A,B)\) for the analogous set on the closed segment \(UV\).
\end{definition}

\begin{lemma}[An interior diagonal sees the quadrilateral]\label{lem:quad-diagonal}
Let \(Q=ABCD\) be a simple quadrilateral, and suppose that \(AC\) is an
interior diagonal. If \(p\) lies on the open segment \(AC\), then the
segments from \(p\) to \(A,B,C,D\) are contained in \(Q\). Thus placing a
new vertex at \(p\) and joining it to all four vertices creates no crossings
inside \(Q\).
\end{lemma}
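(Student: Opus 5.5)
The plan is to split the quadrilateral along the interior diagonal into two triangles. Write \(Q=ABCD\) and let \(p\) lie on the open segment \(AC\). Since \(AC\) is an interior diagonal, its relative interior lies in the interior of \(Q\). The segment \(AC\) therefore splits \(Q\) into the two closed triangles \(\Delta ABC\) and \(\Delta ACD\), with \(Q=\Delta ABC\cup\Delta ACD\). Both triangles are non-degenerate, because \(B\) and \(D\) are not on the line \(AC\); otherwise three vertices of \(Q\) would be collinear, or the diagonal would meet the boundary in its relative interior.

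To justify the decomposition, I would first note that for a simple quadrilateral the points \(B\) and \(D\) lie on opposite sides of the line \(AC\) whenever \(AC\) is interior. If they lay on the same side, then one of the triangles \(ABC\), \(ADC\) would contain the other vertex. In that case the open segment \(AC\) would lie on the boundary of the region, or outside it, near that vertex. This contradicts \(AC\) being interior.

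Next, since \(p\in[A,C]\subset\Delta ABC\) and triangles are convex, the segments \([p,A]\), \([p,B]\), \([p,C]\) all lie in \(\Delta ABC\subseteq Q\). In the same way, \([p,D]\) lies in \(\Delta ACD\subseteq Q\).

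For the no-crossing claim, I would check that each new segment \([p,X]\) with \(X\in\{A,B,C,D\}\) meets \(\partial Q\) only at \(X\).
\begin{itemize}
\item For \(X=B\): every point of \([p,B)\) is a convex combination giving positive weight to \(p\), which lies in the relative interior of edge \(AC\) of \(\Delta ABC\). So \([p,B)\) lies in the interior of \(\Delta ABC\), except possibly \(p\) itself, which lies on the open diagonal and hence in the interior of \(Q\). Thus \([p,B)\) avoids \(\partial Q\).
\item For \(X=D\): the same argument works with \(\Delta ACD\).
\item For \(X=A\) and \(X=C\): the half-open pieces \([p,A)\) and \([p,C)\) lie on the open diagonal, which is inside the interior of \(Q\) by hypothesis.
\end{itemize}
The four new segments meet one another only at \(p\), because they lie on distinct rays from \(p\). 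The rays to \(A\) and \(C\) are opposite, and the rays to \(B\) and \(D\) point into opposite open half-planes of the line \(AC\).

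The main obstacle is the first step: making rigorous that an interior diagonal of a simple quadrilateral separates it into two triangles with \(B\) and \(D\) on opposite sides of \(AC\). I would handle it by the elementary case analysis above, using the definition of interior diagonal together with simplicity of \(Q\). The remaining steps are routine convexity arguments.
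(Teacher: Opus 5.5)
Your proposal is correct and follows the same route as the paper. Both cut \(Q\) along the interior diagonal \(AC\) into \(\Delta ABC\) and \(\Delta ACD\), then use convexity of each triangle to place \([p,A],[p,B],[p,C]\) and \([p,A],[p,C],[p,D]\) inside \(Q\). The paper simply asserts the decomposition; your extra justification (that \(B\) and \(D\) lie on opposite sides of \(AC\), and that the new segments meet \(\partial Q\) only at their far endpoints) is added rigor, not a different argument.
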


\begin{proof}
The diagonal \(AC\) splits the simple quadrilateral into the two triangles
\(\Delta ABC\) and \(\Delta ACD\). If \(p\in AC\), then
\[
[p,A]\cup[p,B]\cup[p,C]\subset \Delta ABC
\]
and
\[
[p,A]\cup[p,C]\cup[p,D]\subset \Delta ACD.
\]
Both triangles lie in \(Q\), so all four segments from \(p\) to the vertices
of \(Q\) lie in \(Q\).
\end{proof}

\begin{conjecture}[Quadrilateral diagonal insertion]\label{conj:quad}
Let \(Q=ABCD\) be a simple quadrilateral with
\[
A,B,C,D\in\Z^2,
\qquad
AB,BC,CD,DA\in\Z,
\]
and suppose that \(Q\) has an interior diagonal of integer length. Then the
open segment of that diagonal contains a rational point \(x\in\Q^2\) such
that
\[
\|x-A\|,\quad \|x-B\|,\quad \|x-C\|,\quad \|x-D\|\in\Q.
\]
\end{conjecture}

By Lemma~\ref{lem:int-rat-local}, the same conjecture may equivalently be
used in rational-coordinate, rational-length form. This is the form needed
inside the induction, because the intermediate drawing before the final
scaling is rational rather than necessarily integral.

\subsection{Pentagons}

For pentagons, being merely on the interior diagonal or inside the polygon is not enough. The new
vertex must see all five boundary vertices. This is a place where the kernel is needed.

\begin{definition}[Kernel of a pentagon]
Let \(P=A_1A_2A_3A_4A_5\) be a simple pentagon. The kernel of \(P\), denoted
\(\kerp(P)\), is the set of points \(q\in P\) such that each segment
$[q,A_i],\ i=1,\ldots,5$ is contained in \(P\).
\end{definition}

\begin{theorem}[Pentagons are star-shaped]\label{thm:pentagon-star}
Every simple pentagon has non-empty kernel.
\end{theorem}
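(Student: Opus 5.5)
The kernel is an intersection of half-planes. Each side \(e_i=[A_i,A_{i+1}]\) of \(P\) determines a closed half-plane \(H_i\) bounded by the line through \(e_i\) and containing the points of \(P\) immediately on the interior side of \(e_i\). A point \(q\) in the bounded region \(P\) lies in \(\kerp(P)\) if and only if \(q\in\bigcap_i H_i\). Indeed, if \(q\) lies in every \(H_i\), then \(P\cap\bigcap_i H_i\) is convex and contains every vertex \(A_i\) (a vertex of \(P\) belongs to every such half-plane restricted to the region near it). Hence every segment \([q,A_i]\) stays in \(P\). I will in fact use only the special structure of a pentagon rather than this general characterization, so I plan a direct case analysis on the number of reflex vertices.

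\textbf{Step 1: counting reflex vertices.} The interior angles of a simple pentagon sum to \(3\pi\), and each angle is strictly less than \(2\pi\). So at most two angles are reflex: three reflex angles would already exceed \(3\pi\). Moreover, the two reflex vertices cannot both be adjacent to a common configuration that forces the remaining three angles to be too small. I will just use the bound ``at most two''.

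\textbf{Step 2: the cases.} If \(P\) is convex, every point of \(P\) is in the kernel.

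If there is exactly one reflex vertex \(A_1\), I claim \(A_1\) itself, or points near it, see everything. The cleanest route is triangulation. The reflex vertex \(A_1\) sees both non-adjacent vertices \(A_3,A_4\): the diagonals \(A_1A_3\) and \(A_1A_4\) are interior. This holds because a simple polygon always has an interior diagonal from a reflex vertex, and in a pentagon with only one reflex vertex the remaining pieces are a triangle and a convex quadrilateral. So \(P\) is a fan of triangles \(A_1A_2A_3\), \(A_1A_3A_4\), \(A_1A_4A_5\), and every point of this fan near \(A_1\) sees all five vertices, as in Lemma~\ref{lem:quad-diagonal}.

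If there are two reflex vertices, they are either adjacent or separated by one vertex, up to relabeling. In each subcase, I find a vertex \(A_j\) (the vertex opposite the reflex pair) from which both diagonals are interior. Then \(P\) is a fan from \(A_j\), and \(A_j\) lies in the kernel.

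\textbf{Step 3: fan implies kernel.} In every case I will show that \(P\) is the union of three triangles sharing a common vertex \(A_j\), with interior diagonals \(A_jA_{j+2}\) and \(A_jA_{j+3}\). Each segment \([A_j,A_i]\) is either a side or one of these diagonals, so it lies in \(P\), and \(A_j\in\kerp(P)\).

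The main obstacle is proving the existence of such a fan vertex in the two-reflex case without a messy coordinate case split. My first attempt is this: triangulate \(P\) by two non-crossing diagonals, which is always possible for a simple polygon. In a pentagon, any two non-crossing diagonals share an endpoint, since disjoint diagonals would need at least six vertices. So every triangulation of a pentagon is automatically a fan from some vertex \(A_j\), and Step 3 applies directly. This combinatorial observation replaces the case analysis of Step 2 entirely. The remaining fact needed is the standard one that every simple polygon with at least four vertices has an interior diagonal, which by induction gives a full triangulation.
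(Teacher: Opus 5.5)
Your final argument (the last paragraph together with Step~3) is correct, and it takes a different route from the paper. The paper's proof is a one-line appeal to Chv\'atal's art gallery theorem: for $n=5$ one has $\lfloor 5/3\rfloor=1$ guard, and a point guarding all of $P$ lies in $\kerp(P)$. You instead triangulate $P$ by two interior diagonals and observe that in a pentagon they must share an endpoint $A_j$. Then $P$ is the fan of triangles $A_jA_{j+1}A_{j+2}$, $A_jA_{j+2}A_{j+3}$, $A_jA_{j+3}A_{j+4}$, and each $[A_j,A_i]$ is a side or a fan diagonal. This is essentially Fisk's triangulation proof of Chv\'atal's theorem specialized to $n=5$. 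What it buys:
\begin{itemize}
\item the only external input is the standard fact that a simple polygon with at least four vertices has an interior diagonal;
\item it produces an explicit kernel point, namely a vertex of $P$;
\item it gives more than the paper's vertex-visibility definition asks, since each fan triangle is convex and contains $A_j$, so $A_j$ sees every point of $P$.
\end{itemize}
Because the paper's $\kerp(P)$ allows boundary points, $A_j$ is a legitimate witness. The paper's route is shorter but outsources all the geometry to the cited theorem.

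Two repairs are needed. First, your reason that two non-crossing diagonals share an endpoint (disjoint ones would need six vertices) is a statement about non-interleaving chords. The geometric fact that interleaved interior diagonals must cross is left implicit. You can avoid it by building the triangulation inductively. Every diagonal of a pentagon has the form $A_iA_{i+2}$, and it cuts $P$ into the triangle $A_iA_{i+1}A_{i+2}$ and the simple quadrilateral $A_iA_{i+2}A_{i+3}A_{i+4}$. That quadrilateral's interior diagonal is $A_iA_{i+3}$ or $A_{i+2}A_{i+4}$, and both share an endpoint with $A_iA_{i+2}$.

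Second, delete the opening half-plane paragraph and Step~2. They are unused and contain false statements:
\begin{itemize}
\item $\bigcap_i H_i$ need not contain the vertices: at a reflex vertex $A_k$, the vertex $A_{k+1}$ lies strictly outside the inner half-plane of $A_{k-1}A_k$.
\item With a single reflex vertex, the quadrilateral left after cutting along a diagonal from that vertex can itself be concave there. So the claimed ``convex quadrilateral'' is wrong, although the conclusion happens to survive.
\item The two-reflex case is only asserted, not proved.
\end{itemize}
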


\begin{proof}
This is the \(n=5\) case of the art gallery theorem of Chvatal: every simple
polygon with \(n\) vertices can be guarded by \(\lfloor n/3\rfloor\) points
\cite{Chvatal1975}. For \(n=5\), one point is enough. A single guard point
is precisely a point from which the whole polygon is visible, hence a point
of the kernel.
\end{proof}

\begin{conjecture}[Pentagon kernel insertion]\label{conj:pentagon}
Let \(P=A_1A_2A_3A_4A_5\) be a simple pentagon with
\[
A_i\in\Z^2\quad (i=1,\ldots,5),
\qquad
A_iA_{i+1}\in\Z\quad (i=1,\ldots,5),
\]
where indices are taken modulo \(5\). Then \(\kerp(P)\) contains a rational
point \(x\in\Q^2\) such that
\[
\|x-A_i\|\in\Q
\qquad (i=1,\ldots,5).
\]
\end{conjecture}

(Diagonal version) A useful sufficient variant is the following: If one can prove that every
integer-coordinate, integer-sided pentagon contains such a point on an
interior diagonal segment contained in \(\kerp(P)\), then
Conjecture~\ref{conj:pentagon} follows for the purposes of the induction.
The condition that the segment lie in the kernel is essential: a point on an
arbitrary interior diagonal need not see all five vertices.

As in the quadrilateral case, Lemma~\ref{lem:int-rat-local} converts the
integer statement into the rational statement needed during the induction.

\subsection{Kemnitz--Harborth induction}

\begin{theorem}\label{thm:conditional}
Assume Conjecture~\ref{conj:quad} and Conjecture~\ref{conj:pentagon}. Then
every planar graph has an integral drawing.
\end{theorem}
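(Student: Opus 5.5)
The proof is by induction on the number of vertices, and it produces \emph{rational} drawings throughout. By Lemma~\ref{lem:scaling} a rational drawing can be turned into an integral drawing at the very end, and by Lemma~\ref{lem:int-rat-local} Conjectures~\ref{conj:quad} and~\ref{conj:pentagon} may be used in their rational form. By Lemma~\ref{lem:triangulate} it suffices to treat plane triangulations \(T\): deleting the added edges from a drawing of the triangulated supergraph gives a drawing of the original graph. For \(|V(T)|\le 3\) the graph is a subgraph of a triangle, and a triangle has a rational drawing, for instance with vertices \((0,0),(3,0),(0,4)\).

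For the inductive step we need a vertex \(v\) of degree \(d\le 5\) that is \emph{not} on the outer triangle. The reason is that the face created by deleting \(v\) must be a bounded face, so that the new point can be placed inside it. Existence follows from a refinement of Lemma~\ref{lem:euler}. If \(|V(T)|=n\ge 4\), the degree sum is \(6n-12\), and each outer vertex has degree at least \(3\). If every one of the \(n-3\) inner vertices had degree at least \(6\), the degree sum would be at least \(6(n-3)+9=6n-9\), which is a contradiction.

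Let \(v_1,\dots,v_d\) be the neighbours of \(v\) in cyclic order. By Lemma~\ref{lem:deleted-vertex} they bound the face \(F\) of \(T-v\) created by deleting \(v\). We pass to a smaller plane triangulation \(T'\) as follows.
\begin{itemize}
\item If \(d=3\), let \(T'=T-v\).
\item If \(d=4\), let \(T'=T-v+v_1v_3\), or \(T'=T-v+v_2v_4\) if \(v_1v_3\) is already an edge of \(T\). Both chords cannot already be present: they would be two curves outside \(F\) joining alternating vertices of the boundary of \(F\), and by the Jordan curve theorem such curves must cross.
\item If \(d=5\), let \(T'=T-v\) with \(F\) left as a pentagonal face.
\end{itemize}
By induction \(T'\) has a rational plane drawing \(\varphi\).

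Under \(\varphi\), the face \(F\) is drawn as a simple polygon with rational vertices and rational side lengths, since every side is an edge of \(T'\). When \(d=4\), the added chord is an edge of \(T'\), so it has rational length. It is drawn strictly inside the drawn quadrilateral, because the two triangular faces of \(T'\) on either side of it fill up \(F\); hence it is an interior diagonal of rational length. We then choose the new point \(p\) as follows.
\begin{itemize}
\item For \(d=3\), take \(p\) from Corollary~\ref{cor:triangle}.
\item For \(d=4\), take \(p\) on the open interior diagonal using the rational form of Conjecture~\ref{conj:quad}. Lemma~\ref{lem:quad-diagonal} guarantees that the segments from \(p\) to \(v_1,\dots,v_4\) stay in the quadrilateral. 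After placing \(p\) we delete the auxiliary chord if it is not an edge of \(T\).
\item For \(d=5\), take \(p\in\kerp(F)\) from the rational form of Conjecture~\ref{conj:pentagon}.
\end{itemize}
If the conjecture provided only rational distances, Lemma~\ref{lem:three-centers} would still make \(p\) a rational point, since \(F\) has three non-collinear vertices. Setting \(\varphi(v)=p\) gives a rational drawing of \(T\). It is plane because all new segments lie in the closed face \(F\), meet its boundary only at their endpoints \(v_i\), and meet each other only at \(p\). A final application of Lemma~\ref{lem:scaling} gives an integral drawing.

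The main obstacle is the pentagon step. The kernel in Conjecture~\ref{conj:pentagon} is a closed set and may contain boundary points of \(F\). A point on a side of \(F\) would put \(v\) on an existing edge, and a point on the boundary could even coincide with a vertex \(v_i\). I would address this in one of two ways. The first is to read the conjecture as producing a point in the open kernel, which is non-empty whenever the kernel contains an interior point. The second is to argue that, since rational-distance points are typically dense along a curve, one can move \(p\) off \(\partial F\) while staying in \(\kerp(F)\). If neither works, the statement needs this strengthening made explicit. A smaller point to check is that the segments \([p,v_i]\) meet \(\partial F\) only at \(v_i\). In the quadrilateral case this follows from Lemma~\ref{lem:quad-diagonal} together with \(p\) lying in the open diagonal. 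In the pentagon case it requires \(p\) to be in the interior of the kernel, so that no segment \([p,v_i]\) grazes a reflex vertex.
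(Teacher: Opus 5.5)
Your proof follows the paper's: induction over plane triangulations with rational drawings, deletion of a vertex of degree at most five, then Corollary~\ref{cor:triangle}, the rational form of Conjecture~\ref{conj:quad} on an auxiliary chord, or Conjecture~\ref{conj:pentagon} on the kernel, and a final scaling.

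Two of your additions repair omissions in the paper's write-up.
\begin{itemize}
\item The paper takes the vertex from Lemma~\ref{lem:euler} without checking that it avoids the outer triangle. Since the induction preserves the embedding, deleting an outer vertex creates the unbounded face, and none of the local statements then applies. Your count \(6(n-3)+9>6n-12\) fixes this.
\item The paper adds the chord for \(d=4\) without checking that it is not already an edge. Your Jordan-curve argument settles this.
\end{itemize}
You should also state explicitly that the inductive drawing realizes the given embedding with the same outer face. Your whole argument depends on \(F\) being drawn as a bounded face.

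There is one slip. For \(d=5\) you take \(T'=T-v\), which is not a triangulation, so your induction hypothesis does not apply to it. The paper's fix is to add two non-crossing chords of \(F\) from a common vertex, draw the result, and then delete the chords. Lemma~\ref{lem:triangulate} guarantees this can be done without duplicating an edge. Alternatively, run the induction over all plane graphs with a fixed embedding and outer face.

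The kernel problem you raise is real, and the paper's proof does not address it. The paper says that placing \(v\) at the point from Conjecture~\ref{conj:pentagon} ``creates no crossings by the definition of the kernel''. But \(\kerp(P)\) is closed:
\begin{itemize}
\item for a convex pentagon it is all of \(P\), boundary included;
\item for a concave pentagon it may contain points \(q\) for which some segment \([q,A_i]\) passes through a reflex vertex.
\end{itemize}
The admissible points are those \(q\) in the interior of \(P\) for which every open segment \((q,A_i)\) also lies in the interior of \(P\). This set is exactly the interior of \(\kerp(P)\).

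Of your two remedies, only the first works, and it strengthens the hypothesis rather than deriving anything from it. The second is not available: the conjecture guarantees a single point, so it gives you no density with which to move \(p\) off the boundary. So your argument, like the paper's, proves Theorem~\ref{thm:conditional} only if Conjecture~\ref{conj:pentagon} is read as placing the rational-distance point in the interior of \(\kerp(P)\).
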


\begin{proof}
Graphs with at most two vertices have integral drawings immediately. For
graphs with at least three vertices, Lemma~\ref{lem:triangulate} shows that
it is enough to prove the result for plane triangulations, because deleting
added edges from an integral drawing preserves an integral drawing of the
original graph.

We prove, by induction on the number of vertices, that every plane
triangulation has a rational drawing preserving the given embedding. The
base case is a single triangular face, which follows from
Corollary~\ref{cor:triangle}. At the end, Lemma~\ref{lem:scaling} converts
the rational drawing into an integral drawing.

Let \(T\) be a plane triangulation with at least four vertices. By
Lemma~\ref{lem:euler}, \(T\) has a vertex \(v\) of degree \(d\leq5\). In a
triangulation with at least four vertices, \(d\geq3\). Let $v_1,\ldots,v_d$
be the cyclic order of the neighbors of \(v\). By Lemma~\ref{lem:deleted-vertex},
deleting \(v\) creates a face bounded by the cycle $C=v_1v_2\cdots v_dv_1$.

If \(d=3\), the graph \(T-v\) is already a plane triangulation. By induction
it has a rational drawing. The cycle \(C\) is drawn as a triangle with
rational vertex coordinates and rational side lengths. By
Corollary~\ref{cor:triangle}, there is a rational point inside this triangle
at rational distance from its three vertices. Placing \(v\) at this point
and joining it to \(v_1,v_2,v_3\) creates no crossings and preserves rational
edge lengths.

If \(d=4\), add one auxiliary diagonal inside the quadrilateral face bounded by \(C\),
obtaining a plane triangulation \(T'\) on the vertex set \(V(T)\setminus
\{v\}\). By induction, \(T'\) has a rational drawing. The boundary cycle
\(C\) is drawn as a simple quadrilateral with rational side lengths, and the
added diagonal is drawn as an interior diagonal of rational length. By the
rational form of Conjecture~\ref{conj:quad}, that diagonal contains a
rational point at rational distance from all four boundary vertices. By
Lemma~\ref{lem:quad-diagonal}, placing \(v\) at this point and joining it to
the four boundary vertices creates no crossings. Deleting the auxiliary
diagonal gives a rational drawing of \(T\).

If \(d=5\), add two non-crossing auxiliary diagonals inside the pentagonal face bounded
by \(C\), obtaining a plane triangulation \(T'\) on \(V(T)\setminus\{v\}\).
By induction, \(T'\) has a rational drawing. The boundary cycle \(C\) is
drawn as a simple pentagon with rational side lengths. By the rational form
of Conjecture~\ref{conj:pentagon}, its kernel contains a rational point at
rational distance from all five boundary vertices. Placing \(v\) at this
point and joining it to \(v_1,\ldots,v_5\) creates no crossings by the
definition of the kernel. Deleting the two auxiliary diagonals gives a
rational drawing of \(T\).

This completes the induction. Finally, apply Lemma~\ref{lem:scaling}.
\end{proof}

\subsection{Main theorem}

\begin{theorem}[Main quadrilateral theorem]\label{thm:main}
Let \(Q\) be a simple {\bf non-degenerate} quadrilateral, convex or concave, in
\(\R^2\). Assume that the four vertices of \(Q\) have integer coordinates,
the four side lengths of \(Q\) are integers, and an interior
diagonal \(UV\) has integer length. Then there exists a rational point
\(P\) on the open segment \(UV\) such that the four distances from \(P\) to
the vertices of \(Q\) are rational. In particular, \(P\) lies in the
interior of \(Q\).
\end{theorem}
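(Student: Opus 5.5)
Place the diagonal on a coordinate axis and write $P$ by a rational parameter. Let the diagonal be $UV$ with $L=\|U-V\|\in\Z$, and let $A,B$ be the other two vertices, so that $UA,UB,VA,VB\in\Z$. Any point of the open segment is
\[
P_t=U+\tfrac{t}{L}(V-U),\qquad 0<t<L .
\]
For $t\in\Q$ the point $P_t$ is rational, and $PU=t$ and $PV=L-t$ are rational automatically. It remains to make $PA$ and $PB$ rational.

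The foot of the perpendicular from $A$ to the line $UV$ has signed coordinate
\[
a=\frac{UA^2+L^2-VA^2}{2L}\in\Q .
\]
Then
\[
PA^2=(t-a)^2+\alpha,\qquad \alpha=UA^2-a^2\in\Q .
\]
Here $\alpha>0$ because non-degeneracy keeps $A$ off the line $UV$. Similarly $PB^2=(t-b)^2+\beta$ with $b\in\Q$ and $\beta>0$. So the problem is to find rational points with $0<t<L$ on the curve
\[
E:\quad y_1^2=(t-a)^2+\alpha,\qquad y_2^2=(t-b)^2+\beta .
\]
This is an intersection of two quadrics, hence of genus $\le 1$.

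\emph{Degenerate case.} If $(a,\alpha)=(b,\beta)$, then $B$ is the reflection of $A$ in $UV$, and $PA=PB$. We only need rational points on the single conic $y_1^2=(t-a)^2+\alpha$. It has the rational point $(t,y_1)=(0,UA)$, so it is rational and its rational points are dense. Some $t\in(0,L)$ then works.

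\emph{Genus-one case.} Otherwise the four roots $a\pm i\sqrt\alpha$, $b\pm i\sqrt\beta$ of $\bigl((t-a)^2+\alpha\bigr)\bigl((t-b)^2+\beta\bigr)$ are distinct, so $E$ is a smooth genus-one curve. It has many visible rational points:
\begin{itemize}
\item $(0,\pm UA,\pm UB)$ over $t=0$;
\item $(L,\pm VA,\pm VB)$ over $t=L$;
\item four points at infinity with $y_1/t,\,y_2/t=\pm1$.
\end{itemize}
Take one of them as origin, making $E$ an elliptic curve over $\Q$. To make this concrete, parametrize the first conic by $y_1+(t-a)=m$ and $y_1-(t-a)=\alpha/m$. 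This gives $t$ as a rational function of $m$, and the second equation becomes $y^2=q(m)$ with $q$ a quartic. Standard formulas then give a Weierstrass model.

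\emph{Density.} Suppose $E(\Q)$ contains a point of infinite order. The closure of $E(\Q)$ in $E(\R)$ then contains the identity component. The quartic is positive for all real $t$, so each real component of $E$ maps onto the real $t$-line (through the $y_1$-conic). Hence the rational values of $t$ coming from $E(\Q)$ are dense in $\R$. In particular some rational $t\in(0,L)$ occurs, and $P_t$ is the required point. It lies on the open diagonal, hence in the interior of $Q$ (compare Lemma~\ref{lem:quad-diagonal}).

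\emph{Main obstacle: a point of infinite order.} The key step is showing that the points above generate a point of infinite order, i.e. that $E$ has positive rank. I would proceed as follows.
\begin{enumerate}
\item Compute the group relations among the listed points. The sign changes $y_i\mapsto -y_i$ are automorphisms, and differences of points in the same fibre over $t$ give 2-torsion or translation structure. From this, determine which combinations are forced to be torsion.
\item Take $D=(L,VA,VB)-(0,UA,UB)$ and suppose it is torsion. By Mazur's theorem its order $n$ satisfies $n\le 12$.
\item For each such $n$, the condition $nD=O$ is a polynomial identity in $L,a,b,\alpha,\beta$. Together with the integrality and side-length constraints and the geometric conditions ($\alpha,\beta>0$, $A,B$ on opposite sides of $UV$ for an interior diagonal, simplicity), I would try to show these identities force degeneracy, namely $\alpha=0$, $\beta=0$, or the reflection case already handled.
\item If some genuine torsion configuration survives, use one of the other points, for example a point at infinity minus $(0,UA,UB)$, instead. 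The goal is to show that the points cannot all be torsion simultaneously, since the torsion subgroup has order at most $16$ with restricted structure.
\end{enumerate}
Step 3 is where I expect the real work, and possibly exceptional families that need separate handling.
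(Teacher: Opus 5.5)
Your reduction is sound and uses the same framework as the paper:
\begin{itemize}
\item a rational $t\in(0,L)$ gives $PU,PV\in\Q$ for free;
\item the conditions $PA,PB\in\Q$ cut out a genus-one curve, and yours is birational to the paper's detour curve $s\alpha_1+\alpha_2=r$, $\alpha_1,\alpha_2\in\calS$;
\item the reflection case is handled by density on a conic;
\item a single point of infinite order gives density of admissible $t$.
\end{itemize}
The gap is the step you flag as the main obstacle, and it is not just unfinished. The goal of your steps 3--4, that the visible points over $t=0$, $t=L$ and at infinity cannot all be torsion, is false. Take the non-degenerate integral kite $U=(0,0)$, $A=(192,144)$, $V=(384,0)$, $B=(192,-256)$, or the integral parallelogram $U=(0,0)$, $A=(18,24)$, $V=(50,0)$, $B=(32,-24)$. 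In both cases your curve is isomorphic to $y^2=x(x+81)(x+256)$, which has rank $0$ and torsion $\Z/2\Z\oplus\Z/8\Z$. In the parallelogram case its sixteen rational points are your twelve visible ones plus the four over the midpoint $t=25$. So no choice of base point and no analysis of the equations $nD=\calO$ can produce a point of infinite order. The only admissible interior point is itself a torsion point (the intersection of the diagonals), and with rank $0$ there is no density to appeal to.

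What is missing is (i) an explicit point proved non-torsion away from a precisely described exceptional locus, and (ii) a direct construction, or a torsion-table argument, on that locus.

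For (i), the paper uses Love's detour theorem. In normalized coordinates with $s=b/d$ and $r=(c-a)/d$, the point $(-1,r)$ on $y^2=x^3+(1+r^2+s^2)x^2+s^2x$ is non-torsion unless $r=0$, $s=\pm1$, or $4r^2s=\pm(1-s^2)^2$. Since $s<0$, this leaves only $r=0$, $s=-1$, or $4r^2s=-(1-s^2)^2$.

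For (ii), the convex and concave cases are handled differently.
\begin{itemize}
\item Convex, $r=0$ or the quadratic identity: the intersection of $AB$ with $UV$ is itself a solution. Under the quadratic identity, $r/(s-1)=(t-t^{-1})/2$ with $t=(1-s^2)/(2r)$.
\item Convex, $s=-1$ and $a+c\neq D$: the point $(a+c,0)$ or $(a+c-D,0)$ has $PA$ and $PB$ equal to side lengths.
\item Convex symmetric parallelogram: take the midpoint if $AB\in\Q$. Otherwise $E_T(\Q)_{\rm tors}\cong\Z/4\Z$, so the endpoint point is non-torsion.
\item Concave: the special value $\tau=r/(s-1)$ lies outside the parameter interval, by the diagonal criterion. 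The torsion tables of the three exceptional curves put at most one torsion $\alpha$-value on each side of $\tau$. Hence some endpoint point is non-torsion, and your density step then finishes.
\end{itemize}
Without some such case analysis, your proposal proves the theorem only under a positive-rank hypothesis that genuinely fails in some instances.
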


The theorem is independent of orientation. A rational Euclidean
normalization sends the chosen diagonal to the \(x\)-axis and preserves both
rationality of coordinates and Euclidean distances. After this
normalization, the problem becomes a rational-distance problem on a line, and
our main result is the endpoint-line theorem stated below.

The proof is organized as follows. Sections~\ref{sec:normalization}
and~\ref{sec:detour} reduce the problem to a rational-distance problem on
the \(x\)-axis and state the endpoint line theorem. Section~\ref{sec:generic}
proves the non-exceptional line case from Love's detour theorem.
Sections~\ref{sec:convex-exceptional} and~\ref{sec:convex-symmetric}
handle the convex exceptional cases. Section~\ref{sec:exceptional} handles
the remaining exceptional endpoint-line cases and completes the proof of the
endpoint line theorem. Section~\ref{sec:application} then applies the line
theorem and the convex exceptional arguments to prove
Theorem~\ref{thm:main}. Within the relevant case sections we also prove
stronger infinitude theorems and provide two finite rank-zero examples. Readers
interested only in the existence theorem may skip those sub-sections
on a first reading. The appendix provides the Case III verification
details.

\section{Normalization}
\label{sec:normalization}

\begin{lemma}[Rational normalization]\label{lem:normalization}
Let \(U,V\in\Q^2\), and suppose that $D=\|V-U\|$
is a positive rational number. Then there is an affine Euclidean isometry
$F:\R^2\longrightarrow \R^2$
whose matrix and translation vector have rational entries such that
$F(U)=(0,0),\ F(V)=(D,0)$.
The inverse map \(F^{-1}\) also has rational coefficients. Consequently
\(F\) carries \(\Q^2\) bijectively onto \(\Q^2\) and preserves all Euclidean
distances.
\end{lemma}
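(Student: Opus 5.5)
The map will be a translation composed with a rotation whose cosine and sine are both rational. Write \(V-U=(a,b)\in\Q^2\). Since \(D=\|V-U\|=\sqrt{a^2+b^2}\) is a positive rational, the numbers
\[
c=\frac{a}{D},\qquad s=\frac{b}{D}
\]
are rational and satisfy \(c^2+s^2=1\). This is the only place the rationality of \(D\) is used, and it is exactly what makes the rotation rational; no other obstacle arises.

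Let \(R\) be the rotation matrix
\[
R=\begin{pmatrix} c & s\\ -s & c\end{pmatrix},
\]
which has rational entries, is orthogonal, and has determinant \(1\). Define \(F(x)=R(x-U)\), so the linear part is \(R\) and the translation vector is \(-RU\in\Q^2\). Then \(F(U)=(0,0)\). For \(V\) we compute
\[
F(V)=R(a,b)^{T}=\bigl(ca+sb,\,-sa+cb\bigr)=\Bigl(\frac{a^2+b^2}{D},\,0\Bigr)=(D,0).
\]

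Since \(R^{T}R=I\), the map \(F\) preserves Euclidean distances. Its inverse is \(F^{-1}(y)=R^{T}y+U\), and this also has rational coefficients. It follows that \(F\) maps \(\Q^2\) into \(\Q^2\), and \(F^{-1}\) does as well. Hence \(F\) restricts to a bijection \(\Q^2\to\Q^2\).
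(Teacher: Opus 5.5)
Your proposal is correct and is essentially the paper's proof: translate \(U\) to the origin and apply the rotation \(R=\frac1D\begin{pmatrix}a&b\\-b&a\end{pmatrix}\), which is rational because \(D\in\Q\) and whose inverse \(R^{T}\) is again rational. The only differences are cosmetic: you also note \(\det R=1\) and write out \(F^{-1}(y)=R^{T}y+U\) explicitly.
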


\begin{proof}
Translate \(U\) to the origin. Write $V-U=(m,n),\ m,n\in\Q$. Since \(D^2=m^2+n^2\), the matrix
\[
R=\frac1D
\begin{pmatrix}
m&n\\
-n&m
\end{pmatrix}
\]
has rational entries. It is orthogonal because
\[
R^{T}R=
\frac1{D^2}
\begin{pmatrix}
m&-n\\
n&m
\end{pmatrix}
\begin{pmatrix}
m&n\\
-n&m
\end{pmatrix}
=\frac{m^2+n^2}{D^2}I.
\]
Also
\[
R(V-U)=(D,0).
\]
Thus \(F(X)=R(X-U)\) has the desired properties. Since \(R^{-1}=R^T\), the
inverse affine map is again rational.
\end{proof}

\begin{definition}[The set \(\calS\)]
Set $\calS=\{\alpha\in\Q:\ 1+\alpha^2 \text{ is a square in }\Q\}$.
\end{definition}

\begin{lemma}[The elementary distance criterion]\label{lem:distance}
Let \(C=(e,f)\in\Q^2\) with \(f\ne0\), and let \(P=(x,0)\) with
\(x\in\Q\). Then \(PC\in\Q\) if and only if
\[
\frac{x-e}{f}\in\calS .
\]
\end{lemma}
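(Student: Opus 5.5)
The plan is a direct computation. Since \(P=(x,0)\) and \(C=(e,f)\), we have
\[
PC^2=(x-e)^2+f^2=f^2\bigl(1+\alpha^2\bigr),\qquad \alpha:=\frac{x-e}{f}.
\]
Here \(\alpha\in\Q\) because \(x,e,f\in\Q\) and \(f\neq0\). The whole argument reduces to this identity together with the fact that \(f^2\) is a nonzero rational square.

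For the forward direction, assume \(PC\in\Q\). Then \(1+\alpha^2=PC^2/f^2=(PC/f)^2\), and \(PC/f\in\Q\). So \(1+\alpha^2\) is a square in \(\Q\), which means \(\alpha\in\calS\).

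For the converse, assume \(\alpha\in\calS\), so \(1+\alpha^2=r^2\) for some \(r\in\Q\). Then \(PC^2=(fr)^2\). Since \(PC\geq0\), we get \(PC=|fr|\in\Q\).

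There is no real obstacle here. The only points to state explicitly are that dividing by \(f\) is legitimate because \(f\neq0\), and that we take the nonnegative square root.
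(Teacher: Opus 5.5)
Your proposal is correct and follows the same route as the paper: write \(PC^2=f^2(1+\alpha^2)\) with \(\alpha=(x-e)/f\), and use \(f\in\Q^\times\) to conclude that \(PC\in\Q\) exactly when \(1+\alpha^2\) is a rational square. You spell out both directions, including taking \(PC=|fr|\) as the nonnegative root, which the paper compresses into a single sentence.
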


\begin{proof}
Put \(\alpha=(x-e)/f\). Then $PC^2=(x-e)^2+f^2=f^2(1+\alpha^2)$.
Since \(f\in\Q^\times\), the distance \(PC\) is rational if and only if the
nonnegative number \(1+\alpha^2\) is the square of a rational number.
\end{proof}

\begin{lemma}[Parametrization of \(\calS\)]\label{lem:S}
One has
\[
\calS=\left\{\frac{t-t^{-1}}2:\ t\in\Q^\times\right\}.
\]
In particular, \(\calS\) is dense in \(\R\).
Moreover, \(\calS=-\calS\).
\end{lemma}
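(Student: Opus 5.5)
We prove the two inclusions separately, then read off the density and symmetry statements from the parametrization.

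For ``\(\supseteq\)'', let \(t\in\Q^\times\) and put \(\alpha=(t-t^{-1})/2\). A direct computation gives
\[
1+\alpha^2=1+\frac{t^2-2+t^{-2}}{4}=\frac{t^2+2+t^{-2}}{4}=\Bigl(\frac{t+t^{-1}}{2}\Bigr)^2 ,
\]
and \((t+t^{-1})/2\in\Q\). Hence \(\alpha\in\calS\).

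For ``\(\subseteq\)'', let \(\alpha\in\calS\) and pick \(\beta\in\Q\) with \(1+\alpha^2=\beta^2\). Then \((\beta-\alpha)(\beta+\alpha)=1\), so both factors are nonzero rationals. Set \(t=\alpha+\beta\in\Q^\times\). Then \(t^{-1}=\beta-\alpha\), and \(t-t^{-1}=2\alpha\), which gives \(\alpha=(t-t^{-1})/2\). This step is where the real content lies: one has to notice the factorization of the difference of squares, which is the rational parametrization of the conic \(\beta^2-\alpha^2=1\) by lines through a rational point. Once that is seen, the argument is immediate.

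For density, the map \(g(t)=(t-t^{-1})/2\) is continuous and strictly increasing on \((0,\infty)\), since \(g'(t)=(1+t^{-2})/2>0\). Moreover \(g(t)\to-\infty\) as \(t\to0^+\) and \(g(t)\to+\infty\) as \(t\to\infty\). So \(g\) is a homeomorphism \((0,\infty)\to\R\). Because \(\Q_{>0}\) is dense in \((0,\infty)\), its image \(g(\Q_{>0})\subseteq\calS\) is dense in \(\R\).

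For symmetry, note that \(g(-t^{-1})=(-t^{-1}+t)/2=g(t)\) and \(g(t^{-1})=-g(t)\). Thus \(\calS=-\calS\). This also follows directly from the definition, since \(1+(-\alpha)^2=1+\alpha^2\).
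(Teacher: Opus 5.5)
Your proof is correct and follows the paper's argument essentially step for step. It uses the same identity \(1+\alpha^2=\bigl((t+t^{-1})/2\bigr)^2\), the same factorization \((\beta+\alpha)(\beta-\alpha)=1\) with \(t=\alpha+\beta\), the same monotone homeomorphism \((0,\infty)\to\R\) for density, and the same evenness of the defining condition for \(\calS=-\calS\). Your extra identity \(g(-t^{-1})=g(t)\) is true but not needed.
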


\begin{proof}
If \(\alpha=(t-t^{-1})/2\), then
\[
1+\alpha^2=\left(\frac{t+t^{-1}}2\right)^2.
\]
Conversely, suppose \(1+\alpha^2=h^2\) with \(h\in\Q\). Then
\[
(h+\alpha)(h-\alpha)=1.
\]
Taking \(t=h+\alpha\) gives \(t^{-1}=h-\alpha\), and therefore
\[
\alpha=\frac{t-t^{-1}}2.
\]
The map
\[
\phi:(0,\infty)\longrightarrow\R,\qquad
\phi(t)=\frac{t-t^{-1}}2
\]
is continuous and strictly increasing, because
\[
\phi'(t)=\frac{1+t^{-2}}2>0.
\]
Moreover
\[
\lim_{t\to0^+}\phi(t)=-\infty,\qquad
\lim_{t\to\infty}\phi(t)=+\infty.
\]
Thus \(\phi\) maps \((0,\infty)\) homeomorphically onto \(\R\). Since
\(\Q_{>0}\) is dense in \((0,\infty)\), its image
\(\phi(\Q_{>0})\subset\calS\) is dense in \(\R\). Hence \(\calS\) is dense
in \(\R\).
Finally, the defining condition for \(\calS\) is unchanged when \(\alpha\) is
replaced by \(-\alpha\), so \(\calS=-\calS\).
\end{proof}

\section{The endpoint line theorem}
\label{sec:detour}
\begin{theorem}[Endpoint line theorem]\label{thm:endpoint-line}
Let \(U,V\) be distinct rational points on a rational line \(L\), and let
$A,B\in\Q^2\setminus L$ lie on opposite sides of \(L\). Suppose that
$UV,\ UA,\ VA,\ UB,\ VB\in\Q$. Assume also that neither \(U\) nor \(V\) lies on the line \(AB\).
Then the open segment \(UV\) contains a rational point \(P\) such that $PA,\ PB\in\Q$.
Consequently \(PU,PV,PA,PB\) are all rational.
\end{theorem}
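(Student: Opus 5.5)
We would prove Theorem~\ref{thm:endpoint-line} by turning it into a question about rational points on a genus-one curve, and then using density of rational points on the real locus. First apply Lemma~\ref{lem:normalization}: we may assume \(L\) is the \(x\)-axis, \(U=(0,0)\), \(V=(D,0)\) with \(D\in\Q_{>0}\), \(A=(a_1,a_2)\) with \(a_2>0\), and \(B=(b_1,b_2)\) with \(b_2<0\). All these data stay rational, and all distances are preserved. For \(P=(x,0)\) with \(x\in\Q\), Lemma~\ref{lem:distance} says that \(PA,PB\in\Q\) exactly when
\[
\alpha=\frac{x-a_1}{a_2}\in\calS \quad\text{and}\quad \beta=\frac{x-b_1}{b_2}\in\calS .
\]
By Lemma~\ref{lem:S} we write \(\alpha=(s-s^{-1})/2\) and \(\beta=(t-t^{-1})/2\). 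Eliminating \(x\) then gives the curve
\[
\mathcal C:\quad a_2\,(s^2-1)\,t \;-\; b_2\,(t^2-1)\,s \;=\; 2(b_1-a_1)\,s t ,
\]
a curve of bidegree \((2,2)\) in \(\mathbb P^1\times\mathbb P^1\). Rational points on \(\mathcal C\) with \(st\neq0\) are in bijection, up to the sign choices for the square roots, with the rational \(x\) we want. The map \(\xi:\mathcal C\to\mathbb P^1\), \(\xi=a_1+a_2(s-s^{-1})/2\), is the \(x\)-coordinate.

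Second, we record the known rational points and the genus. Because \(UA,UB,VA,VB\in\Q\), the values \(x=0\) and \(x=D\) give rational points of \(\mathcal C\); with the sign ambiguities \(s\mapsto -s^{-1}\) and \(t\mapsto -t^{-1}\) this is eight points, plus the points at \(s,t\in\{0,\infty\}\). We would check that \(\mathcal C\) is smooth of genus one exactly under the stated hypotheses. A singular or reducible \((2,2)\) curve should correspond to \(A\), \(B\) and an endpoint becoming collinear, which is the excluded case that \(U\) or \(V\) lies on \(AB\). We then choose one of the rational points as origin \(O\), so that \((\mathcal C,O)\) is an elliptic curve \(E\) over \(\Q\), and compute a Weierstrass model explicitly in terms of \(a_1,a_2,b_1,b_2,D\).

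Third, the density argument (this is the role of Love's detour theorem). The real locus \(E(\R)\) has one or two components, and \(\xi\) is continuous on it. Take a real component \(K\) containing a point \(P_U\) over \(x=0\) and a point \(P_V\) over \(x=D\); if necessary, use the sign involutions to move \(P_V\) onto the component of \(P_U\). Then \(\xi(K)\) is a connected subset of \(\mathbb P^1(\R)\) containing \(0\) and \(D\), and it must contain all of \((0,D)\). To see this, note that the sign of \(a_2\) versus \(b_2\), i.e.\ \(A\) and \(B\) lying on opposite sides of \(L\), should prevent \(\xi(K)\) from reaching \((0,D)\) only through \(\infty\). If \(P_V-P_U\) has infinite order in \(E(\Q)\), then \(E(\Q)\cap K\) is dense in \(K\), because the closure of an infinite subgroup of the circle group contains the identity component. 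Pulling back the open interval \((0,D)\) under \(\xi\) gives a non-empty open arc of \(K\), and that arc contains a rational point \(P\) with \(st\neq0\). This yields \(x\in(0,D)\cap\Q\) with \(PA,PB\in\Q\), and \(PU=x\), \(PV=D-x\) are rational trivially.

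The main obstacle is the infinite-order claim. Torsion can genuinely occur; these are the "exceptional" configurations, which involve the convex symmetric case. Our plan there is as follows. By Mazur, torsion orders are bounded by \(12\). We compute the division-polynomial conditions for \(P_V-P_U\) (and for the alternative differences obtained from the other sign choices) to have order \(n\le 12\), and show that for generic data at least one difference has infinite order. For the finitely many exceptional algebraic families that remain, we look for a further explicit rational point of infinite order, for instance one coming from a reflection symmetry of the configuration, or else construct the point \(P\) directly.
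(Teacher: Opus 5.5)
Your reduction to the detour curve and the density mechanism match the paper. The gap is the step you yourself call the main obstacle: it is left as a plan, and the plan cannot close it as stated.

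Showing that $P_V-P_U$ (or a sign variant) has infinite order ``for generic data'' does not suffice. The theorem concerns every rational configuration, and determining the rational parameter values lying on the torsion loci is the whole difficulty. Moreover $P_V-P_U$ depends on $D$ and on the four endpoint constraints, which enlarges the family you would have to control.

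More seriously, there are admissible configurations of rank $0$. Two examples are the kite $U=(0,0)$, $A=(192,144)$, $V=(384,0)$, $B=(192,-256)$ and the parallelogram $U=(0,0)$, $A=(18,24)$, $V=(50,0)$, $B=(32,-24)$ (Theorems~\ref{thm:finite-aligned} and~\ref{thm:finite-example}). In both, the endpoint points are torsion, no rational point of infinite order exists, and the unique interior solution is itself a torsion point. So ``look for another point of infinite order'' must fail there. Your fallback ``construct $P$ directly'' is exactly the missing idea.

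The paper supplies that idea in two pieces. It uses the notation $A=(a,b)$, $B=(c,d)$, $s=b/d$, $r=(c-a)/d$.
\begin{itemize}
\item In the generic case it uses Love's theorem (Theorem~\ref{thm:love}). The point $(-1,r)$ on $y^2=x^3+(1+r^2+s^2)x^2+s^2x$ is non-torsion unless $r=0$, $s=\pm1$, or $4r^2s=\pm(1-s^2)^2$. This point depends only on $(r,s)$, not on $D$.
\item In the exceptional families left for $s<0$ (the degenerate case $r=0$, $s=-1$ being trivial), it computes the torsion explicitly. Rational torsion points yield only the special value $\tau=r/(s-1)$, which is the $\alpha$-value of the point $L\cap AB$. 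When $r=0$ or $s=-1$ they yield in addition at most one further value on each side of $\tau$.
\end{itemize}
Since the endpoint values $\lambda,\mu$ differ from $\tau$, one of two things happens. Either some endpoint point is non-torsion and density applies, or $\tau\in I=(\lambda,\mu)$ and the point $L\cap AB$ is itself the required $P$.

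This also shows that you have misplaced the role of the hypothesis $U,V\notin AB$. It cannot be a smoothness condition: your equation for $\mathcal C$ does not involve $D$ and is unchanged when $U$ slides along $L$. In fact, for $a_2>0>b_2$ the curve degenerates only when $B$ is the mirror image of $A$ in $L$. In your variables it then splits as $(st-1)(s+t)=0$, the condition collapses to $\alpha\in\calS$, and density of $\calS$ finishes the argument. What the endpoint hypothesis actually does is precisely $\lambda,\mu\ne\tau$: it keeps the endpoint points off the torsion point lying over $L\cap AB$, and that drives the dichotomy above.

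Finally, your argument that $\xi(K)\supset(0,D)$ is not justified as written. The clean version is that the branch on which both square roots are positive is a graph over $\alpha\in\R$, hence a single arc through both endpoint points. Density on that arc only needs $E(\Q)$ to be infinite, not that $P_V-P_U$ in particular be non-torsion.
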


After applying Lemma~\ref{lem:normalization}, and then, if necessary,
reflecting in the \(x\)-axis and interchanging the names of \(A\) and \(B\),
it is enough to prove Theorem~\ref{thm:endpoint-line} in the following
normalized form, where \(L\) is the \(x\)-axis:
\[
U=(0,0),\qquad V=(D,0),\qquad D\in\Q_{>0},
\]
\[
A=(a,b),\qquad B=(c,d),\qquad a,b,c,d\in\Q,\qquad b>0>d.
\]
The additional nondegeneracy hypothesis is that neither endpoint
\((0,0)\) nor \((D,0)\) lies on the line through \(A\) and \(B\).
For \(P=(x,0)\), define
\[
\alpha_1=\frac{x-a}{b},\qquad
\alpha_2=\frac{c-x}{d},
\]
and set
\[
s=\frac bd,\qquad r=\frac{c-a}{d}.
\]
Then \(s<0\).

\begin{lemma}[The detour equation]\label{lem:detour}
With the notation above,
\begin{equation}\label{eq:detour}
s\alpha_1+\alpha_2=r.
\end{equation}
Moreover, \(P=(x,0)\) has rational distance from both \(A\) and \(B\) if and
only if $\alpha_1,\alpha_2\in\calS$.
Finally, \(P\) lies on the open segment \(UV\) if and only if
\[
\alpha_1\in I:=
\left(-\frac ab,\frac{D-a}{b}\right).
\]
\end{lemma}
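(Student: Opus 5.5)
The proof is a direct computation in three parts, each using only the definitions of \(\alpha_1,\alpha_2,s,r\) and Lemma~\ref{lem:distance}.

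\emph{The detour equation.} Multiply out \(s\alpha_1\):
\[
s\alpha_1=\frac bd\cdot\frac{x-a}{b}=\frac{x-a}{d}.
\]
Adding \(\alpha_2=(c-x)/d\) gives
\[
s\alpha_1+\alpha_2=\frac{x-a+c-x}{d}=\frac{c-a}{d}=r.
\]
The variable \(x\) cancels, so \eqref{eq:detour} holds identically in \(x\). This uses \(b\neq0\) and \(d\neq0\), both guaranteed by \(b>0>d\).

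\emph{Rationality criterion.} Apply Lemma~\ref{lem:distance} to \(C=A=(a,b)\), where \(f=b\neq0\). For rational \(x\), it gives \(PA\in\Q\) if and only if \((x-a)/b=\alpha_1\in\calS\). Apply the same lemma to \(C=B=(c,d)\). It gives \(PB\in\Q\) if and only if \((x-c)/d\in\calS\). Now \((x-c)/d=-\alpha_2\), and Lemma~\ref{lem:S} gives \(\calS=-\calS\), so this condition is \(\alpha_2\in\calS\).

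One subtlety: Lemma~\ref{lem:distance} assumes \(x\in\Q\). I will read the statement as concerning rational points \(P\), which is the setting of Theorem~\ref{thm:endpoint-line}. The case of real \(x\) needs no separate treatment: if \(\alpha_1\in\calS\subset\Q\), then \(x=a+b\alpha_1\) is automatically rational, so that direction is consistent.

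\emph{Segment condition.} The map \(x\mapsto\alpha_1=(x-a)/b\) is strictly increasing because \(b>0\). It therefore carries the open interval \((0,D)\), which is the open segment \(UV\) on the \(x\)-axis, bijectively onto
\[
\left(-\frac ab,\ \frac{D-a}{b}\right)=I.
\]

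None of these steps is a real obstacle. The only points needing care are the sign bookkeeping for \(\alpha_2\), handled by \(\calS=-\calS\), and the use of \(b>0\) to preserve the orientation of the interval.
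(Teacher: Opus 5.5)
Your proof is correct and is essentially the paper's argument: the same cancellation of $x$ in $s\alpha_1+\alpha_2$, then Lemma~\ref{lem:distance} applied to $A$ and to $B$ with $\calS=-\calS$ absorbing the sign of $\alpha_2$, and finally the monotone substitution $x=a+b\alpha_1$ with $b>0$ for the interval. Your explicit restriction to rational $P$ is the right reading (the paper uses it tacitly through Lemma~\ref{lem:distance}), and it is genuinely needed: for real $x$ the ``only if'' direction fails, e.g.\ if $c=a$ and $d=-b$, then $x=a+\sqrt3\,b$ gives $PA=PB=2b\in\Q$ while $\alpha_1=\sqrt3\notin\calS$.
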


\begin{proof}
The linear relation is immediate:
\[
s\alpha_1+\alpha_2
\frac bd\frac{x-a}{b}+\frac{c-x}{d}
\frac{c-a}{d}
r.
\]
The rational-distance assertion is Lemma~\ref{lem:distance}, applied first
to \(A\) and then to \(B\). For \(B\), Lemma~\ref{lem:distance} uses
\((x-c)/d\), while our \(\alpha_2\) is its negative; this is equivalent
because \(\calS\) is closed under negation. Finally, since
\(x=a+b\alpha_1\) and \(b>0\),
the inequality \(0<x<D\) is equivalent to
\[
-\frac ab<\alpha_1<\frac{D-a}{b}.
\]
\end{proof}

The endpoint hypotheses say exactly that the two endpoint values
\[
\lambda=-\frac ab,\qquad
\mu=\frac{D-a}{b}
\]
are rational detour solutions:
\[
\lambda,\mu\in\calS,\qquad
r-s\lambda,\ r-s\mu\in\calS .
\]
Indeed, these are the values of \(\alpha_1\) at \(P=U\) and \(P=V\), and
the rationality of \(UA,VA,UB,VB\) is translated by
Lemma~\ref{lem:distance}. As in Lemma~\ref{lem:detour}, the sign convention
in the \(B\)-coordinate does not change the condition, since
\(\calS=-\calS\).
The endpoint line theorem asks for a rational detour solution with
\(\alpha_1\) strictly between \(\lambda\) and \(\mu\).
Its proof is completed in three stages below: the non-exceptional case,
the convex exceptional cases needed later in the quadrilateral argument, and
the remaining exceptional endpoint-line cases.

\section{The generic endpoint-line case}
\label{sec:generic}

We use the following form of Love's detour theorem.

\begin{theorem}[Love's detour theorem\cite{Love}]\label{thm:love}
Let \(r,s\in\Q\), \(s\ne0\), and consider
\[
s\alpha_1+\alpha_2=r,\qquad
\alpha_1,\alpha_2\in\calS.
\]
Associated to this equation is the elliptic curve
\[
E_{r,s}:\quad
y^2=x^3+(1+r^2+s^2)x^2+s^2x
\]
and the rational point
\[
R=(-1,r).
\]
If
\[
r\ne0,\qquad s\ne\pm1,\qquad
4r^2s\ne\pm(1-s^2)^2,
\]
then \(R\) is non-torsion. Hence rational detour solutions are dense on the
corresponding real detour component, for each fixed choice of signs of the
two square roots.
\end{theorem}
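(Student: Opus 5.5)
The plan has three parts: relate the detour equation to \(E_{r,s}\), show that \(R\) has infinite order, and transport density from \(E_{r,s}(\R)\) back to the detour branches.

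\textbf{Step 1: the curve and the point \(R\).} By Lemma~\ref{lem:S}, write \(\alpha_i=(t_i-t_i^{-1})/2\) with \(t_i\in\Q^\times\). The detour equation~\eqref{eq:detour} becomes
\[
s(t_1^2-1)t_2+t_1(t_2^2-1)=2rt_1t_2 .
\]
This is a curve of bidegree \((2,2)\) in \((t_1,t_2)\), hence of genus one. I will write down the explicit birational change of variables sending it to \(E_{r,s}\), for instance by treating the equation as a quadratic in \(t_2\) and completing the square in its discriminant. Under this map the four sign choices for \(\sqrt{1+\alpha_i^2}\) should correspond to translation by the \(2\)-torsion points, e.g.\ \(T_0=(0,0)\). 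I will then check directly that \(R=(-1,r)\in E_{r,s}(\Q)\): substituting \(x=-1\) gives \(y^2=-1+(1+r^2+s^2)-s^2=r^2\).

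\textbf{Step 2: \(R\) is non-torsion (the main obstacle).} I would follow the route of Mazur's theorem and rule out every order \(n\le 12\).
\begin{itemize}
\item Order \(2\) is excluded because \(y(R)=r\neq0\).
\item For \(n\in\{3,4,5,6,7,8,9,10,12\}\), compute \(x(2R)\) and \(x(3R)\) by the duplication and addition formulas. Then impose \(x(kR)=x(lR)\) with \(k+l=n\), or \(y(mR)=0\) when \(n=2m\). Each condition is a polynomial relation in \(r,s\).
\end{itemize}
I expect these polynomials to factor. Some factors are \(r\), \(s\pm1\) and \(4r^2s\mp(1-s^2)^2\); the latter presumably encode \(4R=O\) or a similar low-order collision, and all of these are excluded by hypothesis.

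The remaining factors define curves in the \((r,s)\)-plane. I must show these have no rational points, or only points lying in the excluded locus. Where I can, I will do this by identifying them with modular curves \(X_1(N)\) of positive genus, or with conics or elliptic curves of rank zero. This case analysis is the step I expect to be hardest.

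As a fallback, I would use a specialization or height argument: show that \(x(2R)\) has a denominator that grows under reduction, and apply Nagell--Lutz to a suitable integral model after clearing denominators of \(r,s\).

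\textbf{Step 3: density.} Once \(R\) has infinite order, the subgroup \(\langle R\rangle\) is infinite. Its closure in the compact Lie group \(E_{r,s}(\R)\), which is one or two circles, therefore contains the whole identity component. Adding \(T_0\), or another rational torsion point, covers the other real component if there is one.

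The birational map of Step~1 is continuous off finitely many points and maps each real component onto the corresponding real detour branch, one for each choice of signs of the square roots. So the rational points \(kR+T\) map to a dense set of rational solutions \((\alpha_1,\alpha_2)\in\calS^2\) on each branch.
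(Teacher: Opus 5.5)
\textbf{Gap in Step~2.} Step~2 carries the whole content of the statement. The paper itself does not reprove it: it imports the non-torsion of $R$ from Love's Theorem~1.4 and only adds the density remark. What you give there is a programme rather than a proof. The factorizations are only expected, and the residual curves are to be handled ``where I can''. The Nagell--Lutz fallback only yields integrality conditions, which $2R$ does satisfy for some parameters: for $r=\tfrac12$, $s=2$ the model $y^2=x^3+21x^2+64x$ has $2R=(36,\pm276)$. So it is not a uniform argument.

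The suggested mechanism is also misdirected. For every order $N$ permitted by Mazur, $X_1(N)$ has genus $0$, so passing to $X_1(N)$ cannot obstruct anything. You must instead find the rational points of the specific covers that occur in the $(r,s)$-plane. Your reading of the hypotheses is off too: $s=\pm1$ is the order-$4$ condition, and $4r^2s=\pm(1-s^2)^2$ is the order-$8$ condition, not $4R=\calO$.

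\textbf{How to close it.} On $y^2=x^3+ax^2+s^2x$ one has $x(2P)=(x^2-s^2)^2/(4y^2)$, so $x(2R)=w^2$ with $w=(1-s^2)/(2r)$. The quadratic $x^2+(1+r^2+s^2)x+s^2$ has two negative roots and takes the value $-r^2<0$ at $x=-1$. Hence $E_{r,s}(\R)$ has two components, $(0,0)$ lies on the identity component, and $R$ lies on the other one. So $R\notin 2E_{r,s}(\R)$, which kills all odd orders at once. Every even multiple of $R$ lies in the identity component, whose only $2$-torsion point is $(0,0)$. The remaining orders then go as follows.
\begin{itemize}
\item Order $4$ holds iff $w=0$ iff $s=\pm1$.
\item Order $8$ holds iff $2R$ halves $(0,0)$ iff $w^4=s^2$ iff $4r^2s=\pm(1-s^2)^2$.
\item Order $10$ would put the $2$-torsion point $5R$ on the non-identity component. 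That forces full rational $2$-torsion and $\Z/2\Z\times\Z/10\Z\subseteq E_{r,s}(\Q)$, contrary to Mazur.
\item Order $6$ means $\psi_3(w^2)=0$. The root $s^2=-w^2$ is impossible, so this gives $s^2(1-w^2)=w^2(1+3w^2)$ with $w\ne0,\pm1$, i.e.\ a non-obvious point on $v^2=(1-w^2)(1+3w^2)$. Its Jacobian $y^2=x^3-x^2+x$ is $2$-isogenous to 24a1. It has rank $0$ and only four rational points, all accounted for by $w=0,\pm1$.
\item Order $12$ forces $6R=(0,0)$, hence $x(3R)=-|s|$. From $x(P+Q)\,x(P-Q)=(x_Px_Q-s^2)^2/(x_P-x_Q)^2$ one gets $x(3R)=-(w^2+s^2)^2/(w^2+1)^2$. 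So $|s|=k^2$ and $w^2=k(k^2+k+1)$ with $k\in\Q_{>0}$, $k\ne1$. This is impossible, since $y^2=x^3+x^2+x$ ($2$-isogenous to 48a1) has only the rational points $\calO$ and $(0,0)$.
\end{itemize}

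\textbf{Step~3.} Adding $T_0=(0,0)$ does not reach the second real component, because $(0,0)$ lies on the identity component. Also, a single sign flip $t_i\mapsto-t_i^{-1}$ fixes $t_i=\pm i$, so it is a reflection $P\mapsto Q-P$, not a translation. What saves the density claim is that $R$ itself lies on the non-identity component, so its odd multiples are dense there.
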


\begin{proof}
This is the detour case of Love's theorem on rational configuration
problems; see \cite[Theorem 1.4 and Section 5]{Love}. The density statement
is the standard consequence that the subgroup generated by a non-torsion
point is dense in the connected component of the identity in the real Lie
group \(E_{r,s}(\R)\), and hence dense in the real component containing that
point after translation. Passing back through the detour parametrization
preserves density on the corresponding real sign branch.
\end{proof}

\begin{proposition}[Generic endpoint line theorem]\label{prop:generic}
Assume the normalized endpoint-line setup. If
\[
r\ne0,\qquad s\ne\pm1,\qquad
4r^2s\ne\pm(1-s^2)^2,
\]
then there are infinitely many rational points \(P\) on the open segment
\(UV\) such that \(PA,PB\in\Q\).
\end{proposition}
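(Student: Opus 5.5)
We reduce Proposition~\ref{prop:generic} directly to Theorem~\ref{thm:love}. By Lemma~\ref{lem:detour}, rational points \(P=(x,0)\) on the open segment \(UV\) with \(PA,PB\in\Q\) correspond bijectively, via \(x=a+b\alpha_1\), to rational pairs \((\alpha_1,\alpha_2)\) solving \(s\alpha_1+\alpha_2=r\) with \(\alpha_1,\alpha_2\in\calS\) and \(\alpha_1\in I=(\lambda,\mu)\). The hypotheses on \(r,s\) are exactly those of Theorem~\ref{thm:love}, so the associated point \(R=(-1,r)\) on \(E_{r,s}\) is non-torsion. It therefore suffices to show that the set of \(\alpha_1\)-values of rational detour solutions is dense in some open interval meeting \(I\) (in fact in \(I\) itself). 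Since the map \(\alpha_1\mapsto x\) is an affine bijection with \(b>0\), a dense set of \(\alpha_1\) in \(I\) gives infinitely many distinct points \(P\).

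Step one is to make the parametrization explicit. Write \(\alpha_1=(t-t^{-1})/2\) and \(\alpha_2=(u-u^{-1})/2\) with \(t,u\in\Q^\times\) (Lemma~\ref{lem:S}). The detour equation becomes a curve in \((t,u)\), birational over \(\Q\) to \(E_{r,s}\). Equivalently, we solve for \(\alpha_2=r-s\alpha_1\) and require that both \(1+\alpha_1^2\) and \(1+(r-s\alpha_1)^2\) be rational squares. This is the intersection of two quadrics, i.e.\ a genus one curve \(C\) with the rational point coming from \(\alpha_1=\lambda\) (indeed the endpoint hypotheses give two rational points, \(\lambda\) and \(\mu\)). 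The sign choices of the two square roots \(h_1=\sqrt{1+\alpha_1^2}\), \(h_2=\sqrt{1+\alpha_2^2}\) label the branches. The key observation is that over every real \(\alpha_1\) there lie real points of \(C\) for all four sign choices, because \(1+\alpha^2>0\) always. Hence the projection \(C(\R)\to\R\), \((\alpha_1,h_1,h_2)\mapsto\alpha_1\), restricted to a fixed sign branch, is a homeomorphism onto \(\R\), and each branch is an arc of a real component of \(C(\R)\).

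Step two invokes density. Theorem~\ref{thm:love} asserts that rational detour solutions are dense on the real detour component for each fixed choice of signs. We therefore fix the branch \(h_1,h_2>0\) and conclude that the rational \(\alpha_1\) on this branch are dense in \(\R\), hence dense in \(I\), which is a nonempty open interval because \(\lambda<\mu\) (as \(D>0\), \(b>0\)). Infinitely many rational \(\alpha_1\in I\) then give infinitely many \(P\), and \(PU,PV\) are automatically rational since \(P,U,V\) lie on the \(x\)-axis with rational coordinates.

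The main obstacle is justifying that density on the elliptic curve transfers to density of \(\alpha_1\) in the whole interval \(I\), rather than only on a sub-arc. If \(E_{r,s}(\R)\) has two components and the rational points generated by \(R\) (plus torsion) lie only on the identity component, then a given branch could map to only part of the \(\alpha_1\)-line. I would handle this first by using the endpoint point \(\lambda\) itself: the point of \(C\) above \(\lambda\) on the branch selected by the true signs is a rational point, and translating the dense subgroup \(\overline{\langle R\rangle}\supseteq E^0(\R)\) by it gives density on the whole real component containing it. A neighborhood of \(\lambda\) in \(I\), namely \((\lambda,\lambda+\varepsilon)\), lies over that component, and this already yields infinitely many solutions, which is all the proposition claims. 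So even if full density in \(I\) fails, density near the endpoint suffices.
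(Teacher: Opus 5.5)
Your proposal is correct and follows the paper's proof. You reduce via Lemma~\ref{lem:detour} to the detour equation, work on the positive-sign branch through the rational endpoint point, and use the non-torsion point from Theorem~\ref{thm:love} to get infinitely many rational \(\alpha_1\in I\). Your explicit translation of \(\overline{\langle R\rangle}\supseteq E^0(\R)\) by the endpoint point is the mechanism the paper's proof of Theorem~\ref{thm:love} alludes to. Since the whole positive branch is a connected arc lying in that one component, the argument already gives density in all of \(I\), so your final hedge about only getting \((\lambda,\lambda+\varepsilon)\) is unnecessary.
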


\begin{proof}
Let
\[
I=\left(-\frac ab,\frac{D-a}{b}\right).
\]
The endpoint hypotheses give rational detour points at the two endpoints of
the nonempty interval \(I\). At those endpoints, choose the positive
rational values of the two square roots
\[
\sqrt{1+\alpha_1^2},\qquad
\sqrt{1+(r-s\alpha_1)^2}
\]
and then take the real branch on which these square roots remain positive.
This branch is parameterized continuously by the real variable \(\alpha_1\)
and has no singularity over a finite value of \(\alpha_1\), so the endpoint
points lie on the same real branch.
By Theorem~\ref{thm:love}, rational detour solutions are dense on this
branch. Since \(I\) is open and nonempty, there are infinitely many rational
detour solutions with \(\alpha_1\in I\). For each such solution, put
\[
x=a+b\alpha_1,\qquad P=(x,0).
\]
Lemma~\ref{lem:detour} gives \(0<x<D\) and \(PA,PB\in\Q\).
\end{proof}

\section{Elementary exceptional cases for convex quadrilaterals}
\label{sec:convex-exceptional}

In this section we give the elementary convex proofs of the exceptional
configurations. We assume the normalized setup
\[
U=(0,0),\qquad V=(D,0),\qquad D\in\Q_{>0},
\]
\[
A=(a,b),\qquad B=(c,d),\qquad b>0>d,
\]
with
\[
s=\frac bd,\qquad r=\frac{c-a}{d}.
\]
We assume in addition that the quadrilateral $Q=UAVB$ with diagonal \(UV\)
is simple, non-degenerate, and convex. Hence the other diagonal \(AB\) meets \(UV\)
in the relative interior of both diagonals.

Since \(s<0\), the exceptional conditions in Theorem~\ref{thm:love}
reduce to
\begin{equation}\label{eq:convex-exceptional-list}
r=0,\qquad s=-1,\qquad
4r^2s=-(1-s^2)^2.
\end{equation}
Indeed, the equality
\[
4r^2s=(1-s^2)^2
\]
has nonpositive left side and nonnegative right side. Therefore it can hold
only when both sides are zero, i.e. when \(r=0\) and \(s=-1\), already
included in the first two cases in \eqref{eq:convex-exceptional-list}.

\begin{proposition}[The case \(r=0\)]\label{prop:convex-r-zero}
Assume the normalized convex quadrilateral setup. If $r=0,$
then the open segment \(UV\) contains a rational point whose distances from
\(U,V,A,B\) are rational.
\end{proposition}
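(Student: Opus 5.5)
The plan is to exhibit the point directly rather than invoke any elliptic-curve argument: when \(r=0\), the foot of the common perpendicular from \(A\) and \(B\) to the \(x\)-axis should already be a solution. Recall \(r=(c-a)/d\) with \(d\neq0\), so \(r=0\) means exactly \(c=a\). Then \(A=(a,b)\) and \(B=(a,d)\) lie on the vertical line \(x=a\), and the other diagonal \(AB\) is contained in this line.

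First, I will locate the candidate point \(P=(a,0)\) on the open segment \(UV\). Since the quadrilateral \(Q=UAVB\) is convex and non-degenerate, the diagonals \(AB\) and \(UV\) meet in the relative interior of both. The only point of the line \(x=a\) on the \(x\)-axis is \((a,0)\), so this is the intersection point. Hence \(0<a<D\), and \(P\) lies on the open segment \(UV\). Moreover \(P\in\Q^2\) because \(a\in\Q\).

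Second, I will check the four distances:
\[
PU=a,\qquad PV=D-a,\qquad PA=b,\qquad PB=-d .
\]
All four are rational, since \(a,b,d,D\in\Q\). Equivalently, in the language of Lemma~\ref{lem:detour}, \(\alpha_1=\alpha_2=0\in\calS\) solves \(s\alpha_1+\alpha_2=0\), and \(\alpha_1=0\) lies in \(I\).

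I expect no real obstacle; the only point needing care is justifying \(0<a<D\) strictly. This is exactly where convexity enters, through the relative-interior intersection of the diagonals noted at the start of the section. Without convexity \((a,0)\) could lie outside the segment, which is why this case is treated separately here.
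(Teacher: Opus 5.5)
Your proposal is correct and follows the paper's own proof: \(r=0\) forces \(c=a\), convexity places the diagonal intersection \((a,0)\) in the open segment \(UV\), and the four distances \(a\), \(D-a\), \(b\), \(-d\) are rational.
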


\begin{proof}
The condition \(r=0\) means $c=a$.
Thus the second diagonal \(AB\) is vertical and intersects the \(x\)-axis at
$X=(a,0)$. In a convex quadrilateral, the two diagonals intersect in their relative
interiors. Hence \(X\) lies on the open segment \(UV\), so $0<a<D$.
The distances from \(X\) to \(U\) and \(V\) are $XU=a,\ XV=D-a$,
which are rational. The distances from \(X\) to \(A\) and \(B\) are
$XA=|b|=b,\ XB=|d|=-d$, which are rational because \(b,d\in\Q\).
Therefore \(X\) is a required point.
\end{proof}

\subsection{\texorpdfstring{A finite aligned \(r=0\) example}{A finite aligned r=0 example}}

\begin{example}\label{ex:finite-aligned}
Let $U=(0,0),\ A=(192,144),\ V=(384,0),\ B=(192,-256)$.
Then \(UAVB\) is a convex kite whose chosen diagonal is \(UV\).
The side lengths are $UA=AV=240,\ VB=BU=320$, and $UV=384$.
Thus this is an integer-coordinate quadrilateral with integer side lengths
and an integer interior diagonal.
\end{example}

\begin{theorem}[A rank-zero finite aligned example]\label{thm:finite-aligned}
For the quadrilateral in Example~\ref{ex:finite-aligned},
\[
\mathcal R^\circ(U,V;A,B)=\{(192,0)\}.
\]
\end{theorem}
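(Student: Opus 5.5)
The plan is to reduce the membership condition for \(\mathcal R^\circ(U,V;A,B)\) to a genus-one Diophantine problem, identify the resulting elliptic curve, and show that its Mordell--Weil group is a small torsion group whose relevant points give only the midpoint. We first note that \((192,0)\) lies in the set: it is the midpoint of \(UV\), it is rational, its distances to \(U,V\) are \(192\), and its distances to \(A,B\) are \(144\) and \(256\). This is also the instance of Proposition~\ref{prop:convex-r-zero}, since \(a=c=192\). The content of the theorem is the reverse inclusion.

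Let \(P=(x,0)\) with \(x\in\Q\) and \(0<x<384\). Then \(PU,PV\in\Q\) automatically. Put \(u=x-192\) and \(u=16w\). Since \(144=16\cdot 9\) and \(256=16\cdot 16\), the conditions \(PA,PB\in\Q\) become
\[
w^2+81=\square,\qquad w^2+256=\square,\qquad |w|<12 .
\]
Setting \(X=w^2\), a solution \(w\) gives a rational point on
\[
E:\quad Y^2=X(X+81)(X+256),
\]
with \(X\), \(X+81\), \(X+256\) all rational squares. By the standard \(2\)-descent criterion for curves with full rational \(2\)-torsion \(y^2=(x-e_1)(x-e_2)(x-e_3)\), a point \((X,Y)\) with all three \(X-e_i\) squares is exactly a point of \(2E(\Q)\) (including \(O\)). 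Conversely, every affine point of \(2E(\Q)\) yields such \(w=\pm\sqrt X\). So the set of admissible \(w\) is governed by \(2E(\Q)\).

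Next we determine \(E(\Q)\). The torsion contains \(E[2]=\{O,(0,0),(-81,0),(-256,0)\}\). Because \(0-(-81)=9^2\) and \(0-(-256)=16^2\) are squares, \((0,0)\) is halved. Hence there are points \(T\) of order \(4\) with \(2T=(0,0)\); one computes \(T=(\pm 144,\dots)\). So \(E(\Q)_{\rm tors}\supseteq \Z/2\times\Z/4\). By Mazur's theorem the torsion is \(\Z/2\times\Z/4\) or \(\Z/2\times\Z/8\). To exclude the latter, we check that no order-\(4\) point \(T\) is halvable, meaning \(x(T)-e_i\) are not all squares. Alternatively, we count points modulo a couple of small primes of good reduction (for example \(p=7,11\)) and use injectivity of torsion into \(E(\mathbb F_p)\).

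The main obstacle is proving \(\rank E(\Q)=0\). I would do a full \(2\)-isogeny descent, or a complete \(2\)-descent using the three \(2\)-torsion points. The bad primes are \(2,3,5,7,13\), since \(256-81=175=5^2\cdot 7\). We would show that the images of the descent maps are spanned by the images of torsion, exhibiting local obstructions at \(p=2,3,5,7,13\) or at \(\infty\) for each remaining candidate homogeneous space. As a consistency check, one can identify \(E\) with its entry in Cremona's tables after minimal-model reduction; the equation is \(Y^2=X(X+81)(X+256)\), up to the substitution making coefficients minimal.

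Granting rank zero and torsion \(\Z/2\times\Z/4\), we get
\[
2E(\Q)=2E(\Q)_{\rm tors}=\{O,(0,0)\}.
\]
The only affine candidate is \(X=0\), so \(w=0\), \(u=0\), and \(x=192\). This gives \(\mathcal R^\circ(U,V;A,B)=\{(192,0)\}\). The endpoints \(w=\pm12\), which come from \(12^2+9^2=15^2\) and \(12^2+16^2=20^2\), correspond to \(X=144\), the \(x\)-coordinate of the order-\(4\) points. These points are not in \(2E(\Q)\), and this is consistent: they satisfy the square conditions only because \(X+81\) and \(X+256\) are squares there. Before finalizing, we should recheck the criterion itself, since \(X=144\) also makes all three factors squares. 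This suggests that the order-\(4\) points may be halvable after all. The careful step is therefore the precise torsion determination: if the torsion is \(\Z/2\times\Z/8\), then \(2E(\Q)\) contains the order-\(4\) points, and we get exactly \(w\in\{0,\pm12\}\). In that case only \(w=0\) lies in the open range \(|w|<12\), and the conclusion is unchanged. Either way, the final step is to list the \(X\)-coordinates of \(2E(\Q)_{\rm tors}\) and discard those giving \(|w|\ge 12\).
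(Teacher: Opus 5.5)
Your final argument is correct, but it identifies the admissible points by a different mechanism than the paper. The paper passes to the concordant-form curve $E_\rho$ and reads the possible values $\alpha=0,\pm4/3$ off the Selder--Spindler torsion-value table, using rank zero only to rule out non-torsion points. You instead send a solution $w$ to the point with $X=w^2$ and invoke the classical $2$-descent criterion: a point of $Y^2=(X-e_1)(X-e_2)(X-e_3)$ lies in $2E(\Q)$ if and only if every $X-e_i$ is a rational square. This identifies the solution set exactly with $\{\pm\sqrt{X(R)}: R\in 2E(\Q)\setminus\{\calO\}\}$. Granting $\rank E(\Q)=0$, Mazur's theorem together with full $2$-torsion and a point of order $4$ shows that every element of $2E(\Q)$ has order dividing $4$, and those points are explicit.

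Your route is more self-contained than the paper's. It needs no isomorphism with the intersection of quadrics and no external torsion table, and it explains the endpoints conceptually. The paper's route has the advantage of being a specialization of its uniform treatment of the $r=0$ family. For rank zero, both rely on the same external input: the paper does no descent and simply cites Cremona--LMFDB, so your full $2$-descent is optional. To locate the curve in the tables, the substitution $X=4x'$, $Y=8y'+4x'$ followed by a translation gives the minimal model $y^2+xy=x^3-1070x+7812$, of conductor $210$.

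You should, however, delete the $\Z/2\times\Z/4$ branch instead of hedging. Your proposed check that no order-$4$ point is halvable fails. Indeed $144=12^2$, $144+81=15^2$ and $144+256=20^2$, and explicitly $2(864,30240)=(144,3600)$. So $E(\Q)_{\rm tors}\cong\Z/2\times\Z/8$, as the paper states.

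Hence your claim $2E(\Q)=\{\calO,(0,0)\}$ is false. Your sentence saying the endpoint points are not in $2E(\Q)$ also contradicts your own criterion. The endpoints $w=\pm12$ are solutions because the side lengths are integers, so they \emph{must} come from $2E(\Q)$; this alone already rules out the $\Z/2\times\Z/4$ branch.

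The correct list is $2E(\Q)=\{\calO,(0,0),(144,\pm3600)\}$. The other order-$4$ points $(-144,\pm1008)$ are excluded because $-144$ is not a square. This gives $w\in\{0,\pm12\}$, and only $w=0$, i.e.\ $x=192$, satisfies $|w|<12$.
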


\begin{proof}
Here the normalized data are
$D=384,\ a=c=192,\ b=144,\ d=-256$.
Therefore
\[
r=\frac{c-a}{d}=0,
\qquad
s=\frac bd=-\frac9{16},
\qquad
\rho=-s=\frac9{16}.
\]
For \(P=(x,0)\), put
\[
\alpha=\frac{x-192}{144}.
\]
Then \(P\in UV^\circ\) if and only if
\[
-\frac43<\alpha<\frac43.
\]
Moreover
\[
PA^2=144^2(1+\alpha^2),
\]
and, since \(B=(192,-256)\),
\[
PB^2=256^2\left(1+\left(\frac{144}{256}\alpha\right)^2\right)
=256^2(1+\rho^2\alpha^2).
\]
Thus \(PA\) and \(PB\) are rational if and only if
$1+\alpha^2,\ 1+\rho^2\alpha^2$ are rational squares.

The associated concordant-form elliptic curve is
\[
E_\rho:\quad
\eta^2=\xi(\xi+\rho^2)(\xi+1)
\xi\left(\xi+\frac{81}{256}\right)(\xi+1).
\]
With $X=256\xi,\ Y=4096\eta$, this becomes
\[
E:\quad
Y^2=X(X+81)(X+256).
\]
We use the Cremona--LMFDB rank-zero \cite{Cremona,LMFDB}:
\[
\rank E(\Q)=0,\qquad
E(\Q)*{\rm tors}\simeq \Z/2\Z\oplus\Z/8\Z.
\]
Hence every rational point of \(E*\rho\), equivalently of \(E\), is torsion.

The torsion-value calculation for the aligned \(r=0\) concordant-form case,
as in Selder--Spindler's description of concordant forms
\cite[Theorem~2 and Theorem~4]{SelderSpindler}, says that the finite
\(\alpha\)-values coming from rational torsion are
\[
\alpha=0
\quad\text{and, since }\rho=\left(\frac34\right)^2,\quad
\alpha=\pm\frac1{\sqrt{\rho}}
\pm\frac43.
\]
Because the curve has rank \(0\), there are no non-torsion rational points
and hence no other rational \(\alpha\)-values.

The values \(\alpha=\pm4/3\) are precisely the two endpoints of the interval:
\[
\alpha=-\frac43
\quad\Longleftrightarrow\quad
x=192+144\left(-\frac43\right)=0,
\]
and
\[
\alpha=\frac43
\quad\Longleftrightarrow\quad
x=192+144\left(\frac43\right)=384.
\]
The only torsion value strictly inside the interval is \(\alpha=0\), giving
$x=192$. Thus the only interior point is $P=(192,0)$.
It indeed works, since $PU=PV=192,\ PA=144,\ PB=256$.
Therefore $\mathcal R^\circ(U,V;A,B)=\{(192,0)\}$.
\end{proof}

\begin{remark}[Convex \(r=0\) can be finite]\label{rem:convex-r-zero-finite}
Theorem~\ref{thm:finite-aligned} gives an explicit affirmative answer to
the question whether the convex \(r=0\) case can have only finitely many
rational points on the interior diagonal. In fact, the example has exactly
one such interior point. Thus the convex \(r=0\) branch of the existence
proof in Proposition~\ref{prop:convex-r-zero} should not be interpreted as
always producing infinitely many points. Additional arithmetic information
about the associated concordant-form elliptic curve is needed. Positive-rank
specializations can yield infinitely many points on suitable real intervals,
whereas in the rank-zero specialization above the torsion table leaves only
the two endpoints and the single interior point \((192,0)\).
\end{remark}

\begin{proposition}[The exceptional detour identity]\label{prop:convex-third-exception}
Assume the normalized convex quadrilateral setup. Suppose
\begin{equation}\label{eq:convex-third-identity}
r\ne0,\qquad s\ne-1,\qquad
4r^2s=-(1-s^2)^2.
\end{equation}
Then the open segment \(UV\) contains a rational point whose distances from
\(U,V,A,B\) are rational.
\end{proposition}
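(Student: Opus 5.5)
The plan is to show that, exactly as in Proposition~\ref{prop:convex-r-zero}, the intersection point \(X\) of the two diagonals \(AB\) and \(UV\) is the required point. The exceptional identity will be used to show that \(1+\alpha_1^2\) is a rational square at \(X\). The whole argument is elementary: no elliptic curve input is needed, since the identity pins down \(r\) in terms of \(s\) so tightly that the collinearity value lands in \(\calS\).

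\emph{Step 1 (arithmetic of the identity).} Put \(\rho=-s>0\). Then \eqref{eq:convex-third-identity} reads \(4r^2\rho=(1-\rho^2)^2\). Since \(r\neq0\), this gives
\[
\rho=\left(\frac{1-\rho^2}{2r}\right)^2 ,
\]
so \(\rho\) is the square of a rational number. Write \(\rho=k^2\) with \(k\in\Q_{>0}\); since \(s\ne-1\), also \(k\neq1\). Taking square roots in the identity gives
\[
r=\varepsilon\,\frac{1-k^4}{2k},\qquad \varepsilon\in\{\pm1\}.
\]

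\emph{Step 2 (the diagonal intersection).} By convexity, the diagonals meet at a point \(X=(x_0,0)\) lying in the relative interior of both \(UV\) and \(AB\). In particular \(0<x_0<D\), and \(x_0\in\Q\) because it is the intersection of two rational lines. Hence \(XU=x_0\) and \(XV=D-x_0\) are rational.

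Since \(A\), \(X\), \(B\) are collinear with \(X\) between them, the vectors \(X-A=(x_0-a,-b)\) and \(B-X=(c-x_0,d)\) are proportional. This translates into \(\alpha_2=-\alpha_1\) at \(x=x_0\). Substituting into the detour equation \eqref{eq:detour} gives \((s-1)\alpha_1=r\), that is,
\[
\alpha_1=\frac{r}{s-1}=\frac{\varepsilon(1-k^4)/(2k)}{-(1+k^2)}=\varepsilon\,\frac{k^2-1}{2k}=\varepsilon\,\frac{k-k^{-1}}{2}.
\]
By Lemma~\ref{lem:S} (with \(t=k\), together with \(\calS=-\calS\)), \(\alpha_1\in\calS\). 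Since \(\alpha_2=-\alpha_1\), also \(\alpha_2\in\calS\).

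\emph{Step 3 (conclusion).} Lemma~\ref{lem:detour} then gives \(XA,XB\in\Q\). Explicitly,
\[
XA=b\,\frac{k+k^{-1}}{2},\qquad XB=-d\,\frac{k+k^{-1}}{2}.
\]
Together with Step 2, all four distances from \(X\) to \(U,V,A,B\) are rational, and \(X\) lies on the open segment \(UV\).

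The only step needing care is Step 1: extracting that \(\sqrt{\rho}\) is rational, and keeping track of the sign \(\varepsilon\) so that the simplification \((1-k^4)/(1+k^2)=1-k^2\) comes out correctly. The sign ultimately does not matter because \(\calS=-\calS\). Nothing beyond Lemmas~\ref{lem:distance}, \ref{lem:S} and \ref{lem:detour} and the convexity of \(Q\) is needed.
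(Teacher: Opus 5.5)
Your proof is correct and is essentially the paper's argument. You take the diagonal intersection point $X$, use collinearity to get $\alpha_2=-\alpha_1=r/(s-1)$, and use the identity to show this value lies in $\calS$. The only cosmetic difference is that the paper sets $t=(1-s^2)/(2r)$ directly, which equals your $\varepsilon k$, and checks $t^2=-s$ and $(t-t^{-1})/2=r/(s-1)$, so it avoids splitting on the sign of $r$.
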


\begin{proof}
Let \(X=(x,0)\) be the intersection point of the diagonals \(AB\) and \(UV\).
Since \(Q\) is convex, \(X\) lies on the open segment \(UV\).

The line through \(A=(a,b)\) and \(B=(c,d)\) meets the \(x\)-axis at \(X\).
Using similar triangles, or equivalently the equation of the line, we get
\[
\frac{x-a}{b}=\frac{c-a}{b-d}.
\]
Thus
\[
\alpha_1:=\frac{x-a}{b}
\frac{c-a}{b-d}.
\]
Since \(s=b/d\) and \(r=(c-a)/d\), this becomes
\begin{equation}\label{eq:convex-third-alpha}
\alpha_1=\frac{r}{s-1}.
\end{equation}
At the same intersection point, the parameters along the line \(AB\) give
\begin{equation}\label{eq:convex-third-alpha2}
\alpha_2:=\frac{c-x}{d}=-\alpha_1.
\end{equation}

Define
\[
t=\frac{1-s^2}{2r}.
\]
Because \(r\ne0\) and \(s\ne-1\), and because \(s<0\), we have \(t\ne0\).
From \eqref{eq:convex-third-identity},
\[
t^2=
\frac{(1-s^2)^2}{4r^2}
s.
\]
Therefore
\[
\frac{t-t^{-1}}2
\frac{t^2-1}{2t}
\frac{-s-1}{(1-s^2)/r}
\frac{r}{s-1}
\alpha_1.
\]
By Lemma~\ref{lem:S}, \(\alpha_1\in\calS\). Since
\(\calS\) is closed under negation, \eqref{eq:convex-third-alpha2} gives
\(\alpha_2\in\calS\).
Therefore \(XA\) and \(XB\) are rational by
Lemma~\ref{lem:detour}. The point \(X\) has rational coordinates by
\eqref{eq:convex-third-alpha}, so \(XU=x\) and \(XV=D-x\) are rational. Thus \(X\) is a required
point.
\end{proof}

\subsection{Infinitude in the convex quadratic-exceptional case}

%We next focus on a useful negative result: the convex quadratic exceptional identity does not contribute finite examples.
Proposition~\ref{prop:convex-third-exception} uses only the
diagonal-intersection point, but the
Case III torsion table upgrades the conclusion to infinitely many points.

\begin{proposition}[Convex quadratic exceptional identity gives infinitely many]
\label{prop:convex-third-infinite}
Assume the normalized convex quadrilateral setup
\[
\begin{gathered}
U=(0,0),\qquad V=(D,0),\\
A=(a,b),\qquad B=(c,d),\qquad b>0>d,
\end{gathered}
\]
with
\[
s=\frac bd,\qquad r=\frac{c-a}{d}.
\]
Suppose
\[
r\ne0,\qquad s\ne-1,\qquad
4r^2s=-(1-s^2)^2.
\]
Assume also that the side lengths $UA,\ AV,\ VB,\ BU$ are rational.
Then the open diagonal segment \(UV^\circ\) contains infinitely many
rational points whose distances from \(U,V,A,B\) are rational.
\end{proposition}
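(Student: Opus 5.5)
The plan is to replace the torsion point \(R=(-1,r)\) of Theorem~\ref{thm:love}, which is useless here, by the endpoint detour solutions \(\lambda,\mu\). We show that one of them gives a \emph{non-torsion} rational point on \(E_{r,s}\). We then translate its multiples by the rational point coming from the diagonal intersection \(X\) of Proposition~\ref{prop:convex-third-exception}.

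First I would parametrize the exceptional locus. Since \(s<0\), the identity \(4r^2s=-(1-s^2)^2\) says \(-s=t^2\) with \(t=(1-s^2)/(2r)\in\Q^\times\). Hence \(s=-t^2\) and \(r=(1-t^4)/(2t)\), with \(t\ne\pm1\) because \(r\ne0\). Substituting into
\[
E_{r,s}:\ y^2=x^3+(1+r^2+s^2)x^2+s^2x
\]
gives a one-parameter family over \(\Q(t)\). On it I would compute the rational torsion subgroup, in the form recorded in the Case III table of the appendix. I would then push each torsion point through the detour parametrization to obtain the finite list of \(\alpha_1\)-values it produces. I expect the list to contain \(\alpha_X=r/(s-1)\), the value at the intersection point \(X\); this is exactly why \(R\) is torsion in this case. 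It should also contain \(\pm\infty\) and a few further explicit rational functions of \(t\), such as \(0\) and the values with \(\alpha_2=\pm\alpha_1\).

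The second step, which I expect to be the main obstacle, is to show that at least one endpoint value \(\lambda=-a/b\) or \(\mu=(D-a)/b\) is not on this torsion list. These are rational detour solutions by the rationality of \(UA,VA,UB,VB\), and convexity gives \(\lambda<\alpha_X<\mu\). So it suffices to rule out the possibility that both \(\lambda\) and \(\mu\) are torsion values lying on opposite sides of \(\alpha_X\). I would try this first by a direct case check against the explicit list. For each torsion value \(\tau(t)\ne\alpha_X\), the equality \(\lambda=\tau\) or \(\mu=\tau\) should force one of three things: a vertex \(U\) or \(V\) on the line \(AB\), which contradicts convexity and non-degeneracy; a collinearity among three vertices; or \(t=\pm1\), i.e.\ \(r=0\). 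If some torsion value survives on each side of \(\alpha_X\), I would instead use the additional sign data. Each endpoint determines a definite sign pair \((\sqrt{1+\alpha_1^2},\sqrt{1+\alpha_2^2})\) with both roots positive, and I would show that the torsion point over that \(\alpha\)-value carries the wrong sign pair.

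Granting a non-torsion rational point \(P_0\in E_{r,s}(\Q)\), the conclusion follows. The closure of \(\langle P_0\rangle\) contains the identity component \(E^0(\R)\), because \(2P_0\in E^0(\R)\) is non-torsion. Let \(T_X\) be the rational point of \(E_{r,s}\) corresponding to \(X\), on the positive-root branch used in Proposition~\ref{prop:generic}. Then the points \(T_X+2kP_0\), \(k\in\Z\), are rational and dense in the real component containing \(T_X\). Pulling them back through the detour parametrization gives infinitely many rational detour solutions with \(\alpha_1\) arbitrarily close to \(\alpha_X\), which lies in the open interval \(I\). Exactly as in the proof of Proposition~\ref{prop:generic}, Lemma~\ref{lem:detour} then produces infinitely many rational points \(P=(a+b\alpha_1,0)\) on \(UV^\circ\) with \(PA,PB\in\Q\). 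Finally, \(PU,PV\in\Q\) holds automatically.
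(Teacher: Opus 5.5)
There is a gap in your second step. Your outline is sound at both ends, but the decisive step, exhibiting a rational point of infinite order, is left unproved, and the route you sketch for it would not work.

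You predict that torsion also produces finite values such as \(0\) or \(r/(1+s)\), and you plan to exclude \(\lambda,\mu\) from these by a geometric case check. Such a check cannot succeed in general. For example, \(\lambda=0\) only says \(a=0\), i.e.\ \(A\) lies directly above \(U\), which is compatible with a convex non-degenerate quadrilateral. The sign-data fallback cannot help either. The four rational points of \(C_k\) over one \(\alpha_1\)-value are permuted by the involutions \((t,W)\mapsto(t,-W)\) and \((t,W)\mapsto(-1/t,W/t^2)\). On \(E_k\) these act as \(P\mapsto\pm P+c\) with \(c\in\{7T,3T,4T\}\), all torsion. So those four points are torsion or non-torsion together, and choosing the positive-root one changes nothing.

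The missing input is already in the paper. The Case III computation (Proposition~\ref{prop:third} and Appendix~\ref{app:caseIII}) gives \(E_k(\Q)_{\rm tors}=\langle T\rangle\cong\Z/8\Z\). Its eight points lie exactly over \(t\in\{k,-1/k\}\), i.e.\ \(\alpha_1=\tau\), and over \(t\in\{0,\infty\}\), i.e.\ \(\alpha_1=\infty\). (Your parameter \(t\) for the exceptional locus is the paper's \(\pm k\).) So \(\tau\) is the \emph{only} finite torsion value, and your extra values do not occur. For instance, \(r/(1+s)=\pm(1+k^2)/(2k)\) is not even in \(\calS\), by the same Fermat--Mordell argument that excludes extra \(2\)-torsion. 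Since \(U,V\notin AB\) gives \(\lambda,\mu\ne\tau\), both endpoint points are non-torsion at once, with no case analysis; this is exactly how the paper argues. Work on \(E_k\) with origin at the \(\tau\)-point so the table applies verbatim. Your last step only needs positive rank, which does not depend on the choice of origin.

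With that fix, your final paragraph is a valid and slightly cleaner variant of the paper's ending. The paper uses density of multiples of a non-torsion endpoint point in a one-sided neighbourhood of that endpoint, relying on \(\alpha_1\) being a real-analytic local parameter there. You instead translate \(2\Z P_0\) by the rational point over \(\tau\) and use \(\tau\in I\) (from convexity) to work in a two-sided neighbourhood of an interior value. Add only that the map to \(\alpha_1\) is at most four-to-one, so infinitely many points give infinitely many distinct \(P\).
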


\begin{proof}
Let
\[
\alpha=\frac{x-a}{b}
\]
for a point \(P=(x,0)\) on the diagonal line. As in the endpoint-line
setup, the rational-distance condition from \(A\) and \(B\) is encoded by
the detour equation
\[
s\alpha+\alpha_2=r,\qquad \alpha,\alpha_2\in\calS,
\]
where
\[
\calS=\left\{\frac{t-t^{-1}}2:\ t\in\Q^\times\right\}.
\]
The open diagonal segment corresponds to the interval
\[
I=\left(-\frac ab,\frac{D-a}{b}\right).
\]

The exceptional identity, together with \(s<0\), implies that
\(-s\) is a positive rational square. Write
\[
s=-k^2,\qquad k\in\Q_{>0},\qquad k\ne1.
\]
The identity gives
\[
r=\varepsilon\frac{1-k^4}{2k},
\qquad \varepsilon\in\{\pm1\}.
\]
Replacing \((\alpha,\alpha_2,r)\) by
\((-\alpha,-\alpha_2,-r)\) reflects the interval \(I\) to \(-I\) and
preserves the number of rational detour solutions in the interval. Thus it
is enough to prove the result in the sign convention
\[
r=\frac{1-k^4}{2k}.
\]
The other sign follows by reflecting back.
The special value is
\[
\tau=\frac r{s-1}=\frac{k^2-1}{2k}.
\]
Geometrically, \(x=a+b\tau\) is the intersection point of the line \(AB\)
with the \(x\)-axis. Because the quadrilateral is convex, the two diagonals
meet in their interiors. Hence $\tau\in I$.

Now consider the Case III genus-one curve attached to this exceptional
identity. With
\[
\alpha=\frac{t-t^{-1}}2,
\]
the condition \(\alpha_2=r-s\alpha\in\calS\) gives a quartic \(C_k\).
Using the special point corresponding to \(\alpha=\tau\) as the origin,
\(C_k\) is birational to the elliptic curve
\[
\begin{aligned}
E_k:\quad
Y^2&-2(k^4-2k^2-1)XY+8k^6(k^4-1)Y\\
&=X^3-4k^4(k^2+1)X^2 .
\end{aligned}
\]
The Case III torsion computation in Proposition~\ref{prop:third} and Appendix~\ref{app:caseIII} gives
\[
E_k(\Q)_{\rm tors}\simeq \Z/8\Z,
\]
and, more importantly for the present argument, its torsion table shows that
the only finite \(\alpha\)-value represented by rational torsion is $\alpha=\tau$.
The proof of that torsion table uses Mazur's torsion theorem and the
Fermat--Mordell quartic case to rule out extra rational \(2\)-torsion.

The two endpoint values of the interval \(I\) are
\[
\lambda=-\frac ab,\qquad
\mu=\frac{D-a}{b}.
\]
They are rational detour values because the endpoint distances $UA,\ UB,\ VA,\ VB$
are the side lengths of the quadrilateral. Moreover neither endpoint
\(U\) nor \(V\) lies on the line \(AB\), since the quadrilateral is
non-degenerate. Therefore neither endpoint value equals the intersection
value:
\[
\lambda\ne\tau,\qquad \mu\ne\tau.
\]
Consequently every rational point on the Case III curve with finite
\(\alpha\)-value different from \(\tau\) is non-torsion. Hence the rational
points corresponding to \(\lambda\) and \(\mu\) are non-torsion.

The real points of an elliptic curve form a compact one-dimensional real Lie
group, and the cyclic subgroup generated by a non-torsion point is dense in
the real component it meets. Applying this density statement to either
non-torsion endpoint point gives infinitely many rational points on the real
component containing that endpoint. At the endpoint points used here the
positive real detour branch is nonsingular, and the coordinate \(\alpha\) is
a real analytic local parameter. Hence every sufficiently small one-sided
neighborhood of the relevant endpoint inside \(I\) contains infinitely many
rational points of the Case III curve. Therefore there are infinitely many
rational values $\alpha\in I$ for which both \(\alpha\) and \(r-s\alpha\) lie in \(\calS\).

For each such \(\alpha\), set
\[
x=a+b\alpha,\qquad P=(x,0).
\]
Then \(P\in UV^\circ\). Since \(x\in\Q\) and \(D\in\Q\), the distances
\[
PU=x,\qquad PV=D-x
\]
are rational. The detour conditions
\[
\alpha,\ r-s\alpha\in\calS
\]
give \(PA,PB\in\Q\). Thus there are infinitely many rational points on
\(UV^\circ\) at rational distance from all four vertices.
\end{proof}

\begin{proposition}[The case \(s=-1\), non-symmetric]\label{prop:convex-s-minus-one-nonsym}
Assume the normalized convex quadrilateral setup. Suppose
\begin{equation}\label{eq:convex-s-minus-one-nonsym}
s=-1,\qquad a+c\ne D.
\end{equation}
Assume also that the side lengths $UA,\ AV,\ VB,\ BU$ are rational.
Then the open segment \(UV\) contains a rational point whose distances from
\(U,V,A,B\) are rational.
\end{proposition}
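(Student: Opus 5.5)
The plan is to reduce to a genus-one curve and show that at least one of the two endpoint detour points is non-torsion. Density then produces interior points, exactly as in Proposition~\ref{prop:convex-third-infinite}.

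\emph{Reduction.} First dispose of \(r=0\) by Proposition~\ref{prop:convex-r-zero}, so assume \(r\ne0\). With \(s=-1\) we have \(d=-b\), and the detour equation \eqref{eq:detour} becomes \(\alpha_2=r+\alpha_1\). So we need rational \(\alpha\in I\) with both \(1+\alpha^2\) and \(1+(\alpha+r)^2\) rational squares.

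\emph{The curve.} Substitute \(\alpha=(t-t^{-1})/2\) (Lemma~\ref{lem:S}). The second condition becomes \(w^2=q(t)\) for a quartic \(q\) with rational points. This gives a genus-one curve \(C_r\), birational to an elliptic curve \(E_r\) once a rational point is chosen as origin. Two rational points matter:
\begin{itemize}
\item the diagonal intersection point: \(AB\) meets the axis at \(x=(a+c)/2\), so \(\alpha=(c-a)/(2b)=-r/2\);
\item the endpoint values \(\lambda=-a/b\) and \(\mu=(D-a)/b\), which are rational points of \(C_r\) because the sides \(UA,AV,VB,BU\) are rational.
\end{itemize}

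\emph{Symmetry.} The involution \(\alpha\mapsto -r-\alpha\) swaps the two conditions, so it is an automorphism of \(C_r\), and \(-r/2\) is its fixed value. A direct computation gives
\[
\lambda+\mu=-r\iff \frac{D-2a}{b}=\frac{c-a}{b}\iff a+c=D.
\]
So the hypothesis \(a+c\ne D\) says exactly that the endpoints are \emph{not} exchanged by this involution.

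\emph{Torsion table (main obstacle).} I would compute \(E_r(\Q)_{\rm tors}\) in Weierstrass form, using Mazur's theorem and ruling out extra \(2\)-torsion through a quartic Diophantine argument, as in Appendix~\ref{app:caseIII}. I expect to show that the finite \(\alpha\)-values coming from torsion are only:
\begin{itemize}
\item \(-r/2\) (the point used as origin), and
\item at most one pair \(\{\alpha_0,-r-\alpha_0\}\) interchanged by the involution, coming from the sign choices of the two square roots.
\end{itemize}
Pinning down this table uniformly in \(r\) is the step I expect to be hardest. If the extra pair does not appear generically, the conclusion only gets easier.

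\emph{Conclusion of the argument.}
\begin{itemize}
\item By non-degeneracy, neither \(U\) nor \(V\) lies on \(AB\), so \(\lambda,\mu\ne -r/2\).
\item If both endpoint points were torsion, they would therefore have to form the interchanged pair, forcing \(\lambda+\mu=-r\), i.e.\ \(a+c=D\). This contradicts the hypothesis, so at least one endpoint point is non-torsion.
\item That point generates a subgroup dense in its real component. On the positive branch of both square roots, \(\alpha\) is a real-analytic local parameter near the endpoint.
\item Hence every one-sided neighbourhood of that endpoint inside \(I\) contains rational \(\alpha\) with \(\alpha,\alpha+r\in\calS\).
\item Setting \(x=a+b\alpha\), \(P=(x,0)\), Lemma~\ref{lem:detour} gives \(P\in UV^\circ\) with \(PA,PB\in\Q\), and \(PU=x\), \(PV=D-x\) are rational.
\end{itemize}

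\emph{Fallback.} If the torsion table turns out larger, I would instead check directly that \(r/2\in\calS\). In that case the intersection point \(X\) itself works, since \(X\in UV^\circ\) by convexity.
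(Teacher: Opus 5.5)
\textbf{There is a genuine gap.} Your argument rests on a torsion table that you only expect to prove, and as you have set it up it cannot be proved in that form.

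\emph{The origin is not available in general.} The value $\alpha=-r/2$ gives a rational point of $C_r$ only if $1+r^2/4$ is a rational square. Since $AB^2=b^2(r^2+4)$, this happens exactly when the other diagonal $AB$ is rational. So in general there is no rational point at $-r/2$ to use as origin.

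\emph{The $2$-torsion step would fail.} The ``quartic Diophantine argument'' excluding extra $2$-torsion fails precisely in the case where your origin exists. With $T=-r$, the curve $E_T\colon Y^2=X^3+(T^2+2)X^2+X$ has full rational $2$-torsion exactly when $T^2+4$ is a square.

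\emph{What completing this route would take.} You would need the full analysis of Proposition~\ref{prop:s-minus-one}: Mazur's theorem, the $3$-division polynomial reduced to the rank-zero curve 48a1, exclusion of cyclic $8$-torsion, and halving $(1,\pm w)$. That is how the paper proves the stronger infinitude statement, Proposition~\ref{prop:convex-s-minus-one-nonsym-infinite}. Your fallback does not close the gap: $r/2\in\calS$ is equivalent to $AB\in\Q$, which fails in general.

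\textbf{The missing idea is one you already wrote down.} The involution $\alpha\mapsto -r-\alpha$ preserves the set of rational solutions of $1+\alpha^2,\ 1+(\alpha+r)^2\in\Q^2$. The endpoint values $\lambda,\mu$ are such solutions, so
\[
-r-\lambda=\frac{c}{b}
\qquad\text{and}\qquad
-r-\mu=\frac{c-D}{b}
\]
are solutions too. These correspond to $x=a+c$ and $x=a+c-D$.

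Convexity puts $-r/2$ in $I$, i.e.\ $0<a+c<2D$. Hence $(a+c,0)\in UV^\circ$ when $a+c<D$, and $(a+c-D,0)\in UV^\circ$ when $a+c>D$. The excluded case $a+c=D$ is exactly when the involution swaps the two endpoints.

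Geometrically, this is the half-turn about $M=((a+c)/2,0)$, which exchanges $A$ and $B$. It gives $PA=UB$ and $PB=UA$, respectively $PA=VB$ and $PB=VA$.

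This is the paper's proof. It needs no elliptic curves or torsion, and it uses the non-symmetric hypothesis $a+c\ne D$ at the same point where you used it. The extra effort is worth spending only if you want infinitely many points, and then the torsion analysis must actually be done.
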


\begin{proof}
The condition \(s=-1\) means $d=-b$. The diagonal \(AB\) intersects the \(x\)-axis at
\[
M=\left(\frac{a+c}{2},0\right).
\]
Convexity implies that \(M\) lies on the open segment \(UV\). Hence
\[
0<\frac{a+c}{2}<D,
\]
or equivalently
\begin{equation}\label{eq:convex-s-minus-one-interval}
0<a+c<2D.
\end{equation}

First suppose $a+c<D$. Set $P=(a+c,0)$.
Then \eqref{eq:convex-s-minus-one-interval} gives \(0<a+c<D\), so \(P\) lies on the open segment \(UV\).
Moreover,
\[
PA^2=(a+c-a)^2+b^2=c^2+b^2=BU^2,
\]
and
\[
PB^2=(a+c-c)^2+b^2=a^2+b^2=AU^2.
\]
The distances \(BU\) and \(AU\) are side lengths of the normalized
quadrilateral, hence rational. Therefore \(PA\) and \(PB\) are rational.
Also
\[
PU=a+c,\qquad PV=D-a-c
\]
are rational.

Now suppose $a+c>D$. Set $P=(a+c-D,0)$.
Using \eqref{eq:convex-s-minus-one-interval}, we have $0<a+c-D<D$,
so \(P\) lies on the open segment \(UV\). This time,
\[
PA^2=(a+c-D-a)^2+b^2=(c-D)^2+b^2=BV^2,
\]
and
\[
PB^2=(a+c-D-c)^2+b^2=(a-D)^2+b^2=AV^2.
\]
The distances \(BV\) and \(AV\) are side lengths of the normalized
quadrilateral, hence rational. Therefore \(PA\) and \(PB\) are rational, and
\[
PU=a+c-D,\qquad PV=2D-a-c
\]
are rational. The assumption \(a+c\ne D\) leaves no other case.
\end{proof}

\subsection{\texorpdfstring{Infinitude in the convex non-symmetric \(s=-1\) case}{Infinitude in the convex non-symmetric s=-1 case}}

The next convex exceptional branch is the equal-height case \(s=-1\). The
symmetric subcase \(a+c=D\) is treated later by the symmetric-parallelogram
analysis and can have finite rank-zero examples. The non-symmetric subcase
does not contribute finite examples.

\begin{proposition}[Convex non-symmetric equal-height case gives infinitely many]
\label{prop:convex-s-minus-one-nonsym-infinite}
Assume the normalized convex quadrilateral setup
\[
U=(0,0),\qquad V=(D,0),\qquad
A=(a,b),\qquad B=(c,-b),\qquad b>0.
\]
Assume $a+c\ne D$. Assume also that the side lengths $UA,\ AV,\ VB,\ BU$ are rational.
Then the open diagonal segment \(UV^\circ\) contains infinitely many
rational points whose distances from \(U,V,A,B\) are rational.
\end{proposition}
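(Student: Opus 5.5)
The plan is to build on the explicit reflection points from Proposition~\ref{prop:convex-s-minus-one-nonsym}, but to work in the detour framework so that the arithmetic of the associated curve yields infinitely many points.

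With \(s=-1\), \(\alpha_1=(x-a)/b\), and \(\alpha_2=(c-x)/(-b)=(x-c)/b\), the condition is that both \(\alpha\) and \(\alpha-\delta\) lie in \(\calS\), where \(\delta=(c-a)/b=-r\). The open segment again corresponds to \(\alpha\in I=(\lambda,\mu)\). The solution set is parametrized by the genus-one curve
\[
C:\quad 1+\alpha^2=u^2,\qquad 1+(\alpha-\delta)^2=v^2 .
\]
This is the \(s=-1\) specialization of \(E_{r,s}\) in Theorem~\ref{thm:love}. If \(r=0\), the case is already covered by Proposition~\ref{prop:convex-r-zero}, though infinitude still needs an argument; I would treat that subcase the same way, noting that \(r=0\) with \(s=-1\) forces \(a=c\). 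Then \(a+c\ne D\) means \(2a\ne D\), and the argument below still applies. I will record explicit rational points on \(C\) with \(\alpha\)-coordinates \(\lambda\), \(\mu\), \(\lambda+\delta\), \(\mu+\delta\) and \(\delta-\lambda\), \(\delta-\mu\). These come from the four side lengths together with the symmetries \(\alpha\mapsto\delta-\alpha\) and the sign changes of \(u,v\). Among them are the points \(P=(a+c,0)\) and \(P=(a+c-D,0)\) from the earlier proposition.

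Next I would put \(C\) in Weierstrass form, taking as origin a point at infinity or the point \(\alpha=\lambda\). The involution \(\alpha\mapsto\delta-\alpha\), which swaps \(u\) and \(v\), acts on \(E\) as \(Q\mapsto Q_0-Q\) for a fixed point \(Q_0\). The key step is to show that some rational point coming from the endpoint data has infinite order. The approach is to determine the rational torsion of \(E\) in the family \(s=-1\), parametrized by \(\delta\), in the same way the paper handles Case III and the aligned case. That means listing the finite \(\alpha\)-values of torsion points, which I expect to be \(\alpha\in\{\delta/2\}\) together with \(\alpha\)-values determined by \(\delta\) alone, such as \(0\), \(\delta\), or \(\pm\infty\). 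Mazur's theorem bounds the possible torsion, and a 2-division and 4-division computation pins it down.

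The hypothesis \(a+c\ne D\) is exactly what prevents the endpoint values from landing on this torsion list. The symmetric condition \(a+c=D\) says that \(\lambda+\mu=\delta\), so the involution exchanges the two endpoint points and they may both be torsion, as in a rank-zero parallelogram. In the non-symmetric case, I would argue that if both the \(\lambda\)-point and the \(\mu\)-point were torsion, the finite torsion table would force \(\lambda+\mu=\delta\) or a degeneracy excluded by non-degeneracy and convexity, since \(\lambda<\delta/2<\mu\) by the midpoint \(M\) lying on \(UV^\circ\). So at least one endpoint point is non-torsion. This torsion-table computation is the main obstacle. If it proves too messy, I would fall back on the difference of the \(\lambda\)- and \(\mu\)-points, or on the interior point \((a+c,0)\), showing one of these is non-torsion by a specialization argument.

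Finally, as in Proposition~\ref{prop:convex-third-infinite}, the multiples of a non-torsion point are dense in its real component. Near the nonsingular endpoint, \(\alpha\) is a local analytic parameter on the positive branch, so infinitely many rational \(\alpha\) fall inside \(I\). For each such \(\alpha\), the point \(x=a+b\alpha\) gives \(P=(x,0)\) with all four distances rational by Lemma~\ref{lem:detour}.
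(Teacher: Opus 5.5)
Your architecture matches the paper's proof. With $T=\delta$ you pass to $E_T\colon Y^2=X^3+(T^2+2)X^2+X$. You argue that two torsion endpoint points on opposite sides of $\delta/2$ must form a pair symmetric about $\delta/2$, so $\lambda+\mu=\delta$, i.e.\ $a+c=D$, and you finish by density near a non-torsion endpoint.

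\textbf{The gap.} The one step with real content, the torsion-value description, is anticipated rather than proved, and the anticipated list is wrong. For $\delta\ne0$ the values $\alpha=0$ and $\alpha=\delta$ are never torsion values. Finite torsion values exist only when $\delta^2+4=w^2$ is a square, and then they lie in $\{\delta/2,(\delta+w)/2,(\delta-w)/2\}$; the last two come from rational halves of $(1,\pm w)$. The paper's proof of this proposition imports that table from Proposition~\ref{prop:s-minus-one}. There it rests on three steps: excluding $3$-torsion via the curve $48$a4, excluding cyclic $\Z/8\Z$ because $(X+1)^4+4T^2X^2\ne0$ over $\Q$, and halving $(1,\pm w)$ explicitly.

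Your fallbacks do not replace this. A specialization argument shows non-torsion on all but finitely many fibres, but it cannot by itself certify a prescribed fibre. The point $(a+c,0)$ has $\alpha=\delta-\lambda$. It is therefore the image of the $\lambda$-point under your involution $Q\mapsto Q_0-Q$, where $Q_0$ is a point at infinity and hence torsion. So it is torsion exactly when the $\lambda$-point is, and gives no new information.

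\textbf{A way to close it without the full table.} A counting argument suffices.
\begin{itemize}
\item The four rational points of $C$ at infinity map to $\calO,(0,0),(-1,\pm T)$ on $E_T$, which are torsion. Hence the sign changes of $u,v$ and the involution $\alpha\mapsto\delta-\alpha$ all preserve torsion.
\item Over each finite rational $\alpha$ on $C$ there are exactly four rational points.
\item If both endpoint points were torsion, every point over $\lambda,\mu,\delta-\lambda,\delta-\mu$ would be torsion.
\item These four values are pairwise distinct, because $\lambda,\mu\ne\delta/2$ (non-degeneracy) and $\lambda+\mu\ne\delta$ (that is, $a+c\ne D$).
\item That gives $16+4=20$ rational torsion points, contradicting Mazur's bound $\#E_T(\Q)_{\rm tors}\le16$.
\end{itemize}
The side lengths give exactly these four values. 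The values $\lambda+\delta$ and $\mu+\delta$ in your list are not rational points of $C$ in general.

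\textbf{The case $\delta=0$.} Your treatment here fails. The two square conditions coincide, $C$ splits into the two conics $u=\pm v$, and $E_T$ is singular because its discriminant $16T^2(T^2+4)$ vanishes. No genus-one argument applies. Infinitude is instead immediate from the density of $\calS$, as in the paper.
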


\begin{proof}
For \(P=(x,0)\), put
\[
\alpha=\frac{x-a}{b}.
\]
Then \(P\in UV^\circ\) if and only if
\[
\alpha\in I:=
\left(-\frac ab,\frac{D-a}{b}\right).
\]
Since \(s=-1\), the detour equation has the form
\[
\alpha_2=\alpha-T,
\]
where
\[
T=-r=\frac{c-a}{b}.
\]
If \(T=0\), then the condition becomes simply $\alpha\in\calS$.
Since \(\calS\) is dense in \(\R\), the interval \(I\) contains infinitely
many such rational values. For each of them,
\[
P=(a+b\alpha,0)
\]
lies in \(UV^\circ\), has rational distances to \(A\) and \(B\), and has
rational distances to \(U\) and \(V\). Hence the proposition holds in this
case.

Assume from now on that \(T\ne0\).
Thus the rational-distance condition from \(A\) and \(B\) is
$\alpha\in\calS,\ \alpha-T\in\calS$. The central value is $\tau=T/2$.
Geometrically, \(x=a+b\tau\) is the intersection of the line \(AB\) with the
\(x\)-axis. Since the quadrilateral is convex, the two diagonals meet in
their interiors, so $\tau\in I$.

The genus-one curve for this simultaneous condition is birational to
\[
E_T:\quad Y^2=X^3+(T^2+2)X^2+X.
\]
The torsion-value calculation in Proposition~\ref{prop:s-minus-one} gives the
following description of finite \(\alpha\)-values represented by rational
torsion points on \(E_T\). The central value \(T/2\) can occur, and if
additional torsion values occur, they occur as a symmetric pair $\eta,\ T-\eta$,
one on each side of \(T/2\). In particular, there is at most one torsion
value on each side of \(T/2\).

The endpoint values of \(I\) are
\[
\lambda=-\frac ab,\qquad
\mu=\frac{D-a}{b}.
\]
They are rational detour values because the four endpoint distances
$UA,\ UB,\ VA,\ VB$ are the side lengths of the quadrilateral. Since the quadrilateral is
non-degenerate, neither endpoint lies on the line \(AB\), and hence
\[
\lambda\ne\frac T2,\qquad
\mu\ne\frac T2.
\]

Suppose, for contradiction, that both endpoint points were torsion. Since
\(\tau=T/2\) lies strictly between \(\lambda\) and \(\mu\), the torsion-value
description forces the two endpoint values to be the symmetric torsion pair:
\[
\{\lambda,\mu\}=\{\eta,T-\eta\}.
\]
Therefore $\lambda+\mu=T$.
But
\[
\lambda+\mu
\frac ab+\frac{D-a}{b}
\frac{D-2a}{b},
\]
whereas
\[
T=\frac{c-a}{b}.
\]
Thus the equality \(\lambda+\mu=T\) implies $D-2a=c-a$, or equivalently $a+c=D$.
This contradicts the non-symmetric hypothesis \(a+c\ne D\).

Hence the two endpoint points cannot both be torsion. At least one endpoint
point is non-torsion. By density of the cyclic subgroup generated by a
non-torsion point on the relevant real elliptic component, the component
containing that endpoint contains infinitely many rational multiples of the
endpoint point. At the endpoint points used here the positive real detour
branch is nonsingular, and the coordinate \(\alpha\) is a real analytic
local parameter. Therefore every sufficiently small one-sided neighborhood
of the relevant endpoint inside \(I\) contains infinitely many rational
detour values satisfying the simultaneous condition. For each such
\(\alpha\), the point $P=(a+b\alpha,0)$
lies in \(UV^\circ\), has rational distances to \(A\) and \(B\), and has
rational distances to \(U\) and \(V\) because \(x=a+b\alpha\in\Q\) and
\(D\in\Q\). Therefore there are infinitely many required points on
\(UV^\circ\).
\end{proof}

\section{The symmetric convex parallelogram case}
\label{sec:convex-symmetric}

It remains to handle the exceptional situation
\begin{equation}\label{eq:symmetric-exceptional}
s=-1,\qquad a+c=D.
\end{equation}
Then \(d=-b\) and \(c=D-a\), so the normalized vertices are
\[
U=(0,0),\qquad V=(D,0),\qquad
A=(a,b),\qquad B=(D-a,-b),
\qquad b>0.
\]
This is a parallelogram symmetric about the midpoint of \(UV\).

\begin{proposition}[Symmetric convex parallelogram case]\label{prop:convex-symmetric}
Assume the symmetric convex parallelogram setup above, and assume that the side
lengths $UA,\ AV,\ VB,\ BU$
are rational. Then the open segment \(UV\) contains a rational point whose
distances from \(U,V,A,B\) are rational. If the other diagonal \(AB\) is not
rational, then there are infinitely many such rational points.
\end{proposition}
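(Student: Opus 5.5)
The plan is to split on whether the other diagonal \(AB\) has rational length. In that dichotomy the midpoint \(M=(D/2,0)\) handles the rational case and an elliptic-curve density argument handles the irrational case, which is the second assertion of the proposition.

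\textbf{Step 1: the midpoint.} In the symmetric setup \(B=(D-a,-b)\), so the reflection \(x\mapsto D-x\) of the \(x\)-axis exchanges the distances to \(A\) and to \(B\). Because \(AB\) passes through \(M\) and \(M\) is the common midpoint of both diagonals, we get
\[
MA^2=\Bigl(\tfrac D2-a\Bigr)^2+b^2=MB^2=\Bigl(\tfrac{AB}{2}\Bigr)^2 .
\]
If \(AB\in\Q\), then \(M\) is a rational point on the open segment \(UV\) with \(MU=MV=D/2\) and \(MA=MB=AB/2\), all rational. This settles existence in that case.

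\textbf{Step 2: the detour equation and the interval.} Put \(\alpha=(x-a)/b\). As in Proposition~\ref{prop:convex-s-minus-one-nonsym-infinite}, the conditions \(PA,PB\in\Q\) become
\[
\alpha\in\calS,\qquad \alpha-T\in\calS,\qquad T=\frac{c-a}{b}=\frac{D-2a}{b}.
\]
The interval is \(I=(\lambda,\mu)\) with \(\lambda=-a/b\) and \(\mu=(D-a)/b\), and here \(\lambda+\mu=T\). So the two endpoint values form exactly a symmetric pair \(\{\eta,T-\eta\}\) about the central value \(\tau=T/2\). This is why the argument of Proposition~\ref{prop:convex-s-minus-one-nonsym-infinite} breaks down: it cannot exclude that both endpoints are torsion. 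The central value \(\tau\) corresponds to \(M\). By Step 1, \(\tau\in\calS\) (equivalently \(\tau-T=-\tau\in\calS\)) holds if and only if \(AB\in\Q\). The case \(T=0\) is impossible, since it would force \(a=D/2\) and then \(A,M,B\) would be vertical with \(U,V\) off that line; even so, that case would be trivially dense as in the earlier proof.

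\textbf{Step 3: the case \(AB\notin\Q\), which I expect to be the main obstacle.} I would work on the curve \(E_T: Y^2=X^3+(T^2+2)X^2+X\) and use the torsion description from Proposition~\ref{prop:s-minus-one}. The claim to prove is that a symmetric torsion pair \(\{\eta,T-\eta\}\) can occur only together with the central torsion value \(T/2\). Concretely, I expect the pair to come from points \(\pm P\) (or \(P\) and \(P+T_0\) for a \(2\)-torsion point \(T_0\)) whose double, or whose sum with a \(2\)-torsion point, is the point with \(\alpha=T/2\). The first thing I would try is to compute the duplication or translation map explicitly on the \(\alpha\)-coordinate, using the involution \(\alpha\mapsto T-\alpha\) realized as translation by a rational \(2\)-torsion point. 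If that claim holds, then \(AB\notin\Q\) means \(T/2\) is not represented, so no finite \(\alpha\)-value is represented by torsion except possibly values that are now excluded. Hence the endpoint point at \(\lambda\), which is rational because \(UA,UB\in\Q\), is non-torsion.

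\textbf{Step 4: density and conclusion.} Once the endpoint point is known to be non-torsion, I would repeat the density argument of Proposition~\ref{prop:convex-s-minus-one-nonsym-infinite}. The multiples of a non-torsion point are dense in its real component. The positive detour branch is nonsingular at the endpoint, with \(\alpha\) as a local analytic parameter. Together these give infinitely many rational \(\alpha\in I\) with \(\alpha,\alpha-T\in\calS\). Each such \(\alpha\) gives \(P=(a+b\alpha,0)\in UV^\circ\) with all four distances rational. Combined with Step 1, this proves existence in every case and infinitude when \(AB\) is irrational.
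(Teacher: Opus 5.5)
Your overall plan matches the paper's: the midpoint when $AB\in\Q$, and for $AB\notin\Q$ a non-torsion point on $E_T$ over the endpoint value $\lambda$, followed by density. The gap is Step~3. You leave it as a conditional expectation, and the route you sketch cannot close it.

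In the paper's coordinates the lift $(t,u)\mapsto(-u,-t)$ of $\alpha\mapsto T-\alpha$ is simply $P\mapsto -P$ on $E_T$. The other lifts are $P\mapsto -P+(0,0)$ and $P\mapsto P+(-1,\pm T)$, so none is a translation by $2$-torsion. Hence the pairing $\eta\leftrightarrow T-\eta$ of torsion values is automatic and says nothing about $T/2$. For instance, a rational point $P$ of order $3$ would give a symmetric pair $\alpha(P),\ \alpha(-P)=T-\alpha(P)$ with no doubling relation to the fibre over $T/2$.

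Your guess is right when the pair does occur: in the proof of Proposition~\ref{prop:s-minus-one} it comes from halves of $(1,\pm w)$, which lie over $T/2$. But showing nothing else occurs is arithmetic, not formal. You need
\[
E_T(\Q)_{\mathrm{tors}}=\{\calO,(0,0),(-1,\pm T)\}\qquad\text{when } T^2+4\notin(\Q^\times)^2 .
\]
The argument runs as follows:
\begin{itemize}
\item With a point of order $4$ and only one rational $2$-torsion point, Mazur leaves $\Z/4\Z$, $\Z/8\Z$, $\Z/12\Z$.
\item $\Z/8\Z$ is excluded because halving $(-1,\pm T)$ forces $(X+1)^4+4T^2X^2=0$, which has no rational solution.
\item $\Z/12\Z$ is excluded because a rational root of $\psi_3$ gives a rational point on the rank-zero curve $48a1$. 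That curve only yields $X\in\{0,-1,1/3\}$, and none of these is a root of $\psi_3$.
\end{itemize}
This is Lemma~\ref{lem:symmetric-torsion}. It is also exactly the non-square branch of the proof of Proposition~\ref{prop:s-minus-one}, which you already cite. There the pair $(T\pm w)/2$ exists only when $T^2+4=w^2$, and otherwise no torsion point lies over a finite $\alpha$. So your claim can be read off from that proof rather than re-derived.

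With that in hand, finish as in Lemma~\ref{lem:symmetric-endpoint}. The four torsion points are exactly the points of $E_T$ over $\alpha=\infty$. Concretely, the image $R_0$ of the endpoint point is affine, has $X(R_0)\neq-1$ since $T\neq0$, and has $Y(R_0)\neq0$ since $t_0,u_0>0$. So the point over the finite value $\lambda$ is non-torsion, and your Step~4 applies.

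One small correction in Step~2: $T=0$ is not geometrically impossible, since it is just a rhombus. Rather, $T=0$ gives $AB=2b\in\Q$, so it never arises in the case $AB\notin\Q$.
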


\begin{lemma}[Symmetric line reduction]\label{lem:symmetric-line-reduction}
Assume the symmetric convex parallelogram setup. Put
\[
p=\frac ab,\qquad
q=\frac{D-a}{b},\qquad
T=q-p=\frac{D-2a}{b}.
\]
For a point \(P=(x,0)\), set
\[
\alpha=\frac{x-a}{b}.
\]
Then
\begin{equation}\label{eq:symmetric-interval}
0<x<D
\quad\Longleftrightarrow\quad
-p<\alpha<q.
\end{equation}
Moreover \(PA\) and \(PB\) are rational if and only if $1+\alpha^2, \ 1+(\alpha-T)^2$
are squares in \(\Q\). If the other diagonal \(AB\) has rational length,
then the midpoint of \(UV\) is a required point. If \(AB\notin\Q\), then
\begin{equation}\label{eq:symmetric-nonrational-diagonal}
T^2+4\notin(\Q^\times)^2.
\end{equation}
In particular \(T\ne0\).
\end{lemma}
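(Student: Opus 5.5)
The plan is to verify each assertion of Lemma~\ref{lem:symmetric-line-reduction} by direct computation in the normalized coordinates
\[
U=(0,0),\qquad V=(D,0),\qquad A=(a,b),\qquad B=(D-a,-b),\qquad b>0 .
\]
In this setup \(b\) is a positive rational, so every distance-squared identity below can be divided through by \(b^2\) without changing rationality. No elliptic-curve input is needed here; the lemma is only the translation step before the arithmetic analysis.

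\emph{The interval.} For \(P=(x,0)\) we have \(x=a+b\alpha\). Because \(b>0\), the chain \(0<x<D\) is equivalent to \(-a/b<\alpha<(D-a)/b\), which is \(-p<\alpha<q\). This is \eqref{eq:symmetric-interval}, exactly as in Lemma~\ref{lem:detour}.

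\emph{The two distances.} First, \(PA^2=(x-a)^2+b^2=b^2(1+\alpha^2)\). Second, since \(x-(D-a)=b\alpha+2a-D=b(\alpha-T)\),
\[
PB^2=b^2\bigl(1+(\alpha-T)^2\bigr).
\]
Since \(b\in\Q^\times\), \(PA\) and \(PB\) are rational if and only if \(1+\alpha^2\) and \(1+(\alpha-T)^2\) are rational squares. This is Lemma~\ref{lem:distance} applied to \(A\) and to \(B\), using \(\calS=-\calS\).

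\emph{The other diagonal.} Compute
\[
AB^2=(D-2a)^2+4b^2=b^2(T^2+4).
\]
Hence \(AB\in\Q\) if and only if \(T^2+4\) is a rational square. Since \(T^2+4>0\), this is the same as \(T^2+4\in(\Q^\times)^2\).

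\emph{The case \(AB\in\Q\).} Take the midpoint \(M=(D/2,0)\). It lies on the open segment \(UV\) and has \(MU=MV=D/2\in\Q\). Its parameter is \(\alpha=(D/2-a)/b=T/2\), so \(\alpha-T=-T/2\). Both \(1+\alpha^2\) and \(1+(\alpha-T)^2\) then equal \((T^2+4)/4\), which is a rational square. Therefore \(M\) is a required point. Geometrically, \(M\) is the center of symmetry and \(MA=MB=AB/2\).

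\emph{The case \(AB\notin\Q\).} By the computation of \(AB^2\), \(T^2+4\) is not a rational square, which is \eqref{eq:symmetric-nonrational-diagonal}. Finally, if \(T=0\) then \(T^2+4=4=2^2\), a contradiction, so \(T\neq0\).

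The only step needing care is the sign bookkeeping in \(PB^2\), namely checking that \(x-(D-a)\) equals \(b(\alpha-T)\) and not \(b(\alpha+T)\). Otherwise the proof is routine.
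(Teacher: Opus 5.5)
Your proposal is correct and follows the paper's proof essentially step by step. It uses the same substitution \(x=a+b\alpha\), the same identities \(PA^2=b^2(1+\alpha^2)\), \(PB^2=b^2\bigl(1+(\alpha-T)^2\bigr)\) and \(AB^2=b^2(T^2+4)\), and the same contradiction \(T^2+4=4\) when \(T=0\). The only cosmetic difference is in the midpoint case: you check it through \(\alpha=T/2\) and the square criterion, while the paper notes that \((D/2,0)\) is the common midpoint of both diagonals, so that \(PA=PB=AB/2\), an observation you also make.
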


\begin{proof}
For \(P=(x,0)\) and \(\alpha=(x-a)/b\), we have $x=a+b\alpha$.
Since \(b>0\), this gives \eqref{eq:symmetric-interval}. Also
\[
PA^2=(x-a)^2+b^2=b^2(1+\alpha^2),
\]
and
\[
\begin{aligned}
PB^2
&=(x-(D-a))^2+b^2  \\
&=b^2\bigl(1+(\alpha-T)^2\bigr),
\end{aligned}
\]
because
\[
\alpha-T=\frac{x-a}{b}-\frac{D-2a}{b}
=\frac{x-(D-a)}{b}.
\]
As \(b\in\Q^\times\), the rationality of \(PA\) and \(PB\) is exactly the
rational-square condition in the statement.

If \(AB\in\Q\), the midpoint
\[
P=\left(\frac D2,0\right)
\]
is the common midpoint of the two diagonals of the parallelogram. Hence
\[
PU=PV=\frac D2,
\qquad
PA=PB=\frac{AB}{2},
\]
so all four distances are rational.

Finally,
\[
AB^2=(D-2a)^2+(2b)^2=b^2(T^2+4).
\]
Since \(b\in\Q^\times\), the assumption \(AB\notin\Q\) is equivalent to
\eqref{eq:symmetric-nonrational-diagonal}. If \(T=0\), then \(T^2+4=4\) is
a rational square, contradicting \eqref{eq:symmetric-nonrational-diagonal}.
Thus \(T\ne0\).
\end{proof}

\begin{lemma}[The symmetric detour curve]\label{lem:symmetric-detour-curve}
Assume \(T\in\Q^\times\). The simultaneous rational-square conditions
\[
1+\alpha^2\in(\Q^\times)^2,
\qquad
1+(\alpha-T)^2\in(\Q^\times)^2
\]
are represented by rational points on
\begin{equation}\label{eq:gamma-T}
\Gamma_T:\quad (t-u)(tu+1)=2Ttu.
\end{equation}
through
\[
\alpha=\frac{t-t^{-1}}2,
\qquad
\alpha-T=\frac{u-u^{-1}}2,
\qquad t,u\in\Q^\times.
\]
The curve \(\Gamma_T\) is birational over \(\Q\) to
\begin{equation}\label{eq:E-T}
E_T:\quad Y^2=X^3+(T^2+2)X^2+X.
\end{equation}
The birational map \(\phi:\Gamma_T\dashrightarrow E_T\) is
\begin{equation}\label{eq:symmetric-map}
X=-1-\frac{2T}{t-u-2T},
\qquad
Y=\frac{T(t+u)}{t-u-2T},
\end{equation}
and the inverse map, on \(X\ne-1\), is
\begin{equation}\label{eq:symmetric-inverse}
t=\frac{TX-Y}{X+1},
\qquad
u=-\frac{TX+Y}{X+1}.
\end{equation}
\end{lemma}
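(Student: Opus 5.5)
The plan is to verify the three assertions of Lemma~\ref{lem:symmetric-detour-curve} directly: the parametrization gives points of \(\Gamma_T\), the formula \eqref{eq:symmetric-inverse} sends \(E_T\) into \(\Gamma_T\), and \eqref{eq:symmetric-map} is its inverse on suitable open sets. Everything is rational-function algebra over \(\Q\) together with Lemma~\ref{lem:S}.

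\emph{Step 1: the curve \(\Gamma_T\).} By Lemma~\ref{lem:S}, \(1+\alpha^2\) is a rational square exactly when \(\alpha=(t-t^{-1})/2\) for some \(t\in\Q^\times\). Likewise \(1+(\alpha-T)^2\) is a rational square exactly when \(\alpha-T=(u-u^{-1})/2\) for some \(u\in\Q^\times\). So the two square conditions hold if and only if such \(t,u\) exist. Subtracting the two parametrizations gives
\[
2T=(t-u)-(t^{-1}-u^{-1})=(t-u)\Bigl(1+\frac1{tu}\Bigr).
\]
Multiplying by \(tu\ne0\) yields \((t-u)(tu+1)=2Ttu\), which is \eqref{eq:gamma-T}. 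Conversely, any rational point of \(\Gamma_T\) with \(tu\ne0\) gives back \(\alpha\) and \(\alpha-T\) in \(\calS\). The correspondence is not one-to-one, since \(t\) and \(-t^{-1}\) give the same \(\alpha\). This is why the lemma says ``represented by'' rather than claiming a bijection.

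\emph{Step 2: \eqref{eq:symmetric-inverse} maps \(E_T\) into \(\Gamma_T\).} Starting from \eqref{eq:symmetric-inverse} on \(X\ne-1\), I will compute the symmetric functions
\[
t-u=\frac{2TX}{X+1},\qquad t+u=-\frac{2Y}{X+1},\qquad tu=\frac{Y^2-T^2X^2}{(X+1)^2}.
\]
Substituting these into \eqref{eq:gamma-T} and clearing the factor \(2T/(X+1)^3\), which is allowed because \(T\ne0\), gives
\[
X\bigl(Y^2-T^2X^2+(X+1)^2\bigr)=(X+1)(Y^2-T^2X^2).
\]
This simplifies to \(Y^2=X(X+1)^2+T^2X^2=X^3+(T^2+2)X^2+X\), which is exactly \eqref{eq:E-T}. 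Hence every point of \(E_T\) with \(X\ne-1\) maps to a point of \(\Gamma_T\).

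\emph{Step 3: \eqref{eq:symmetric-map} is the inverse.} From the expression for \(t-u\) in Step 2,
\[
t-u-2T=-\frac{2T}{X+1},
\]
so \(X=-1-2T/(t-u-2T)\). Then \(Y=-(t+u)(X+1)/2=T(t+u)/(t-u-2T)\). So \(\phi\) recovers \((X,Y)\) from \((t,u)\), which is \eqref{eq:symmetric-map}. For the other composite, applying \eqref{eq:symmetric-inverse} to \(\phi(t,u)\) returns \(t\) and \(u\) via \(t=\tfrac12\bigl((t+u)+(t-u)\bigr)\) and \(u=\tfrac12\bigl((t+u)-(t-u)\bigr)\).

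The main point needing care is bookkeeping of the exceptional loci. These are \(X=-1\), the line \(t-u=2T\), and the points with \(t=0\) or \(u=0\). I need to check that each locus is a proper Zariski-closed subset, so that the two maps really are mutually inverse on dense open subsets. Both maps are defined over \(\Q\) because \(T\in\Q^\times\). So the composites are the identity on dense open sets and \(\phi\) is birational over \(\Q\).
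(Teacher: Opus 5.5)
Your proposal is correct and follows essentially the same route as the paper: you parametrize via Lemma~\ref{lem:S}, subtract to get \(\Gamma_T\), and verify the explicit maps by substitution. The paper instead substitutes the forward map into \(E_T\), obtaining \(4T^2\bigl((t-u)(tu+1)-2Ttu\bigr)/(t-u-2T)^3\), whereas your organization through \(t\pm u\) and \(tu\) makes both compositions immediate.

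The exceptional-locus check you leave open is settled in the paper as you would expect. Substituting \(t-u=2T\) into \(\Gamma_T\) gives \(2T(tu+1)=2Ttu\), which is impossible for \(T\ne0\), so \(\phi\) is defined at every finite point. Note also that your Step~2 manipulations are reversible, and that reversibility is what gives \(\phi(\Gamma_T)\subset E_T\). Finally, the paper additionally records \(\Delta=16T^2(T^2+4)\ne0\) to certify that \(E_T\) is an elliptic curve.
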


\begin{proof}
By Lemma~\ref{lem:S}, the condition that \(1+\alpha^2\) is a rational square
is equivalent to writing
\[
\alpha=\frac{t-t^{-1}}2
\]
for some \(t\in\Q^\times\). Applying the same parametrization to
\(\alpha-T\) gives
\[
\alpha-T=\frac{u-u^{-1}}2
\]
with \(u\in\Q^\times\). Subtracting these two equations gives
\eqref{eq:gamma-T}. Conversely, any rational \((t,u)\in\Gamma_T\) with
\(t,u\ne0\) gives such an \(\alpha\), and then
\[
1+\alpha^2=\left(\frac{t+t^{-1}}2\right)^2,
\qquad
1+(\alpha-T)^2=\left(\frac{u+u^{-1}}2\right)^2.
\]

The discriminant of \(E_T\) is $16T^2(T^2+4)$,
which is nonzero because \(T\ne0\) and \(T^2+4>0\). Hence \(E_T\) is a
nonsingular elliptic curve over \(\Q\). A direct substitution of
\eqref{eq:symmetric-map} gives
\begin{equation}\label{eq:symmetric-substitution}
\begin{aligned}
&Y^2-\bigl(X^3+(T^2+2)X^2+X\bigr)\\
&\qquad =
\frac{4T^2\bigl((t-u)(tu+1)-2Ttu\bigr)}
{(t-u-2T)^3}.
\end{aligned}
\end{equation}
Thus \eqref{eq:symmetric-map} sends \(\Gamma_T\) to \(E_T\) wherever it is defined. If
\(t-u-2T=0\), then \(t-u=2T\), and substituting in \eqref{eq:gamma-T} gives $2T(tu+1)=2Ttu$,
impossible because \(T\ne0\). Hence \eqref{eq:symmetric-map} is defined at every finite point
of \(\Gamma_T\).

Substituting \eqref{eq:symmetric-inverse} into \eqref{eq:gamma-T}, and using
\eqref{eq:E-T}, gives the equation of
\(\Gamma_T\). The formulas \eqref{eq:symmetric-map} and
\eqref{eq:symmetric-inverse} are inverse on the indicated
open sets. Therefore \(\Gamma_T\) and \(E_T\) are birational over \(\Q\).
\end{proof}

\begin{lemma}[Torsion in the nonrational-diagonal subcase]
\label{lem:symmetric-torsion}
Assume \(T\in\Q^\times\) and $T^2+4\notin(\Q^\times)^2$. Then
\begin{equation}\label{eq:symmetric-torsion-group}
E_T(\Q)_{\mathrm{tors}}
\{\calO,\ (0,0),\ (-1,T),\ (-1,-T)\}.
\end{equation}
\end{lemma}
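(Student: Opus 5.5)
We first check that the four listed points really are torsion points. Write \(E_T: Y^2=X(X^2+(T^2+2)X+1)\). The point \((0,0)\) is rational 2-torsion. For \((-1,\pm T)\), substituting \(X=-1\) gives \(-1+(T^2+2)-1=T^2\), so both points lie on the curve. To see that they have order \(4\), we use the standard criterion for curves \(y^2=x(x^2+ax+b)\): a point \(P\) satisfies \(2P=(0,0)\) exactly when \(x(P)^2=b\) and \(x(P)^2+ax(P)+b\) is a square. Here \(b=1\) and \(x=-1\), so the criterion holds, or one can simply compute the tangent line at \((-1,T)\) directly. Hence \(\{\calO,(0,0),(-1,\pm T)\}\) is a cyclic subgroup of order \(4\), and it remains to show that there is no further rational torsion.

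\emph{Two-torsion.} The other two 2-torsion points have \(x\)-coordinates equal to the roots of \(X^2+(T^2+2)X+1\). Its discriminant is \((T^2+2)^2-4=T^2(T^2+4)\). By hypothesis \(T^2+4\) is not a rational square, so these roots are irrational and \(E_T(\Q)[2]=\Z/2\Z\). By Mazur's theorem the torsion group is therefore cyclic, \(\Z/N\Z\), with \(N\in\{4,6,8,10,12\}\) and \(4\mid N\). So \(N\in\{4,8,12\}\), and we must rule out \(N=8\) and \(N=12\).

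\emph{Excluding \(\Z/8\Z\).} If there were a point of order \(8\), it would be a rational \(Q\) with \(2Q=(-1,T)\) or \(2Q=(-1,-T)\). We apply halving to \((-1,\pm T)\). Translate \(X'=X+1\) so that the point sits at \(X'=0\), and write the cubic as \((X'-e_1)(X'-e_2)(X'-e_3)\). Halvability requires each \(-e_i\) to be a rational square, since \(x(P)-e_i\) must be a square for every root \(e_i\). One root is \(e_1=1\) (the image of \(X=0\)), and this already forces \(-1\) to be a square, which is false. If the roles get swapped, the equivalent condition is that \(-1-0=-1\) must be a square. Either way \((-1,\pm T)\notin 2E_T(\Q)\), so there is no point of order \(8\).

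\emph{Excluding \(\Z/12\Z\).} It suffices to exclude rational 3-torsion. The 3-division polynomial is
\[
\psi_3=3X^4+4(T^2+2)X^3+6X^2-1 .
\]
We need a rational root \(X\) for which \(X(X^2+(T^2+2)X+1)\) is a square. The route we would follow is to rule out both \(\Z/12\Z\) and \(\Z/8\Z\) together via the known Kubert parametrizations. Our curve has the point \((0,0)\) and points of order \(4\) with rational \(x\)-coordinate, so it lies in the family \(y^2=x(x^2+ax+b)\) with \(b\) a square. Matching \(E_T\) to Kubert's \(\Z/12\Z\) family would force a rational point on a genus-zero or genus-one auxiliary curve, and we would show this forces \(T^2+4\) to be a square, contradicting the hypothesis. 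If that matching becomes messy, the fallback is to take a rational root \(X_0=m/n\) of \(\psi_3\) and bound it by the rational root theorem after clearing the denominators of \(T\). That fallback is the main obstacle, since \(T\) is arbitrary. A cleaner alternative is to specialize modulo a prime of good reduction and count points, but this does not work uniformly in \(T\). I expect this 3-torsion step to be the hardest; the others are routine. The result is \(N=4\) and the stated group.
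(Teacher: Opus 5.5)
Your checks of the order-$4$ points, the $2$-torsion count, the Mazur reduction to $\Z/4\Z$, $\Z/8\Z$, $\Z/12\Z$, and the exclusion of $\Z/8\Z$ are correct and match the paper in substance. Two small repairs are needed:
\begin{itemize}
\item The criterion for $2P=(0,0)$ on $y^2=x(x^2+ax+b)$ is $x(P)^2=b$ together with $x(P)\bigl(x(P)^2+ax(P)+b\bigr)$ being a square. As you stated it, with $x(P)^2+ax(P)+b=-T^2$, it would fail for your own point.
\item In the $\Z/8\Z$ step, the condition $x(P)-e_i\in\Q^{\times 2}$ is only available for the rational root, not the two irrational ones. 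The rational root suffices, because $X(2R)=\bigl((X^2-1)/(2Y)\bigr)^2$ on $E_T$, so $-1$ is never the $X$-coordinate of a double. This is equivalent to the paper's identity $X(2R)+1=\bigl((X+1)^4+4T^2X^2\bigr)/(4Y^2)$.
\end{itemize}

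The genuine gap is the exclusion of rational $3$-torsion, which you leave unresolved. None of your three routes is carried out. The rational-root and reduction-mod-$p$ routes fail, for the reason you give yourself: $T$ is arbitrary. The Kubert route also aims at the wrong obstruction. The contradiction has nothing to do with $T^2+4$, since no $E_T$ with $T\neq0$ has a rational point of order $3$ at all.

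The missing idea is to eliminate $T$ by reading $\psi_3(X)=0$ as an equation for $T^2$:
\[
4T^2X^3=(1-3X)(X+1)^3 .
\]
Since $X=0$ and $X=-1$ are not roots, multiply by $9X/(X+1)^2$. This shows that $Z=3X$, $W=6TX^2/(X+1)$ is a rational point on the fixed curve $W^2=Z(1-Z)(Z+3)$, which does not depend on $T$. That is the conductor-$48$ curve the paper cites (Cremona 48a1, LMFDB 48.a4). It has rank $0$ and torsion $\Z/2\Z\oplus\Z/2\Z$, so its only affine rational points have $Z\in\{0,1,-3\}$, that is, $X\in\{0,1/3,-1\}$. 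The values $0$ and $-1$ are not roots of $\psi_3$, and $\psi_3(1/3)=4T^2/27\neq0$. This rules out $\Z/12\Z$ uniformly in $T$. Without this step, or an equivalent uniform argument, your proposal does not prove the lemma.
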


\begin{proof}
Write
\[
A_T=T^2+2,
\qquad
E_T:\quad Y^2=X^3+A_TX^2+X.
\]
The point \((0,0)\) has order \(2\). Also $(-1,\pm T)\in E_T(\Q)$.
At \((-1,T)\), the tangent slope is
\[
m=
\frac{3(-1)^2+2A_T(-1)+1}{2T}
\frac{4-2(T^2+2)}{2T}
- T.
\]
For a curve \(Y^2=X^3+A_TX^2+X\), duplication gives
\[
X(2R)=m^2-A_T-2X(R).
\]
Therefore
\[
X\bigl(2(-1,T)\bigr)=T^2-(T^2+2)+2=0,
\]
and the \(Y\)-coordinate is \(0\). Hence
\[
2(-1,T)=(0,0),
\]
so \((-1,T)\) and \((-1,-T)\) have order \(4\).

The rational \(2\)-torsion points are the rational roots of
\[
X(X^2+A_TX+1).
\]
The quadratic factor has discriminant
\[
A_T^2-4=(T^2+2)^2-4=T^2(T^2+4),
\]
which is not a rational square by hypothesis. Hence \((0,0)\) is the only
nontrivial rational \(2\)-torsion point.

Mazur's theorem classifies the possible torsion subgroups of elliptic curves
over \(\Q\): the torsion subgroup is either cyclic of order
\(1,\ldots,10\) or \(12\), or is isomorphic to
\(\Z/2\Z\oplus\Z/2m\Z\) for \(1\le m\le4\) \cite{Mazur}. Since \(E_T(\Q)\)
contains a point of order \(4\) and only one nontrivial rational point of
order \(2\), the only possibilities are
\[
\Z/4\Z,
\qquad
\Z/8\Z,
\qquad
\Z/12\Z.
\]

We exclude \(\Z/8\Z\). Suppose \(R=(X,Y)\in E_T(\Q)\) doubled to one of the
points \((-1,\pm T)\). Then \(Y\ne0\) and \(X(2R)=-1\). The duplication
formula gives
\begin{equation}\label{eq:symmetric-duplication-obstruction}
X(2R)+1=
\frac{(X+1)^4+4T^2X^2}{4Y^2}.
\end{equation}
If \(X(2R)=-1\), then
\[
(X+1)^4+4T^2X^2=0.
\]
If \(X\ne0\), this gives
\[
\left(\frac{(X+1)^2}{2TX}\right)^2=-1,
\]
impossible over \(\Q\). If \(X=0\), then the same equation gives
\((X+1)^4=1\ne0\). Thus \(E_T(\Q)\) has no point of order \(8\).

It remains to exclude rational \(3\)-torsion. The \(3\)-division polynomial
of \(E_T\) is
\begin{equation}\label{eq:symmetric-psi3}
\psi_3(X)=3X^4+4(T^2+2)X^3+6X^2-1.
\end{equation}
If \(E_T\) had a rational point of order \(3\), then its \(X\)-coordinate
would be a rational root of \(\psi_3\). Rearranging \(\psi_3(X)=0\) gives
\begin{equation}\label{eq:symmetric-psi3-rearranged}
4T^2X^3=(1-3X)(X+1)^3.
\end{equation}
The values \(X=0\) and \(X=-1\) do not satisfy \eqref{eq:symmetric-psi3}, so they are excluded.
Set
\[
Z=3X,
\qquad
W=\frac{6TX^2}{X+1}.
\]
Multiplying \eqref{eq:symmetric-psi3-rearranged} by \(9X/(X+1)^2\), we obtain
\begin{equation}\label{eq:symmetric-F-model}
W^2=Z(1-Z)(Z+3).
\end{equation}
With the change of variables
\[
x=-Z-1,
\qquad
y=W,
\]
equation \eqref{eq:symmetric-F-model} becomes
\begin{equation}\label{eq:symmetric-C}
C:\quad y^2=x^3+x^2-4x-4=(x+2)(x+1)(x-2).
\end{equation}
The LMFDB entry \(48.a4\), also Cremona label \(48a1\), together with
Cremona's tables, gives the following result for this curve
\cite{Cremona,LMFDB48a4}:
\[
\rank C(\Q)=0,
\]
and
\[
C(\Q)_{\mathrm{tors}}
\cong \Z/2\Z\oplus\Z/2\Z.
\]
This cited database is used only for this fixed curve \(C\). Since
\(\rank C(\Q)=0\), all rational points of \(C\) are torsion. Since the
cubic in \eqref{eq:symmetric-C} splits completely, the four rational torsion points are
exactly
\[
\calO,
\qquad
(-2,0),
\qquad
(-1,0),
\qquad
(2,0).
\]
Undoing \(x=-Z-1\), the affine points in this list give
\[
Z=1,
\qquad
Z=0,
\qquad
Z=-3.
\]
Thus the possible original \(X\)-coordinates are
\[
X=\frac13,
\qquad
X=0,
\qquad
X=-1.
\]
The values \(0\) and \(-1\) have already been excluded. If \(X=1/3\), then
the right side of \eqref{eq:symmetric-psi3-rearranged} is zero, so \(T=0\), contradicting the hypothesis.
Therefore \(E_T(\Q)\) has no point of order \(3\), and hence the torsion
group in the list above cannot be \(\Z/12\Z\). Consequently
\eqref{eq:symmetric-torsion-group} holds.
\end{proof}

\begin{lemma}[The endpoint point is non-torsion]\label{lem:symmetric-endpoint}
Assume the symmetric convex parallelogram setup, assume the four side lengths are
rational, and assume \(AB\notin\Q\). Let
\[
h_p=\sqrt{1+p^2},
\qquad
h_q=\sqrt{1+q^2},
\]
and put
\[
Q_0=(t_0,u_0):=(h_p-p,\ h_q-q)\in\Gamma_T(\Q).
\]
Let \(R_0=\phi(Q_0)\in E_T(\Q)\), where \(\phi\) is the birational map in
\eqref{eq:symmetric-map}. Then \(R_0\) is non-torsion.
\end{lemma}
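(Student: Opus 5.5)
The plan is to place \(Q_0\) on \(\Gamma_T\), push it to \(E_T\) by the map \eqref{eq:symmetric-map}, and compare \(R_0\) with the explicit four-element torsion group of Lemma~\ref{lem:symmetric-torsion}. By Lemma~\ref{lem:symmetric-line-reduction} we are in the subcase \(AB\notin\Q\), so \(T\ne0\) and \(T^2+4\notin(\Q^\times)^2\). Hence Lemma~\ref{lem:symmetric-torsion} applies and \(E_T(\Q)_{\rm tors}=\{\calO,(0,0),(-1,T),(-1,-T)\}\). It therefore suffices to show that \(R_0\) is none of these four points.

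\emph{Step 1: \(Q_0\) is a rational point of \(\Gamma_T\).}
\begin{itemize}
\item Rationality of \(h_p,h_q\): we have \(UA^2=a^2+b^2=b^2(1+p^2)\) and \(UB^2=(D-a)^2+b^2=b^2(1+q^2)\). The side lengths are rational and \(b\in\Q^\times\), so \(h_p,h_q\in\Q\).
\item Nonzero coordinates: \(h_p>|p|\) and \(h_q>|q|\), so \(t_0,u_0>0\).
\item The parametrization: as in the proof of Lemma~\ref{lem:S}, \(t_0^{-1}=h_p+p\). Hence \((t_0-t_0^{-1})/2=-p\), and likewise \((u_0-u_0^{-1})/2=-q\).
\item The defining relation: \(-p-T=-q\), so taking \(\alpha=-p\) (the value at the endpoint \(U\)) gives \(\alpha-T=-q\) as required.
\end{itemize}
The subtraction argument of Lemma~\ref{lem:symmetric-detour-curve} then shows that \((t_0,u_0)\) satisfies \eqref{eq:gamma-T}.

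\emph{Step 2: \(R_0\) is an affine point.} Lemma~\ref{lem:symmetric-detour-curve} shows that \(t-u-2T\ne0\) at every finite point of \(\Gamma_T\). So \(\phi\) is defined at \(Q_0\) and \(R_0=(X_0,Y_0)\) is affine, with
\[
X_0=-1-\frac{2T}{t_0-u_0-2T}.
\]
In particular \(R_0\ne\calO\).

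\emph{Step 3: \(R_0\) avoids the other torsion points.}
\begin{itemize}
\item \(X_0=-1\) would force \(2T/(t_0-u_0-2T)=0\), which is impossible since \(T\ne0\). This excludes \((-1,\pm T)\).
\item \(X_0=0\) means \(t_0-u_0-2T=-2T\), i.e. \(t_0=u_0\), i.e. \(\sqrt{1+p^2}-p=\sqrt{1+q^2}-q\). The function \(x\mapsto\sqrt{1+x^2}-x\) is strictly decreasing: its derivative is \(x/\sqrt{1+x^2}-1<0\). So this forces \(p=q\), i.e. \(T=0\), a contradiction. This excludes \((0,0)\).
\end{itemize}
Hence \(R_0\) is non-torsion.

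The only real work lies in Step 1: checking that the endpoint \(U\) really corresponds to a point of \(\Gamma_T\) with nonzero coordinates, and tracking the sign conventions of \(p,q\) against the parametrization. Once that holds, the torsion exclusion is immediate from the explicit torsion group.
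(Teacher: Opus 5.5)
Your proposal is correct and follows the paper's proof essentially step for step. Like the paper, you check that $Q_0\in\Gamma_T(\Q)$ using the rational side lengths, invoke the four-point torsion group of Lemma~\ref{lem:symmetric-torsion}, and exclude $\calO$ and $(-1,\pm T)$ in the same way. The only cosmetic difference is how you exclude $(0,0)$: you show $X_0\ne0$, since $t_0=u_0$ would force $p=q$ and hence $T=0$, whereas the paper shows $Y(R_0)=T(t_0+u_0)/(t_0-u_0-2T)\ne0$ using $t_0,u_0>0$; both arguments work.
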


\begin{proof}
In the symmetric convex parallelogram, \(UB=AV\) and \(VB=UA\). The side-length
hypothesis therefore implies that \(UA\) and \(UB\) are rational. Since
\[
UA^2=a^2+b^2=b^2(1+p^2),
\]
and
\[
UB^2=(D-a)^2+b^2=b^2(1+q^2),
\]
we have
\[
h_p=\frac{UA}{b}\in\Q_{>0},
\qquad
h_q=\frac{UB}{b}\in\Q_{>0}.
\]
Thus \(t_0,u_0\in\Q\). Moreover \(t_0,u_0>0\), because
\(\sqrt{1+z^2}>z\) for every real number \(z\). The standard parametrization
then gives
\[
\frac{t_0-t_0^{-1}}2=-p,
\qquad
\frac{u_0-u_0^{-1}}2=-q.
\]
Since \(-p-T=-q\), the point \(Q_0\) lies on \(\Gamma_T(\Q)\).

By Lemma~\ref{lem:symmetric-line-reduction}, the assumption \(AB\notin\Q\)
implies $T^2+4\notin(\Q^\times)^2$,
so Lemma~\ref{lem:symmetric-torsion} applies. First \(R_0\) is finite. If
$t_0-u_0-2T=0$,
then \(t_0-u_0=2T\), and substituting into \(\Gamma_T\) gives
$2T(t_0u_0+1)=2Tt_0u_0$, impossible because \(T\ne0\). Thus \(R_0\ne\calO\).

By \eqref{eq:symmetric-torsion-group}, to prove that \(R_0\) is non-torsion it remains to exclude the
three finite torsion points. Its \(X\)-coordinate is
\[
X(R_0)=-1-\frac{2T}{t_0-u_0-2T},
\]
which is not \(-1\) because \(T\ne0\). Hence $R_0\ne(-1,T), \ R_0\ne(-1,-T)$.
Finally,
\[
Y(R_0)=\frac{T(t_0+u_0)}{t_0-u_0-2T}.
\]
Here \(T\ne0\), \(t_0>0\), \(u_0>0\), and the denominator is nonzero, so $Y(R_0)\ne0$.
Thus \(R_0\ne(0,0)\). Therefore \(R_0\in E_T(\Q)\) is non-torsion.
\end{proof}

\begin{proof}[Proof of Proposition~\ref{prop:convex-symmetric}]
Use the notation of Lemma~\ref{lem:symmetric-line-reduction}. If
\(AB\in\Q\), that lemma shows that the midpoint of \(UV\) is a required
point. Assume therefore that \(AB\notin\Q\). Lemma~\ref{lem:symmetric-line-reduction}
gives \(T\ne0\) and \eqref{eq:symmetric-nonrational-diagonal}, Lemma~\ref{lem:symmetric-detour-curve} gives the
elliptic model, and Lemma~\ref{lem:symmetric-endpoint} gives a non-torsion
point \(R_0\in E_T(\Q)\) lying over the endpoint \(U\).

We now use density of multiples on the real elliptic curve. The real points
\(E_T(\R)\) form a compact one-dimensional real Lie group. Its identity
component is isomorphic to \(\R/\Z\), and a non-torsion element corresponds
to an irrational rotation. Therefore the cyclic subgroup generated by a
non-torsion point is dense in the real component it meets. If the point is
not in the identity component, then its odd multiples are dense in its
component, because twice the point lies in the identity component.

For real \(\alpha\) near \(-p\), define
\[
t(\alpha)=\sqrt{1+\alpha^2}+\alpha,
\qquad
u(\alpha)=\sqrt{1+(\alpha-T)^2}+\alpha-T,
\]
using the positive square roots. Then \(t(\alpha)\) and \(u(\alpha)\) are
positive real analytic functions, and
\[
\alpha=\frac{t(\alpha)-t(\alpha)^{-1}}2,
\qquad
\alpha-T=\frac{u(\alpha)-u(\alpha)^{-1}}2.
\]
Thus
\[
Q(\alpha)=(t(\alpha),u(\alpha))\in\Gamma_T(\R),
\]
and \(Q(-p)=Q_0\).

Choose \(\varepsilon>0\) small enough that $J=(-p,-p+\varepsilon)\subset(-p,q)$,
and that the inverse formula \eqref{eq:symmetric-inverse} is valid on \(\phi(Q(J))\). Then
$\phi(Q(J))$ is a nonempty open real arc in the connected component of \(E_T(\R)\)
containing \(R_0\). Since \(R_0\) is non-torsion, infinitely many multiples
of \(R_0\), in the appropriate parity class if necessary, lie on this arc.

For one such multiple \(R_n=nR_0\), use the inverse map \eqref{eq:symmetric-inverse}. We obtain
rational numbers \(t_n,u_n\in\Q^\times\) and some \(\alpha_n\in J\) such
that
\[
\alpha_n=\frac{t_n-t_n^{-1}}2,
\qquad
\alpha_n-T=\frac{u_n-u_n^{-1}}2.
\]
In particular \(\alpha_n\in\Q\). Set
\[
x_n=a+b\alpha_n,
\qquad
P_n=(x_n,0).
\]
Because \(\alpha_n\in J\subset(-p,q)\), equation \eqref{eq:symmetric-interval} gives $0<x_n<D$.
Thus \(P_n\) lies on the open segment \(UV\). The distances to \(U\) and
\(V\) are
\[
P_nU=x_n,
\qquad
P_nV=D-x_n,
\]
which are rational. Also
\[
1+\alpha_n^2=
\left(\frac{t_n+t_n^{-1}}2\right)^2,
\]
and
\[
1+(\alpha_n-T)^2=
\left(\frac{u_n+u_n^{-1}}2\right)^2.
\]
Therefore
\[
P_nA=
b\left|\frac{t_n+t_n^{-1}}2\right|\in\Q,
\]
and
\[
P_nB=
b\left|\frac{u_n+u_n^{-1}}2\right|\in\Q.
\]
This gives a required point. Since infinitely many distinct multiples of the
non-torsion point \(R_0\) meet the arc, and the birational inverse is
injective on the chosen arc, the construction gives infinitely many distinct
points \(P_n\) on \(UV\). This proves the proposition.
\end{proof}

\subsection{Rank-zero criterion for symmetric convex parallelograms}

Consider the normalized symmetric convex parallelogram
\[
U=(0,0),\qquad V=(D,0),
\]
\[
A=(a,b),\qquad B=(D-a,-b),
\qquad a,b,D\in\Q,\quad b>0.
\]
Put
\[
p=\frac ab,\qquad q=\frac{D-a}{b},\qquad
T=q-p=\frac{D-2a}{b}.
\]
For \(P=(x,0)\), put
\[
\alpha=\frac{x-a}{b}.
\]
Then
\[
x=a+b\alpha,
\]
and therefore
\[
P\in UV^\circ
\quad\Longleftrightarrow\quad
-p<\alpha<q.
\]
Furthermore
\[
PA^2=b^2(1+\alpha^2),
\]
and
\[
PB^2=b^2\bigl(1+(\alpha-T)^2\bigr).
\]
Thus \(P\) has rational distance from \(A\) and \(B\) if and only if
\[
1+\alpha^2,\qquad 1+(\alpha-T)^2
\]
are rational squares.

Parametrize the two rational-square conditions by
\[
\alpha=\frac{t-t^{-1}}2,\qquad
\alpha-T=\frac{u-u^{-1}}2,
\qquad t,u\in\Q^\times.
\]
Eliminating \(\alpha\) gives the affine curve
\[
\Gamma_T:\quad (t-u)(tu+1)=2Ttu.
\]
For \(T\ne0\), this curve is birational over \(\Q\) to
\[
E_T:\quad
Y^2=X^3+(T^2+2)X^2+X,
\]
via
\[
X=-1-\frac{2T}{t-u-2T},\qquad
Y=\frac{T(t+u)}{t-u-2T},
\]
with inverse
\[
t=\frac{TX-Y}{X+1},\qquad
u=-\frac{TX+Y}{X+1}.
\]
The denominator \(t-u-2T\) is nonzero on \(\Gamma_T\): if
\(t-u-2T=0\), then \(t-u=2T\), and substituting in the defining equation of
\(\Gamma_T\) gives
\[
2T(tu+1)=2Ttu,
\]
contradicting \(T\ne0\).

\begin{proposition}[Rank-zero finiteness criterion]\label{prop:rank-zero}
Assume \(T\ne0\). If \(E_T(\Q)\) has rank \(0\), then
\(\mathcal R(U,V;A,B)\), and hence \(\mathcal R^\circ(U,V;A,B)\), is finite.
\end{proposition}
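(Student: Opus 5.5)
We construct an injective map from \(\mathcal R(U,V;A,B)\) into \(E_T(\Q)\). If \(E_T(\Q)\) has rank \(0\), it equals its torsion subgroup. By Mazur's theorem that subgroup has at most \(16\) elements. Injectivity then forces \(\mathcal R(U,V;A,B)\) to be finite, and \(\mathcal R^\circ(U,V;A,B)\) is a subset of it.

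First we build the map. Let \(P=(x,0)\in\mathcal R(U,V;A,B)\), so \(x\in\Q\) and \(0\le x\le D\). Put \(\alpha=(x-a)/b\in\Q\). By the computation preceding the proposition, \(PA,PB\in\Q\) means exactly that \(1+\alpha^2\) and \(1+(\alpha-T)^2\) are rational squares. We make a canonical choice of parameters:
\[
t=\sqrt{1+\alpha^2}+\alpha,\qquad u=\sqrt{1+(\alpha-T)^2}+(\alpha-T),
\]
taking positive square roots. Then \(t,u\in\Q_{>0}\), and \(\alpha=(t-t^{-1})/2\), \(\alpha-T=(u-u^{-1})/2\), exactly as in the proof of Lemma~\ref{lem:S}. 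Subtracting these two identities shows \((t,u)\in\Gamma_T(\Q)\). As observed just before the proposition, \(t-u-2T\ne0\) on \(\Gamma_T\), so
\[
\Phi(P):=\phi(t,u)=\Bigl(-1-\frac{2T}{t-u-2T},\ \frac{T(t+u)}{t-u-2T}\Bigr)
\]
is a well-defined finite point of \(E_T(\Q)\), by Lemma~\ref{lem:symmetric-detour-curve}.

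Next we show \(\Phi\) is injective. Since \(T\ne0\), the \(X\)-coordinate is never \(-1\). Hence the inverse formula \eqref{eq:symmetric-inverse} applies and recovers \((t,u)\) from \(\Phi(P)\). We must check that the inverse really returns this \((t,u)\) rather than some other preimage. To do so we verify \(\psi\circ\phi=\mathrm{id}\) on \(\Gamma_T\) whenever \(X\ne-1\), where \(\psi\) denotes the map in \eqref{eq:symmetric-inverse}. Set \(w=t-u-2T\). Then \(X+1=-2T/w\) and
\[
TX-Y=\frac{-Tw-2T^2-T(t+u)}{w}=\frac{-2Tt}{w},
\]
so \((TX-Y)/(X+1)=t\). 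The analogous computation gives \(-(TX+Y)/(X+1)=u\). This identity holds without using the curve equation, so injectivity is immediate. From \(t\) we recover \(\alpha=(t-t^{-1})/2\), and then \(x=a+b\alpha\). Therefore \(P\mapsto\Phi(P)\) is injective.

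Finally we conclude. If \(\rank E_T(\Q)=0\), then \(E_T(\Q)=E_T(\Q)_{\rm tors}\), which is finite by Mazur's theorem \cite{Mazur}. In fact, when \(AB\notin\Q\), Lemma~\ref{lem:symmetric-torsion} shows \(E_T(\Q)_{\rm tors}\) has only \(4\) elements. Combined with injectivity, \(\mathcal R(U,V;A,B)\) is finite. The only step needing care is injectivity: one must fix the sign convention for the square roots so that each point \(P\) determines a single \((t,u)\), and the positive-root choice above handles this. Apart from that, the proof is bookkeeping; no finer analysis of which torsion points actually arise is needed.
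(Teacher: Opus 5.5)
Your proposal is correct and follows the same route as the paper: each point of $\mathcal R(U,V;A,B)$ yields a rational point of $\Gamma_T$ and hence of $E_T$, and a rank-zero $E_T(\Q)$ is finite. Your explicit injectivity check (fixing positive square roots, then verifying $\psi\circ\phi=\mathrm{id}$ directly off $t-u-2T=0$) spells out what the paper leaves implicit in its appeal to birationality, and you could drop Mazur, since finiteness in rank zero already follows from Mordell--Weil.
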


\begin{proof}
Every point \(P=(x,0)\in\mathcal R(U,V;A,B)\) gives a rational value
\(\alpha=(x-a)/b\). The conditions \(PA,PB\in\Q\) give rational
\(t,u\in\Q^\times\) satisfying the defining equation of \(\Gamma_T\), after
choosing signs in the standard
parametrization of \(1+\alpha^2\) and \(1+(\alpha-T)^2\). Hence every such
\(P\) gives a rational point on \(\Gamma_T\), and therefore, by the birational map, a rational point on \(E_T\).

If \(E_T(\Q)\) has rank \(0\), then \(E_T(\Q)\) is its finite torsion group.
Thus there are only finitely many rational points on \(E_T\), hence only
finitely many rational points on \(\Gamma_T\), hence only finitely many
possible values of \(\alpha\), and therefore only finitely many points
\(P=(a+b\alpha,0)\) on \(UV\).
\end{proof}

\subsection{A finite symmetric convex parallelogram example}

\begin{example}\label{ex:finite-parallelogram}
Let $U=(0,0),\ A=(18,24),\ V=(50,0),\ B=(32,-24)$.
Then \(UAVB\) is a symmetric convex parallelogram. Its side lengths are
$UA=VB=30,\ AV=BU=40$, and its diagonals have lengths $UV=50,\ AB=50$.
This is an integer-coordinate parallelogram with integer side lengths
and integer diagonals.
\end{example}

\begin{theorem}[A rank-zero finite symmetric convex parallelogram example]\label{thm:finite-example}
For the parallelogram in Example~\ref{ex:finite-parallelogram}, $\mathcal R^\circ(U,V;A,B)=\{(25,0)\}$.
Equivalently, the only rational point strictly between \(U\) and \(V\)
whose distances from all four vertices are rational is the midpoint $P=(25,0)$.
\end{theorem}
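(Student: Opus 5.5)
Compute the normalized data: \(D=50\), \(a=18\), \(b=24\), so \(p=a/b=3/4\), \(q=(D-a)/b=4/3\), and \(T=q-p=7/12\). The open diagonal corresponds to \(-3/4<\alpha<4/3\). The relevant curve is
\[
E_{7/12}:\ Y^2=X^3+\tfrac{337}{144}X^2+X.
\]
Scaling \(X=\xi/144\), \(Y=\eta/1728\) gives the integral model
\[
\eta^2=\xi^3+337\xi^2+20736\xi=\xi(\xi+81)(\xi+256).
\]
This is exactly the curve \(Y^2=X(X+81)(X+256)\) from the proof of Theorem~\ref{thm:finite-aligned}. There we already quoted \(\rank=0\) and torsion \(\Z/2\Z\oplus\Z/8\Z\) from Cremona--LMFDB, so I will reuse that citation. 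Indeed \(T^2+4=625/144\) is a square, consistent with full \(2\)-torsion. By Proposition~\ref{prop:rank-zero}, \(\mathcal R^\circ\) is finite, and every point of it comes from one of the \(16\) torsion points.

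Enumerate the torsion explicitly and push it back through the inverse map \eqref{eq:symmetric-inverse}: \(t=(TX-Y)/(X+1)\), \(u=-(TX+Y)/(X+1)\), and \(\alpha=(t-t^{-1})/2\).

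The points on the curve are:
\begin{itemize}
\item the \(2\)-torsion \(X\in\{0,-81/144,-256/144\}=\{0,-9/16,-16/9\}\), with \(Y=0\);
\item the order-\(4\) points \((-1,\pm T)\);
\item the remaining order-\(4\) and order-\(8\) points.
\end{itemize}
The order-\(8\) points can be obtained by halving: the \(\xi\)-coordinates satisfy \(\xi^2=81\cdot256\), so \(\xi=\pm144\) and \(X=\pm1\), together with the analogous halvings of the other \(2\)-torsion points.

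For each affine point with \(X\ne-1\), compute \(t\), then \(\alpha\). Points with \(X=-1\), or with \(t=0\) or \(u=0\), correspond to no finite \(\alpha\) (they lie at infinity of \(\Gamma_T\)). The sign choices \(t\mapsto -t^{-1}\) and \(u\mapsto -u^{-1}\) give the same \(\alpha\). I expect the finite \(\alpha\)-values to be:
\begin{itemize}
\item the endpoints \(\alpha=-3/4\) and \(\alpha=4/3\), corresponding to \(U\) and \(V\) (coming from \(Q_0\) and its companions);
\item \(\alpha=T/2=7/24\), the midpoint \(x=25\);
\item possibly further values lying outside the interval or coinciding with these.
\end{itemize}
As a check, the point \(X=0\) gives \(t=0\), hence no \(\alpha\). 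The point \(X=1\) gives \(t=(7/12-Y)/2\) with \(Y^2=1+337/144+1=625/144\), so \(Y=\pm25/12\). This yields \(t=-3/4\) or \(t=4/3\), hence \(\alpha=(t-t^{-1})/2=7/24\) in both cases, which is the midpoint. I will tabulate all \(16\) points in the same way.

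Conclude that the only \(\alpha\) strictly inside \((-3/4,4/3)\) is \(7/24\), so \(\mathcal R^\circ=\{(25,0)\}\). Then verify directly that \(PU=PV=25\) and \(PA=PB=25\): \(PA^2=7^2+24^2=625\), and \(PB^2=7^2+24^2=625\) likewise.

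The main obstacle is the complete, error-free torsion enumeration and its transport through the birational map. I must not overlook any order-\(8\) point, and I must handle the exceptional points with \(X=-1\) or where \(t\) or \(u\) degenerates, so that no interior \(\alpha\) is missed. I will double-check by confirming that the total count is \(16\) and that each computed \((t,u)\) satisfies \(\Gamma_T\).
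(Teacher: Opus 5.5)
\textbf{Gap in the torsion enumeration.} Your overall route matches the paper's: the same normalization, the same rank-zero curve $y^2=x(x+81)(x+256)$ with the Cremona--LMFDB citation, and the inverse map $t=(TX-Y)/(X+1)$ applied to all $16$ torsion points. The problem is in the step you yourself call the main obstacle: your recipe for the order-$8$ points is wrong.

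\begin{itemize}
\item Solving $\xi^2=81\cdot256$ gives the halves of $(0,0)$, i.e.\ $X=\pm1$. These are the four points of order $4$, namely $(-1,\pm\tfrac7{12})$ (already in your list) and $(1,\pm\tfrac{25}{12})$. They are not of order $8$.
\item The ``analogous halvings of the other $2$-torsion points'' give nothing rational. For $\xi=-81$ you would need $\sqrt{(-81)\cdot175}$, and for $\xi=-256$ you would need $\sqrt{256\cdot175}=80\sqrt7$.
\end{itemize}

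So the procedure as written finds only $8$ points: $\calO$, three of order $2$, and four of order $4$. It misses exactly the eight points of order $8$, and these are precisely the points lying over the endpoints $U,V$. Your expected list of $\alpha$-values (``possibly further values lying outside the interval'') is therefore not backed by any computation on half of the torsion group. That half is where an interior value would have to come from if the theorem failed. Your count-to-$16$ check would detect the shortfall, but the proposal gives no correct way to recover the missing points.

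\textbf{The fix.} The order-$8$ points are the halves of the order-$4$ points $(1,\pm\tfrac{25}{12})$. In $\xi$-coordinates this point has $\xi=144$, and $144,\ 225,\ 400$ are all squares, so it is halvable. Equivalently, their $X$-coordinates are the roots of $X^2-2(1\pm w)X+1=0$ with $w=\tfrac{25}{12}$, as in the proof of Proposition~\ref{prop:s-minus-one}. The points $(-1,\pm\tfrac7{12})$ are not halvable, since $\xi=-144<0$.

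This gives $X\in\{6,\tfrac16,-\tfrac23,-\tfrac32\}$, i.e.\ the points $(6,\pm\tfrac{35}2)$, $(\tfrac16,\pm\tfrac{35}{72})$, $(-\tfrac23,\pm\tfrac5{18})$, $(-\tfrac32,\pm\tfrac58)$. The inverse map sends each of them either to $t\in\{-2,\tfrac12\}$, giving $\alpha=-\tfrac34$, or to $t\in\{3,-\tfrac13\}$, giving $\alpha=\tfrac43$.

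The order-$2$ points $(-\tfrac9{16},0)$ and $(-\tfrac{16}9,0)$ give $t=-\tfrac34$ and $t=\tfrac43$, hence $\alpha=\tfrac7{24}$. Your own check gives $\alpha=\tfrac7{24}$ for $(1,\pm\tfrac{25}{12})$. With all of this, you recover the paper's table.

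\textbf{A smaller point on the excluded points.} The point $(0,0)$ maps to $(t,u)=(0,0)$. This is an affine point of $\Gamma_T$ excluded by $t,u\in\Q^\times$, not a point at infinity. You should also say why no admissible $(t,u)$ can land on $X=-1$, as the paper does: the forward formula gives $X+1=-2T/(t-u-2T)\neq0$.
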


\begin{proof}
We have $D=50,\qquad a=18,\qquad b=24$. Hence
\[
p=\frac ab=\frac34,\qquad
q=\frac{D-a}{b}=\frac43,
\]
and
\[
T=q-p=\frac43-\frac34=\frac7{12}.
\]
For \(P=(x,0)\), put
\[
\alpha=\frac{x-18}{24}.
\]
Then \(P\) lies strictly between \(U\) and \(V\) if and only if
\[
-\frac34<\alpha<\frac43.
\]

The corresponding elliptic curve is
\[
E_{7/12}:\quad
Y^2=X^3+\frac{337}{144}X^2+X.
\]
With
\[
x=144X,\qquad y=1728Y,
\]
this becomes
\[
E:\quad y^2=x^3+337x^2+20736x
=x(x+81)(x+256).
\]
We use the standard Cremona--LMFDB rank and torsion results for this curve
\cite{Cremona,LMFDB}: it has
rank \(0\), and its rational torsion subgroup is isomorphic to
\[
\Z/2\Z\oplus \Z/8\Z.
\]
Therefore every rational point on \(E_{7/12}\) is torsion.

For this particular value \(T=7/12\), the rational torsion points on
\(E_{7/12}\) are
\[
\begin{gathered}
\calO,\quad (0,0),\quad
\left(-1,\pm\frac7{12}\right),\quad
\left(1,\pm\frac{25}{12}\right),\\
\left(-\frac9{16},0\right),\quad
\left(-\frac{16}{9},0\right),\\
\left(6,\pm\frac{35}{2}\right),\quad
\left(\frac16,\pm\frac{35}{72}\right),\quad
\left(-\frac23,\pm\frac5{18}\right),\quad
\left(-\frac32,\pm\frac58\right).
\end{gathered}
\]
There are \(16\) points in this list, which equals the order of
\(\Z/2\Z\oplus\Z/8\Z\), so the list is exhaustive.
Substituting these points in the inverse formula
\[
t=\frac{TX-Y}{X+1},\qquad
\alpha=\frac{t-t^{-1}}2,
\]
where the formula is defined and \(t\ne0\), gives exactly the following
finite \(\alpha\)-values:
\[
\begin{array}{c|c}
\alpha & \text{points of }E_{7/12}(\Q)\text{ producing this value}\\ \hline
-\frac34
&
\left(6,\frac{35}{2}\right),
\left(\frac16,-\frac{35}{72}\right),
\left(-\frac23,\frac5{18}\right),
\left(-\frac32,-\frac58\right)
\\[4pt]
\frac7{24}
&
\left(1,\pm\frac{25}{12}\right),
\left(-\frac9{16},0\right),
\left(-\frac{16}{9},0\right)
\\[4pt]
\frac43
&
\left(6,-\frac{35}{2}\right),
\left(\frac16,\frac{35}{72}\right),
\left(-\frac23,-\frac5{18}\right),
\left(-\frac32,\frac58\right).
\end{array}
\]
The remaining torsion points give no finite admissible \(\alpha\)-value:
\(\calO\) is the point at infinity, \((0,0)\) gives \(t=0\), and
\(\left(-1,\pm7/12\right)\) lie outside the affine inverse chart
\(X\ne-1\).
These chart exclusions do not hide additional finite values. Every finite
point of \(\Gamma_T\) maps by the formula to an affine point of
\(E_T\), so it cannot map to \(\calO\). If a finite point mapped to
\((0,0)\), the inverse formula, which is valid at \(X=0\), would give
\(t=u=0\), contradicting \(t,u\in\Q^\times\). If a finite point mapped to
one of the points with \(X=-1\), then
\[
X=-1-\frac{2T}{t-u-2T}
\]
would force \(T=0\), whereas here \(T=7/12\).

Thus the only possible rational values of \(\alpha\) are
\[
-\frac34,\qquad \frac7{24},\qquad \frac43.
\]
The values \(-3/4\) and \(4/3\) are precisely the endpoints of the interval.
Indeed,
\[
\alpha=-\frac34
\quad\Longleftrightarrow\quad
x=18+24\left(-\frac34\right)=0,
\]
and
\[
\alpha=\frac43
\quad\Longleftrightarrow\quad
x=18+24\left(\frac43\right)=50.
\]
The only value strictly inside the interval is
\[
\alpha=\frac7{24}.
\]
It gives
\[
x=18+24\cdot\frac7{24}=25.
\]
Therefore
\[
\mathcal R^\circ(U,V;A,B)=\{(25,0)\}.
\]
Finally, the point \(P=(25,0)\) indeed works:
\[
PU=PV=25,
\]
and since \(P\) is also the midpoint of the other diagonal,
\[
PA=PB=\frac{AB}{2}=25.
\]
This proves both existence and uniqueness of the interior point on the
chosen diagonal.
\end{proof}

\begin{remark}[Symmetric convex parallelogram with rational \(AB\) can be finite]
\label{rem:symmetric-rational-ab-finite}
The conclusion of Theorem~\ref{thm:finite-example} is stronger than the
existence theorem used in Proposition~\ref{prop:convex-symmetric}. That
proposition only needs the midpoint. The uniqueness statement uses the
rank-zero case above, and should therefore be understood as an arithmetic
classification for this particular elliptic curve.

Thus Theorem~\ref{thm:finite-example} gives an explicit affirmative answer
to the question whether the convex symmetric parallelogram case with rational
other diagonal \(AB\) can have only finitely many rational points on the
chosen diagonal. In this example \(AB=50\), and the only rational point in
the open diagonal segment at rational distance from all four vertices is the
midpoint \((25,0)\). Therefore the rationality of the other diagonal
\(AB\) should not be interpreted as an infinite-points condition; it gives
an immediate midpoint solution, while infinitude depends on the rank of the
associated curve.
\end{remark}

\section{Exceptional endpoint-line cases}
\label{sec:exceptional}

Throughout this section we return to the normalized endpoint-line setup of
Section~\ref{sec:detour}. These endpoint arguments are used in the final
proof for concave quadrilaterals, where the intersection point of the line
\(AB\) with the diagonal line need not lie on the open diagonal segment.

Since \(s<0\), the exceptional alternatives not covered by
Proposition~\ref{prop:generic} are
\begin{equation}\label{eq:exceptional-list}
r=0,\qquad s=-1,\qquad
4r^2s=-(1-s^2)^2.
\end{equation}
The proof of these three alternatives is unified. In each case there is a
special rational value \(\tau\) of \(\alpha_1\). In all three cases this
special value is
\[
\tau=\frac r{s-1}.
\]
It has a simple geometric meaning: \(x=a+b\tau\) is the intersection of the
line \(AB\) with the \(x\)-axis. Indeed
\[
r-s\tau=-\tau
\]
is equivalent to \(\tau=r/(s-1)\). Since
\[
\alpha_1=\frac{x-a}{b},\qquad
\alpha_2=\frac{c-x}{d},
\]
the equality \(\alpha_2=-\alpha_1\) says exactly that the points
\((a,b)\), \((c,d)\), and \((x,0)\) are collinear. Hence the hypothesis that
neither endpoint \(U,V\) lies on \(AB\) implies
\[
\lambda\ne\tau,\qquad \mu\ne\tau.
\]
In the first and third exceptional cases the special value itself satisfies
\(\tau,-\tau\in\calS\), so if \(\tau\in I\) it gives the desired point
directly. In the second exceptional case \(s=-1\), the special value is the
central value \(T/2\); it need not lie in \(\calS\) for arbitrary \(T\), and
the proof below uses the torsion-value calculation instead. If the relevant
direct point is not available, then the two endpoint values
\(\lambda,\mu\) lie strictly on the same side of the special value, or else
the torsion-value calculation supplies the middle value \(T/2\) as an
available detour value. In the remaining endpoint cases, the
torsion-value computation for the corresponding elliptic curve says that on
either side of the special value there is at most one torsion value of
\(\alpha_1\). Since \(\lambda\ne\mu\), at least one endpoint point is
non-torsion. Density of its multiples on the real component then supplies
infinitely many rational solutions with \(\alpha_1\in I\).

\begin{lemma}[Density from one non-torsion endpoint]\label{lem:density}
Let \(\Gamma\) be one of the genus-one curves occurring below, equipped with
a rational coordinate \(\alpha\). Suppose that the positive real branch of
\(\Gamma(\R)\) is parameterized continuously by \(\alpha\in\R\). Let
\(\lambda<\mu\) be two rational values on this branch, and suppose that at
least one of the two points of \(\Gamma(\Q)\) over \(\lambda,\mu\) is
non-torsion for some elliptic curve group law on \(\Gamma\). Then there are
infinitely many rational points of \(\Gamma\) whose \(\alpha\)-coordinates
lie in \((\lambda,\mu)\).
\end{lemma}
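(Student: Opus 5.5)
Let \(R\in\Gamma(\Q)\) be a non-torsion point over one of the endpoints, say over \(\lambda\); the case of \(\mu\) is symmetric. Fix an elliptic curve group law on \(\Gamma\) for which \(R\) is non-torsion. I follow the density argument already used in the proof of Proposition~\ref{prop:convex-symmetric}. That proof was written for a specific curve; here it is isolated as a general statement.

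\textbf{Step 1: the real group.} Identify \(\Gamma\) birationally with a nonsingular Weierstrass model \(E\) over \(\Q\). The group \(E(\R)\) is a compact one-dimensional real Lie group. It has identity component \(E(\R)^0\cong\R/\Z\) and at most one other component. Since \(R\) is non-torsion, \(2R\in E(\R)^0\) is also non-torsion, so it corresponds to an irrational rotation of \(\R/\Z\). Hence \(\{2nR:n\in\Z\}\) is dense in \(E(\R)^0\), and the translate \(\{(2n+1)R\}\) is dense in \(R+E(\R)^0\). So in every case the multiples of \(R\) in the appropriate parity class are dense in the real component \(K\) containing \(R\). All of these multiples are rational points.

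\textbf{Step 2: a real arc of the branch.} By hypothesis, the positive real branch is parametrized continuously by \(\alpha\). Choose \(\varepsilon>0\) with \(J=(\lambda,\lambda+\varepsilon)\subset(\lambda,\mu)\). Shrink \(\varepsilon\) so that the birational map \(\Gamma\dashrightarrow E\) and its inverse are regular on the image of the branch over \([\lambda,\lambda+\varepsilon)\). This is possible because the indeterminacy locus is finite and the point over \(\lambda\), which is \(R\), lies in the domain. The image of \(J\) is then a connected arc of \(E(\R)\) with \(R\) in its closure, so it lies in \(K\). Moreover it is a nonempty open subset of \(K\), since the map is a local diffeomorphism of one-dimensional manifolds away from finitely many points.

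\textbf{Step 3: conclusion.} By Step 1, infinitely many distinct multiples \(nR\) lie in this open arc. Pull each one back through the inverse map. This gives a rational point of \(\Gamma\) on the branch, with \(\alpha\)-coordinate in \(J\subset(\lambda,\mu\)); the coordinate is rational because \(\alpha\) is a rational function on \(\Gamma\). The continuous parametrization by \(\alpha\) is injective along the branch, so distinct points on the arc give distinct \(\alpha\)-values. Therefore the \(\alpha\)-values obtained are infinitely many.

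The main obstacle is Step 2: justifying that a one-sided real neighbourhood of the endpoint on the branch maps onto an open subset of the same real component as \(R\), avoiding the finitely many points where the birational map fails to be an isomorphism. I would handle this by deleting those finitely many bad points from the branch and arguing by connectedness, exactly as in the choice of \(J\) in the proof of Proposition~\ref{prop:convex-symmetric}.
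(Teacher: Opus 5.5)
Your proposal is correct and follows essentially the same route as the paper. Both arguments use the density of the multiples of the non-torsion endpoint point in its real component (odd multiples if that point lies off the identity component), together with the fact that a one-sided $\alpha$-neighbourhood of the endpoint on the branch is a nonempty open arc of that component. Your added care about regularity of the birational maps and injectivity of the $\alpha$-parametrization makes explicit what the paper compresses into the remark that $\alpha$ is a real analytic local parameter at the finite endpoint points.
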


\begin{proof}
Let \(R\) be a non-torsion endpoint point. If \(R\) lies over \(\lambda\),
use a right-hand neighborhood of \(\lambda\); if \(R\) lies over \(\mu\), use
a left-hand neighborhood of \(\mu\). Let \(E\) be an elliptic curve
birational to \(\Gamma\), with the group law used to define torsion, and
identify \(R\) with its image on \(E\). The real Lie group \(E(\R)\) has
either one or two connected components. On the identity component, a
non-torsion point generates a dense cyclic subgroup. If \(R\) lies in the
other component, then \(2R\) is a non-torsion point in the identity
component, so the even multiples of \(R\) are dense in the identity component
and the odd multiples of \(R\) are dense in the component containing \(R\).
Thus, in all cases, infinitely many rational multiples of \(R\) lie in every
nonempty open arc of the real component containing \(R\).

The coordinate \(\alpha\) is a real analytic local parameter on the positive
branch at every finite point used below. Hence every sufficiently small
one-sided neighborhood of the relevant endpoint on that branch contains
infinitely many rational multiples of \(R\). Taking the neighborhood small
enough to lie inside \((\lambda,\mu)\) proves the claim.
\end{proof}

\subsection{\texorpdfstring{First exceptional case: \(r=0\)}{First exceptional case: r=0}}

\begin{proposition}[The aligned case]\label{prop:r-zero}
Assume the normalized endpoint-line setup and \(r=0\). Then the open segment
\(UV\) contains a rational point \(P\) with \(PA,PB\in\Q\). Writing
\(\rho=-s\), if \(\rho=1\), then there are infinitely many such points. If
\(\rho\ne1\) and the endpoint points are not both torsion on the associated
genus-one curve, then there are infinitely many such points.
\end{proposition}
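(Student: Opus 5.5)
The plan is to split according to whether the special value $\tau=r/(s-1)$ lies in the interval $I=(\lambda,\mu)$, where $\lambda=-a/b$ and $\mu=(D-a)/b$. Here $r=0$, so $\tau=0$.

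\textbf{Step 1: the direct point.} The condition $r=0$ means $c=a$, so the line $AB$ is the vertical line $x=a$, which meets the diagonal line at $X=(a,0)$. Then $XA=b$ and $XB=-d$ are rational, and $XU=|a|$, $XV=|D-a|$ are rational. If $0<a<D$, the point $X$ is a required point. The hypothesis that neither $U$ nor $V$ lies on $AB$ gives $a\neq 0$ and $a\neq D$. So if $X$ is not on the open segment, then either $a<0$ or $a>D$. In both cases $\lambda$ and $\mu$ are nonzero and lie strictly on the same side of $\tau=0$, and they are distinct.

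\textbf{Step 2: the detour condition when $\rho=1$.} Since $r=0$, we have $\alpha_2=-s\alpha_1=\rho\alpha_1$, and the detour condition becomes $\alpha_1\in\calS$ and $\rho\alpha_1\in\calS$. If $\rho=1$, this is just $\alpha_1\in\calS$ (using $\calS=-\calS$). By Lemma~\ref{lem:S}, $\calS$ is dense, so $I$ contains infinitely many admissible rational $\alpha_1$. Each gives $P=(a+b\alpha_1,0)$ on the open segment $UV$ with $PA,PB\in\Q$, by Lemma~\ref{lem:detour}. This gives the first infinitude claim and existence in this subcase.

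\textbf{Step 3: the case $\rho\neq1$.} Parametrize $\alpha_1=(t-t^{-1})/2$. The second condition, that $1+\rho^2\alpha_1^2$ is a square, defines a genus-one quartic. It is birational to the concordant-form curve
\[
E_\rho:\ \eta^2=\xi(\xi+1)(\xi+\rho^2),
\]
exactly as in the proof of Theorem~\ref{thm:finite-aligned}. The endpoint values $\lambda,\mu$ are rational points on the positive real branch, because the side lengths are rational. On that branch, $\sqrt{1+\alpha^2}$ and $\sqrt{1+\rho^2\alpha^2}$ are positive and analytic in $\alpha$, so $\alpha$ is a local real-analytic parameter.

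\textbf{Step 4: the torsion-value table (main obstacle).} The key input is that the finite $\alpha$-values coming from rational torsion on $E_\rho$ are $\alpha=0$, together with $\alpha=\pm1/\sqrt\rho$ when $\rho$ is a rational square, and nothing else. I would take this from Selder--Spindler \cite{SelderSpindler}, as in Theorem~\ref{thm:finite-aligned}. To check it independently, I would argue as follows.
\begin{itemize}
\item The full $2$-torsion is rational. Under the map, the $2$-torsion points correspond to $\alpha\in\{0,\infty\}$.
\item Points of order $4$ exist exactly when the appropriate differences of roots are squares. This happens only when $\rho$ is a square, and then they give $\alpha=\pm\rho^{-1/2}$.
\item Using Mazur's theorem, the remaining possibilities $\Z/2\Z\oplus\Z/6\Z$ and $\Z/2\Z\oplus\Z/8\Z$ must be excluded, or shown to add no new finite $\alpha$-values. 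For $3$-torsion I would try a division-polynomial argument like the one in Lemma~\ref{lem:symmetric-torsion}. For order $8$ I would analyse the $\alpha$-values of the halves of the order-$4$ points, expecting them to map to the same values $\pm\rho^{-1/2}$ or to infinity. This matches Theorem~\ref{thm:finite-aligned}, where the group is $\Z/2\Z\oplus\Z/8\Z$ but only the values $0,\pm4/3$ occur.
\end{itemize}
The consequence needed is that on each side of $0$ there is at most one finite torsion $\alpha$-value.

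\textbf{Step 5: density and conclusion.} Since $\lambda\neq\mu$ lie on the same side of $0$, Step 4 shows that they cannot both be torsion. So at least one endpoint point is non-torsion. Lemma~\ref{lem:density} then gives infinitely many rational $\alpha_1\in I$ on the positive branch. Each yields a point $P=(a+b\alpha_1,0)$ on the open segment $UV$ with $PA,PB\in\Q$.

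This proves existence in every case. It also proves the stated infinitude claim: whenever the endpoint points are not both torsion, Lemma~\ref{lem:density} applies directly, including when $0\in I$.
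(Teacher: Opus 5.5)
Your proposal is correct and follows the paper's proof essentially step for step. It uses the direct point $(a,0)$, i.e.\ $\alpha_1=\tau=0$, when $0\in I$; density of $\calS$ when $\rho=1$; and otherwise the Selder--Spindler torsion-value list (at most one finite torsion value on each side of $0$) together with Lemma~\ref{lem:density}.

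Your optional independent check contains an error, though it does not affect the main argument. On $E_\rho$ the point $(0,0)$ is always divisible by $2$, because the root differences $1$ and $\rho^2$ are squares, so $\Z/2\Z\oplus\Z/4\Z$ is always present and Mazur alone rules out $\Z/2\Z\oplus\Z/6\Z$. The points of order dividing $4$ give only $\alpha\in\{0,\infty\}$. The values $\pm\rho^{-1/2}$ come from order-$8$ points, which exist only when $\rho$ and $1+\rho$ are both rational squares.
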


\begin{proof}
Since \(r=0\), the detour equation is
\[
\alpha_2=-s\alpha_1.
\]
Put
\[
\rho=-s>0.
\]
Thus the required condition is
\begin{equation}\label{eq:r-zero-condition}
\alpha_1\in\calS,\qquad \rho\alpha_1\in\calS.
\end{equation}

First suppose \(\rho=1\). Then \eqref{eq:r-zero-condition} reduces simply to
\(\alpha_1\in\calS\). By Lemma~\ref{lem:S}, \(\calS\) is dense in \(\R\),
so \(I\) contains infinitely many rational values \(\alpha_1\in\calS\).
Lemma~\ref{lem:detour} gives the desired points.

Now suppose \(\rho\ne1\). The special value is $\tau=0$. Parametrize the first condition
\(\alpha_1\in\calS\) by
\[
\alpha_1=\frac{t-t^{-1}}2 .
\]
The second condition \(\rho\alpha_1\in\calS\) gives the Jacobi quartic
\begin{equation}\label{eq:r-zero-quartic}
C_\rho:\quad
W^2=t^4+\left(\frac4{\rho^2}-2\right)t^2+1.
\end{equation}
This is a nonsingular genus-one curve because \(\rho\ne0,1\). Its standard
Jacobi-quartic elliptic model is
\begin{equation}\label{eq:r-zero-elliptic}
E_\rho:\quad
Y^2=X\left(X+\rho^2\right)(X+1).
\end{equation}
The coordinate \(\alpha_1\) is the common-leg coordinate on this model. We
spell out the identification with the concordant-form notation. For
\(h,k\in\Q^\times\), the system
\[
h^2+z^2=Z^2,\qquad k^2+z^2=W^2
\]
is the affine chart \(X_1=1\), \(X_0=z\), of the intersection of quadrics
which Selder and Spindler denote by \(Q(h^2,k^2)\). Their Theorem~2 gives
an explicit isomorphism between this intersection and
\[
y^2=x(x+h^2)(x+k^2),
\]
and their inverse map sends a point of the elliptic curve back to the
quadric coordinates. In the affine chart used here, the corresponding
finite common-leg value is
\[
z=\frac{X_0}{X_1}.
\]
Reading the torsion points listed in Selder--Spindler's Theorem~4 through
this inverse map gives the following torsion values:
\[
z=0,
\quad\text{and, if }hk\in(\Q^\times)^2,\quad
z=\pm\sqrt{hk}.
\]
The points of order \(2\) give the trivial value \(z=0\). In this
square-coefficient subfamily the point \((0,0)\) is divisible by \(2\),
because the differences from the other two roots are \(h^2\) and \(k^2\),
both rational squares. Hence the curve has a rational point of order \(4\).
Moreover the three roots \(0,-h^2,-k^2\) are rational, so the curve has full
rational \(2\)-torsion. Mazur's torsion theorem \cite{Mazur} then restricts
the torsion subgroup to one of the groups
\(\Z/2\Z\times\Z/2m\Z\), \(1\le m\le4\), and the presence of a point of
order \(4\) leaves only \(m=2\) or \(m=4\). Thus the \(3\)- and
\(6\)-torsion cases in Selder--Spindler's general list cannot occur in this
subfamily. The possible \(4\)- and \(8\)-torsion
points give exactly the values shown above. This is precisely the
torsion-solution classification in
\cite[Theorem~2 and Theorem~4]{SelderSpindler}; see also their discussion of
concordant forms in Section~5.1.

Now apply this with
\[
h=1,\qquad k=\frac1\rho,\qquad z=\alpha_1.
\]
Indeed the second equation \(1+\rho^2\alpha_1^2=\square\) is equivalent,
after division by \(\rho^2\), to
\[
\frac1{\rho^2}+\alpha_1^2=\square .
\]
Thus the rational torsion points of \(E_\rho\) can give only
\[
\alpha_1=0
\quad\text{and, when }\rho \text{ is a square in }\Q,\quad
\alpha_1=\pm\frac1{\sqrt{\rho}}.
\]

If \(0\in I\), then \(\alpha_1=0\) gives \(\alpha_2=0\). Since
\(0\in\calS\), Lemma~\ref{lem:detour} gives the desired point.
Moreover, if the two endpoint points are not both torsion on \(C_\rho\),
then at least one endpoint point is non-torsion. The endpoint values lie on
the positive real branch, and the coordinate \(\alpha_1\) is a real analytic
local parameter at those finite endpoint points. Lemma~\ref{lem:density}
therefore gives infinitely many rational solutions of \eqref{eq:r-zero-condition} with
\(\alpha_1\in I\). Thus the asserted infinitude also holds in the subcase
\(0\in I\) whenever an endpoint point is non-torsion.

The torsion-value list above contains at most one positive value and at most one negative
value. Assume now that \(0\notin I\). The endpoint values
\(\lambda,\mu\) are distinct and lie on the same side of \(0\). Since
\(\lambda\ne\mu\), at least one endpoint point on \(C_\rho(\Q)\) is
non-torsion.
Lemma~\ref{lem:density} then gives infinitely many rational solutions of
\eqref{eq:r-zero-condition} with \(\alpha_1\in I\). Lemma~\ref{lem:detour} converts them to
points \(P\) on \(UV\) with \(PA,PB\in\Q\).
\end{proof}

\subsection{\texorpdfstring{Second exceptional case: \(s=-1\)}{Second exceptional case: s=-1}}

\begin{proposition}[The opposite equal-height case]\label{prop:s-minus-one}
Assume the normalized endpoint-line setup, \(s=-1\), and \(r\ne0\). Then
the open segment \(UV\) contains a rational point \(P\) with \(PA,PB\in\Q\).
If the endpoint points are not both torsion on the associated genus-one
curve, then there are infinitely many such points.
\end{proposition}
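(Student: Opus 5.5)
We set \(T:=-r\ne0\). With \(s=-1\) the detour equation \eqref{eq:detour} reads \(\alpha_2=\alpha_1-T\) (up to the sign convention, which is harmless because \(\calS=-\calS\)). The required condition is therefore
\[
\alpha_1\in\calS,\qquad \alpha_1-T\in\calS,\qquad \alpha_1\in I=(\lambda,\mu)\ \text{(or }(\mu,\lambda)\text{)}.
\]
This is exactly the system treated in Lemma~\ref{lem:symmetric-detour-curve}, so it is represented by rational points of \(\Gamma_T\). The curve \(\Gamma_T\) is birational to \(E_T:\ Y^2=X^3+(T^2+2)X^2+X\) via \eqref{eq:symmetric-map} and \eqref{eq:symmetric-inverse}. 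That lemma used only \(T\ne0\), not the parallelogram geometry, so I will reuse it verbatim. The special value is \(\tau=r/(s-1)=T/2\), the \(\alpha_1\)-coordinate of \(AB\cap L\). The standing hypothesis that \(U,V\notin AB\) gives \(\lambda\ne T/2\) and \(\mu\ne T/2\). The endpoints \(\lambda,\mu\) are rational detour values and give rational points of \(\Gamma_T\) on the positive real branch, exactly as in Lemma~\ref{lem:symmetric-endpoint}.

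The first key step is a torsion-value table for \(E_T\). The involution \(\alpha\mapsto T-\alpha\) of the system corresponds to \((t,u)\mapsto(-u^{-1},-t^{-1})\), up to sign choices. It acts on \(E_T\) as a translation or negation composed with a torsion point, so the finite torsion \(\alpha\)-values are stable under \(\alpha\mapsto T-\alpha\). I then split into two cases.

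\emph{Case \(T^2+4\notin(\Q^\times)^2\).} Lemma~\ref{lem:symmetric-torsion} gives \(E_T(\Q)_{\rm tors}=\{\calO,(0,0),(-1,\pm T)\}\). The chart analysis in the proof of Theorem~\ref{thm:finite-example} shows that none of these four points comes from a finite point of \(\Gamma_T\) with \(t,u\ne0\). Hence there are no torsion \(\alpha\)-values at all, both endpoint points are non-torsion, and Lemma~\ref{lem:density} gives infinitely many solutions in \(I\).

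\emph{Case \(T^2+4\in(\Q^\times)^2\).} Here \(1+(T/2)^2=(T^2+4)/4\) is a square, so \(T/2\in\calS\) and \(T/2-T=-T/2\in\calS\). The central value is thus an honest detour value. Now \(E_T\) has full rational \(2\)-torsion and the point \((-1,T)\) of order \(4\), so by Mazur the torsion group is \(\Z/2\Z\oplus\Z/4\Z\) or \(\Z/2\Z\oplus\Z/8\Z\). The three nontrivial \(2\)-torsion points and the order-\(4\) points other than \((-1,\pm T)\) are computed explicitly, as in the \(T=7/12\) table, and should all give \(\alpha=T/2\). The points of exact order \(8\), if any, give one \(\pm\)-pair under the involution, i.e. a symmetric pair \(\eta,T-\eta\).

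Existence then follows from this table. If \(T/2\in I\), the point \(x=a+bT/2\) works directly by Lemma~\ref{lem:detour}. Otherwise \(\lambda\) and \(\mu\) lie strictly on the same side of \(T/2\). The table puts at most one torsion value on each side, and \(\lambda\ne\mu\), so at least one endpoint point is non-torsion. Lemma~\ref{lem:density} then gives infinitely many rational \(\alpha_1\in I\), which Lemma~\ref{lem:detour} converts into points \(P\). The infinitude clause follows the same way: whenever an endpoint point is non-torsion, Lemma~\ref{lem:density} applies regardless of where \(T/2\) lies.

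The main obstacle is the \(8\)-torsion part of the table in the square case. I must show that the order-\(8\) points produce at most the two values \(\eta,T-\eta\) and never two distinct values on the same side of \(T/2\). My first attempt will be a direct computation. I parametrize \(T=(m^2-1)/m\), which makes \(T^2+4\) a square, and factor the cubic. I then halve the order-\(4\) points via the \(2\)-descent criterion: a point is divisible by \(2\) iff its differences with the three roots are squares. This writes the order-\(8\) points in closed form, and I push them through \eqref{eq:symmetric-inverse}, checking that the four resulting \(t\)-values collapse, under \(\alpha=(t-t^{-1})/2\), to exactly one symmetric pair.
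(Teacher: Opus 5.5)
Your proposal is correct and is essentially the paper's proof: the same curve $E_T$, the same torsion-value table (no finite torsion values when $T^2+4\notin(\Q^\times)^2$, and otherwise only $T/2$ plus at most the symmetric pair $(T\pm w)/2$), and the same density lemma. The pair comes from exactly your planned halving of $(1,\pm w)$, whose halves satisfy $X^2-2(1\pm w)X+1=0$, and your split on whether $T/2\in I$ is an equivalent repackaging of the paper's ``both endpoints torsion'' argument. One small slip: the involution realizing $\alpha\mapsto T-\alpha$ on $\Gamma_T$ is $(t,u)\mapsto(u^{-1},t^{-1})$ (or $(-u,-t)$), since your $(-u^{-1},-t^{-1})$ carries $\Gamma_T$ to $\Gamma_{-T}$; nothing in your final argument depends on it.
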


\begin{proof}
Put $T=-r\in\Q^\times$. The detour equation becomes $\alpha_2=\alpha_1-T$.
Thus we need
\begin{equation}\label{eq:s-minus-one-condition}
\alpha_1\in\calS,\qquad \alpha_1-T\in\calS.
\end{equation}
The special value is
\[
\tau=\frac T2=-\frac r2.
\]
Parametrize
\[
\alpha_1=\frac{t-t^{-1}}2,\qquad
\alpha_1-T=\frac{u-u^{-1}}2.
\]
Eliminating \(\alpha_1\) gives
\begin{equation}\label{eq:s-minus-one-gamma}
(t-u)(tu+1)=2Ttu.
\end{equation}
The curve \eqref{eq:s-minus-one-gamma} is birational to the nonsingular elliptic curve
\begin{equation}\label{eq:s-minus-one-elliptic}
E_T:\quad
Y^2=X^3+(T^2+2)X^2+X.
\end{equation}
whose discriminant is
\[
16T^2(T^2+4)\ne0.
\]
The birational map is
\[
X=-1-\frac{2T}{t-u-2T},\qquad
Y=\frac{T(t+u)}{t-u-2T},
\]
with inverse
\begin{equation}\label{eq:s-minus-one-inverse}
t=\frac{TX-Y}{X+1},\qquad
u=-\frac{TX+Y}{X+1}.
\end{equation}
Substitution verifies both directions wherever the denominators
are nonzero. The denominator \(t-u-2T\) is in fact nonzero on the affine
detour curve: if \(t-u-2T=0\), then \(t-u=2T\), and \eqref{eq:s-minus-one-gamma} gives
$2T(tu+1)=2Ttu$, contradicting \(T\ne0\).

We now determine the finite values of \(\alpha_1\) which can be represented
by rational torsion points on \(E_T\). Put \(A=T^2+2\), so
\[
E_T:\quad Y^2=X^3+AX^2+X.
\]
The point $P_4=(-1,T)$ has exact order \(4\), and \(2P_4=(0,0)\).
We first rule out rational points of order \(3\). The third division polynomial of
\(Y^2=X^3+AX^2+X\) is
\[
\psi_3(X)=3X^4+4AX^3+6X^2-1.
\]
If \(\psi_3(X)=0\) for some \(X\in\Q\), then
\[
4T^2X^3=-(X+1)^3(3X-1).
\]
The value \(X=-1\) is impossible, since
\[
\psi_3(-1)=-4T^2\ne0.
\]
Therefore \(X+1\ne0\). Since \(T\ne0\), putting
\[
U=-3X,\qquad V=\frac{6TX^2}{X+1}
\]
gives a rational point on
\[
F:\quad V^2=U(U-3)(U+1).
\]
The curve \(F\) is the Cremona curve \(48\text{.a4}\) after the integral
change of variables
\[
x=9\left(U-\frac23\right),\qquad y=27V,
\]
which gives
\[
y^2=x^3-351x-1890.
\]
Cremona's tables, equivalently the LMFDB entry \(48\text{.a4}\), give the
following result \cite{Cremona,LMFDB48a4}:
\[
F(\Q)=\{\calO,(-1,0),(0,0),(3,0)\}
\simeq \Z/2\Z\times\Z/2\Z.
\]
Thus \(X\in\{1/3,0,-1\}\). Substitution into \(\psi_3\) gives,
respectively,
\[
\psi_3(1/3)=\frac{4T^2}{27},\qquad
\psi_3(0)=-1,\qquad
\psi_3(-1)=-4T^2,
\]
so none is zero. Hence \(E_T(\Q)\) has no rational \(3\)-torsion.

By Mazur's torsion theorem \cite{Mazur}, and since \(E_T(\Q)\) contains a
point of order \(4\), the only remaining possibilities are
\[
\Z/4\Z,\qquad
\Z/8\Z,\qquad
\Z/2\Z\times\Z/4\Z,\qquad
\Z/2\Z\times\Z/8\Z.
\]
The cyclic order-\(8\) case cannot occur. Indeed, if a rational point
\((X,Y)\) satisfied \(2(X,Y)=(-1,\pm T)\), then the \(X\)-coordinate
duplication formula would give
\[
(X+1)^4+4T^2X^2=0,
\]
which is impossible over \(\Q\). Indeed, if \(X\ne0\), then
\[
\left(\frac{(X+1)^2}{2TX}\right)^2=-1,
\]
while if \(X=0\), the left side is \(1\). Therefore
\[
E_T(\Q)_{\rm tors}
\in
\left\{\Z/4\Z,\,
\Z/2\Z\times\Z/4\Z,\,
\Z/2\Z\times\Z/8\Z\right\}.
\]

Extra rational \(2\)-torsion occurs exactly when
\[
T^2+4=w^2\qquad (w\in\Q).
\]
If this condition fails, then the torsion points are only
\(\calO\), \((0,0)\), and \((-1,\pm T)\). The point \(\calO\) gives no
finite detour value, because every finite detour point has
\(t-u-2T\ne0\) and hence maps by the formula to an affine point of
\(E_T\). The remaining torsion points also give no finite value of
\(\alpha_1\): the point \((0,0)\) gives \(t=0\) in \eqref{eq:s-minus-one-inverse}, while
\((-1,\pm T)\) makes the denominator \(X+1\) vanish. These chart exclusions
do not hide any finite detour value. If a finite detour point mapped to
\((0,0)\), then the inverse formula \eqref{eq:s-minus-one-inverse}, which is valid at \(X=0\), would
give \(t=u=0\), impossible because \(t,u\in\Q^\times\). If a finite detour
point mapped to \(X=-1\), then the forward formula would force
\[
\frac{-2T}{t-u-2T}=0,
\]
again impossible because \(T\ne0\).

Assume now that \(T^2+4=w^2\). The two additional nonzero \(2\)-torsion
points have \(Y=0\) and satisfy
\[
X^2+(T^2+2)X+1=0.
\]
For such a point, \eqref{eq:s-minus-one-inverse} gives \(t=TX/(X+1)\). Using the quadratic equation
for \(X\), one obtains
\[
t-\frac1t=T,
\]
so these \(2\)-torsion points give \(\alpha_1=T/2\). The points $(1,\pm w)$ 
also have order \(4\), and \eqref{eq:s-minus-one-inverse} gives
\[
t=\frac{T\mp w}{2},\qquad
t-\frac1t=T;
\]
hence they also give \(\alpha_1=T/2\).

It remains only to inspect possible points of order \(8\), which can occur
only in the last case in the torsion list. Since the points
\((-1,\pm T)\) are not divisible by \(2\) over \(\Q\), every order-\(8\)
torsion point must be a half of one of \((1,\pm w)\). Solving the
duplication equations for
\(2(X,Y)=(1,w)\) gives
\[
X^2-2(1+w)X+1=0
\quad\text{or}\quad
X^2-2(1-w)X+1=0.
\]
For the first quadratic one has \(Y=(w+1)X-1\), and for the second one has
\(Y=(w-1)X+1\). Substitution in \eqref{eq:s-minus-one-inverse} gives in both cases
\[
\alpha_1=\frac{T-w}{2}.
\]
Replacing \(w\) by \(-w\), which is the same calculation for halves of
\((1,-w)\), gives
\[
\alpha_1=\frac{T+w}{2}.
\]
Consequently the finite \(\alpha_1\)-values represented by rational torsion
points are contained in
\[
\frac T2
\quad\text{and possibly a pair}\quad
\eta,\ T-\eta.
\]
Here, when the extra pair occurs, it is explicitly
\[
\eta=\frac{T+w}{2},\qquad T-\eta=\frac{T-w}{2}.
\]
The two values in this pair lie on opposite sides of \(T/2\).
Hence there is at most one torsion value on either side of \(T/2\).

The value \(T/2\) is the value of \(\alpha_1\) at the intersection of the
line \(AB\) with the \(x\)-axis. By the endpoint nondegeneracy hypothesis,
neither endpoint value is equal to \(T/2\).

If at least one endpoint point is non-torsion, then
Lemma~\ref{lem:density} gives infinitely many rational solutions of \eqref{eq:s-minus-one-condition}
with \(\alpha_1\in I\), and Lemma~\ref{lem:detour} gives the required points
\(P\).

It remains to consider the case in which both endpoint points are torsion.
If the endpoint values lie on the same side of \(T/2\), this is impossible,
because there is at most one torsion value on that side. Hence the endpoint
values lie on opposite sides of \(T/2\). By the torsion-value description
above, the only way this can happen is that the endpoint values are the pair
\(\eta,T-\eta\) above. In that case \(T/2\) is also a rational torsion
value on the same detour curve, so
\[
\frac T2\in\calS,\qquad -\frac T2\in\calS.
\]
Since \(T/2\) lies strictly between \(\eta\) and \(T-\eta\), it lies in
\(I\). Lemma~\ref{lem:detour} gives the required point \(P\).
\end{proof}

\subsection{Third exceptional case: the quadratic exceptional identity}

\begin{proposition}[The quadratic exceptional identity]\label{prop:third}
Assume the normalized endpoint-line setup,
\[
r\ne0,\qquad s\ne-1,\qquad
4r^2s=-(1-s^2)^2.
\]
Then the open segment \(UV\) contains a rational point \(P\) with
\(PA,PB\in\Q\). If the special value
\[
\tau=\frac r{s-1}
\]
does not lie in \(I\), then there are infinitely many such points.
\end{proposition}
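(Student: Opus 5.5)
The plan mirrors the proofs of Proposition~\ref{prop:convex-third-exception} and Proposition~\ref{prop:convex-third-infinite}, but without assuming that the diagonals cross. There are two cases, according to whether \(\tau\in I\).

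\emph{Parametrization.} Since \(s<0\), the identity \(4r^2s=-(1-s^2)^2\) gives \(-s=(1-s^2)^2/(4r^2)\). So \(-s\) is a nonzero rational square. Write \(s=-k^2\) with \(k\in\Q_{>0}\); note \(k\ne1\) because \(s\ne-1\). Then \(r=\pm(1-k^4)/(2k)\). The reflection \((\alpha_1,\alpha_2,r)\mapsto(-\alpha_1,-\alpha_2,-r)\) sends \(I\) to \(-I\), fixes \(s\), and preserves the count of detour solutions. We may therefore normalize \(r=(1-k^4)/(2k)\), which gives \(\tau=r/(s-1)=(k^2-1)/(2k)\).

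\emph{Case \(\tau\in I\).} The computation in Proposition~\ref{prop:convex-third-exception} applies verbatim. With \(t=(1-s^2)/(2r)\) we have \(t^2=-s\), hence \((t-t^{-1})/2=\tau\). This shows \(\tau\in\calS\). Also \(r-s\tau=-\tau\), and \(\calS=-\calS\), so \(-\tau\in\calS\) as well. Lemma~\ref{lem:detour} then gives the point \(P=(a+b\tau,0)\) on the open segment \(UV\). This settles existence in this case; no infinitude is claimed here.

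\emph{Case \(\tau\notin I\).} Existence and infinitude are proved together.
\begin{enumerate}
\item Write \(\alpha_1=(t-t^{-1})/2\). The condition \(r-s\alpha_1\in\calS\) becomes the quartic \(C_k\): \(W^2=\) a quartic in \(t\) coming from \(1+(r+k^2\alpha_1)^2\).
\item Take the rational point over \(\alpha_1=\tau\) as origin. Then \(C_k\) is birational to the curve \(E_k\) displayed in Proposition~\ref{prop:convex-third-infinite}.
\item Determine the torsion values, which is the heart of the argument. I would first exhibit the visible torsion, namely the points over \(\tau\) and the points at infinity of the quartic, and show it generates \(\Z/8\Z\). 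By Mazur's theorem the torsion subgroup is then \(\Z/8\Z\) or \(\Z/2\Z\times\Z/8\Z\). An extra rational \(2\)-torsion point would require a discriminant factor such as \(k^4+1\) (up to squares) to be a rational square. This would give a rational point on \(x^4+y^4=z^2\), which by Fermat's theorem forces \(k\in\{0,1\}\), excluded. Using the duplication formula and the inverse birational map, I would then check that every rational torsion point with a finite \(\alpha_1\)-value has \(\alpha_1=\tau\).
\item The endpoint values \(\lambda,\mu\) are rational detour values by the endpoint hypotheses. Since neither \(U\) nor \(V\) lies on \(AB\), we have \(\lambda,\mu\ne\tau\). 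By step 3, both endpoint points are non-torsion.
\item Lemma~\ref{lem:density} now gives infinitely many rational \(\alpha_1\in I\) with \(\alpha_1,\,r-s\alpha_1\in\calS\). Lemma~\ref{lem:detour} turns these into infinitely many points \(P\).
\end{enumerate}

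Only one non-torsion endpoint is actually needed, so the argument can be weakened if necessary. It suffices to show that at most one torsion value lies on each side of \(\tau\). Then, because \(\lambda\ne\mu\) lie on the same side of \(\tau\) (as \(\tau\notin I\)), one of them is non-torsion.

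\emph{Main obstacle.} The hardest step is step 3, the torsion-value table. It requires an explicit birational map \(C_k\to E_k\), the division-polynomial and halving computations, and the reduction of extra \(2\)-torsion to the Fermat quartic, all uniformly in \(k\). If the uniform computation gets messy, my first fallback is the weaker claim above (at most one torsion value per side of \(\tau\)), proved by the same halving argument used in Proposition~\ref{prop:s-minus-one}.
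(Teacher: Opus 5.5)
Your proposal is essentially the paper's proof. It has the same normalization \(s=-k^2\), \(r=(1-k^4)/(2k)\) via reflection, and the same direct point \(\alpha_1=\tau\) when \(\tau\in I\). For \(\tau\notin I\) it uses the same quartic \(C_k\) with origin over \(\tau\) and the same curve \(E_k\), whose torsion is \(\Z/8\Z\) generated by \((0,0)\) (Mazur plus exclusion of extra \(2\)-torsion). A torsion table then shows \(\tau\) is the only finite torsion \(\alpha_1\)-value, so both endpoint points are non-torsion and Lemma~\ref{lem:density} applies. One detail in your step 3 is misidentified: the extra \(2\)-torsion quadratic on \(E_k\) has discriminant \((k-1)^2(k+1)^2(k^2+1)^4(k^4+6k^2+1)\), so the obstruction is \(k^4+6k^2+1\) being a rational square, not \(k^4+1\). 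The paper rules this out by writing \(k=m/n\), using \((m+n)^4+(m-n)^4=2(m^4+6m^2n^2+n^4)\), and invoking the Fermat--Mordell fact that \(X^4+Y^4=2Z^2\) has only solutions with \(X=\pm Y\), rather than \(x^4+y^4=z^2\).
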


\begin{proof}
The exceptional identity and the assumptions \(s<0\), \(r\ne0\) imply
\[
-s=\left(\frac{1-s^2}{2r}\right)^2.
\]
Thus \(-s\) is a positive rational square. Write
\[
s=-k^2,\qquad k\in\Q_{>0}.
\]
Since \(s\ne-1\), we have \(k\ne1\). The exceptional identity gives
\[
r=\varepsilon\frac{1-k^4}{2k},
\qquad \varepsilon\in\{\pm1\}.
\]
The sign of \(r\) is immaterial. Indeed, if $s\alpha_1+\alpha_2=r$
has a rational detour solution with \(\alpha_1\) in an interval \(J\), then
$s(-\alpha_1)+(-\alpha_2)=-r$ has a rational detour solution with \(-\alpha_1\) in the reflected interval
\(-J\), because \(\calS\) is closed under negation. Reflecting back gives
the original solution. Hence it suffices to treat
\[
r=\frac{1-k^4}{2k}.
\]
The other sign follows by this reflection. The special value is
\begin{equation}\label{eq:third-tau}
\tau=\frac r{s-1}=\frac{k^2-1}{2k}.
\end{equation}

If \(\tau\in I\), then put \(\alpha_1=\tau\). Then
\[
\alpha_2=r-s\tau
\frac{1-k^4}{2k}
+k^2\frac{k^2-1}{2k}
\frac{1-k^2}{2k}
- \tau.
\]
Since \(\tau=(k-k^{-1})/2\), both \(\tau\) and \(-\tau\) lie in \(\calS\).
Lemma~\ref{lem:detour} gives the desired point.

Assume now that \(\tau\notin I\). Parametrize
\[
\alpha_1=\frac{t-t^{-1}}2.
\]
The condition \(\alpha_2=r-s\alpha_1\in\calS\) gives the quartic
\begin{equation}\label{eq:third-quartic}
\begin{aligned}
C_k:\quad W^2={}&k^6t^4+2k^3(1-k^4)t^3\\
&+(k^8-2k^6-2k^4+4k^2+1)t^2\\
&+2k^3(k^4-1)t+k^6.
\end{aligned}
\end{equation}
The special point \(\alpha_1=\tau\) corresponds to
\[
O=(t,W)=\bigl(k,\ k(k^2+1)\bigr).
\]
Using \(O\) as the origin, \eqref{eq:third-quartic} is birational to
\begin{equation}\label{eq:third-elliptic}
\begin{aligned}
E_k:\quad
Y^2&-2(k^4-2k^2-1)XY+8k^6(k^4-1)Y\\
&=X^3-4k^4(k^2+1)X^2 .
\end{aligned}
\end{equation}
The explicit birational maps, nonsingularity of this model, and the torsion
table used below are verified in Appendix~\ref{app:caseIII}.
The point $T=(0,0)$ has exact order \(8\). Explicitly, the first four multiples are
\[
\begin{aligned}
T&=(0,0),\\
2T&=\bigl(4k^4(k^2+1),-8k^4(k^2+1)^2\bigr),\\
3T&=\bigl(4k^2(k^2-1)(k^2+1),-8k^4(k^2-1)(k^2+1)\bigr),\\
4T&=(4k^6,-8k^8),
\end{aligned}
\]
and the negation formula on \eqref{eq:third-elliptic} is
\[
-(X,Y)=
\bigl(X,\ -Y+2(k^4-2k^2-1)X-8k^6(k^4-1)\bigr).
\]
Thus
\[
\begin{aligned}
5T&=\bigl(4k^2(k^2-1)(k^2+1),
-8k^2(k^2-1)(k^2+1)^2\bigr),\\
6T&=\bigl(4k^4(k^2+1),0\bigr),\\
7T&=\bigl(0,-8k^6(k^4-1)\bigr),\\
8T&=\calO.
\end{aligned}
\]
Since \(4T\ne\calO\), the order of \(T\) is exactly \(8\).

We now show that \(E_k(\Q)_{\rm tors}=\langle T\rangle\). By Mazur's
torsion theorem \cite{Mazur}, an elliptic curve over \(\Q\) with a rational
point of order \(8\) can have torsion subgroup only
\[
\Z/8\Z
\quad\text{or}\quad
\Z/2\Z\times\Z/8\Z.
\]
Thus extra torsion would force extra rational \(2\)-torsion. We solve the
\(2\)-torsion equation on \eqref{eq:third-elliptic} explicitly. A point \((X,Y)\) has order
\(2\) if and only if it equals its inverse, i.e.
\[
2Y-2(k^4-2k^2-1)X+8k^6(k^4-1)=0.
\]
Substituting
\[
Y=(k^4-2k^2-1)X-4k^6(k^4-1)
\]
into \eqref{eq:third-elliptic} gives
\[
\begin{aligned}
0={}&-(X-4k^6)\\
&\cdot
\bigl(
X^2+(k^8-4k^6-2k^4+4k^2+1)X\\
&\hspace{32mm}
-4k^6(k^8-2k^4+1)
\bigr).
\end{aligned}
\]
The root \(X=4k^6\) gives the known point \(4T\). The quadratic factor has
discriminant
\[
(k-1)^2(k+1)^2(k^2+1)^4(k^4+6k^2+1).
\]
Since \(k\ne0,\pm1\), the quadratic gives additional rational \(2\)-torsion
if and only if $k^4+6k^2+1$ is a square in \(\Q\). Write \(k=m/n\) in lowest terms.
Then such a square would give integers \(m,n,z\), with \(mn\ne0\), satisfying
\[
m^4+6m^2n^2+n^4=z^2.
\]
But then
\[
(m+n)^4+(m-n)^4=2z^2,
\]
because
\[
(m+n)^4+(m-n)^4
2(m^4+6m^2n^2+n^4).
\]
The classical Fermat--Mordell quartic theorem says that
\[
X^4+Y^4=2Z^2
\]
has only the trivial rational solutions \(X=\pm Y\); see Mordell's
discussion \cite{Mordell} and Sidokhine's exposition \cite{Sidokhine}.
Applying this to
\(X=m+n\), \(Y=m-n\) forces \(m+n=\pm(m-n)\), hence \(m=0\) or \(n=0\),
impossible. Hence there is no extra rational \(2\)-torsion, and
\[
E_k(\Q)_{\rm tors}=\langle T\rangle.
\]

It remains to determine which \(\alpha_1\)-values can arise from these eight
torsion points. The point at infinity \(\calO\) on \(E_k\) corresponds to
the chosen origin \(O=(k,k(k^2+1))\) on \(C_k\), and hence to \(t=k\). For
all affine points with \(Y\ne0\), the inverse birational map has
\begin{equation}\label{eq:third-t-map}
t=k+\frac{2k(k^2+1)X}{Y}.
\end{equation}
Substitution gives the following table:
\[
\begin{array}{c|c|c}
\text{torsion point} & t\text{-value} & \alpha_1=(t-t^{-1})/2\\
\hline
\calO & k & \tau\\
T & \text{not in the affine }t\text{-chart} & \text{none}\\
2T & 0 & \text{undefined}\\
3T & -1/k & \tau\\
4T & -1/k & \tau\\
5T & 0 & \text{undefined}\\
6T & \text{not in the affine }t\text{-chart} & \text{none}\\
7T & k & \tau .
\end{array}
\]
Here \(t=0\) is excluded because the parametrization
\(\alpha_1=(t-t^{-1})/2\) requires \(t\in\Q^\times\). Therefore the only
finite \(\alpha_1\)-value represented by a rational torsion point is
\(\alpha_1=\tau\).
Therefore every finite rational point of \(C_k\) with
\(\alpha_1\ne\tau\) is non-torsion.

Since \(\tau\notin I\), the two endpoint values lie on the same side of
\(\tau\), and neither is equal to \(\tau\). Hence both endpoint points are
non-torsion. Lemma~\ref{lem:density} gives infinitely many rational detour
solutions with \(\alpha_1\in I\), and Lemma~\ref{lem:detour} gives the
desired points \(P\) on \(UV\).
\end{proof}

\begin{lemma}[Diagonal criterion for simple quadrilaterals]
\label{lem:diagonal-criterion}
Let \(Q\) be a simple non-degenerate quadrilateral with vertices
$P_1,P_2,P_3,P_4$ in boundary order. Then \(Q\) is convex if and only if the two diagonals
$P_1P_3,\ P_2P_4$ meet in their relative interiors.
\end{lemma}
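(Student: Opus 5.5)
We prove both directions by elementary convexity and orientation (sign-of-determinant) arguments, working with the closed region \(Q\) bounded by the simple polygon \(P_1P_2P_3P_4\).

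\emph{Convex implies the diagonals cross.} Assume \(Q\) is convex. Since no three vertices are collinear, each vertex is an extreme point of the convex hull, so \(Q=\operatorname{conv}\{P_1,P_2,P_3,P_4\}\) is a quadrilateral with these four vertices in cyclic boundary order. Consider the line \(P_1P_3\). The line \(P_1P_2\) supports the convex set \(Q\), so \(P_3\) and \(P_4\) lie strictly on one side of it. Using this and the analogous facts for the other sides, the orientation determinants satisfy
\[
\operatorname{sign}\det(P_3-P_1,\,P_2-P_1)=-\operatorname{sign}\det(P_3-P_1,\,P_4-P_1).
\]
Hence \(P_2\) and \(P_4\) lie strictly on opposite sides of the line \(P_1P_3\), and symmetrically \(P_1\) and \(P_3\) lie strictly on opposite sides of the line \(P_2P_4\). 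The standard segment-intersection criterion then gives that the open segments \(P_1P_3\) and \(P_2P_4\) meet, in a single point, since the two lines are distinct.

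\emph{The diagonals cross implies convex.} Suppose the diagonals meet at a point \(X\) lying in both relative interiors. Write \(X=(1-\lambda)P_1+\lambda P_3=(1-\mu)P_2+\mu P_4\) with \(\lambda,\mu\in(0,1)\). Then \(K=\operatorname{conv}\{P_1,\dots,P_4\}\) is the union of the four triangles \(XP_iP_{i+1}\), which meet only along the segments \([X,P_i]\). The boundary of \(K\) is therefore the closed curve \(P_1P_2P_3P_4\): each \(P_i\) is extreme because no three vertices are collinear, and the hull edges are exactly \(P_iP_{i+1}\), since \(P_1P_3\) and \(P_2P_4\) pass through the interior point \(X\). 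So the simple closed curve \(P_1P_2P_3P_4\) bounds the convex region \(K\). By the Jordan curve theorem its bounded complementary region is unique, so \(Q=K\) is convex.

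The main obstacle is the bookkeeping that identifies the boundary order with the hull order in the first direction; non-degeneracy is used exactly there, and in the strictness of the side conditions. If the orientation argument becomes messy, an alternative is to argue by contrapositive. If \(Q\) is simple but not convex, then one vertex, say \(P_4\), lies inside the triangle \(P_1P_2P_3\) (the reflex-angle case). Then \(P_2\) and \(P_4\) lie on the same side of the line \(P_1P_3\), so the diagonals cannot cross in their interiors. Both directions are then reduced to sign checks of \(2\times2\) determinants.
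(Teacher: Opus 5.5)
Your proposal is correct. The forward direction is the paper's argument in determinant language: each diagonal line strictly separates the other two vertices, so the segments cross. Both you and the paper tacitly use that the hull order of the vertices is the boundary order; you at least flag this.

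The converse is where the routes differ. You argue directly, whereas the paper argues by contrapositive, essentially the alternative you sketch at the end. In the paper's version, a simple, non-degenerate, non-convex quadrilateral has exactly one vertex, say $P_2$, in the interior of $\triangle P_1P_3P_4$. Then $P_2P_4$ stays inside that triangle and misses the side $P_1P_3$.

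Your direct route runs as follows:
\begin{itemize}
\item the crossing point $X$ is interior to $K$, so $P_1P_3$ and $P_2P_4$ are not hull edges;
\item hence the hull edges are exactly the sides $P_iP_{i+1}$, and the polygon is $\partial K$;
\item hence the bounded complementary region is $\operatorname{int}K$.
\end{itemize}
Its merit is that it avoids a fact the paper uses without proof: a simple quadrilateral whose four vertices are all extreme must be convex, i.e.\ the bow-tie order is excluded by simplicity. The paper's route is shorter and skips the boundary identification.

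A few justifications need repair:
\begin{itemize}
\item In the converse, ``each $P_i$ is extreme because no three vertices are collinear'' is not enough, since non-collinearity alone allows a vertex inside the triangle of the other three. Use the crossing instead. For example, $P_1$ and $P_3$ lie strictly on opposite sides of the line $P_2P_4$, so $P_1\notin\operatorname{conv}\{P_2,P_3,P_4\}$.
\item In the forward direction, extremality likewise comes from convexity of $Q$ together with the vertices lying on $\partial Q$, not from non-collinearity alone.
\item The decomposition of $K$ into the four triangles $XP_iP_{i+1}$ is true but asserted without proof. You do not need it once extremality and the identification of the hull edges are in place.
\end{itemize}
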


\begin{proof}
If \(Q\) is convex, then \(Q\) is the convex hull of its four vertices.
The segment \(P_1P_3\) divides \(Q\) into the two triangles
\[
\triangle P_1P_2P_3,\qquad \triangle P_1P_3P_4,
\]
which lie on opposite sides of the line \(P_1P_3\). Similarly, \(P_1\) and
\(P_3\) lie on opposite sides of the line \(P_2P_4\). Therefore the two
segments \(P_1P_3\) and \(P_2P_4\) cross, and because no three vertices are
collinear, the crossing occurs in the relative interior of both segments.

Conversely, suppose \(Q\) is not convex. Since \(Q\) is simple and
non-degenerate, its convex hull cannot have only two vertices, and it cannot
have all four vertices as extreme points. Hence exactly one vertex of \(Q\)
lies in the interior of the triangle formed by the other three vertices.
After relabeling cyclically, assume this vertex is \(P_2\). Then
\(P_2\) lies in the interior of the triangle
\[
\triangle P_1P_3P_4.
\]
The diagonal \(P_1P_3\) is one side of this triangle, while the other
diagonal \(P_2P_4\) is a segment from an interior point of the triangle to
the opposite vertex \(P_4\). Since the triangle is convex, the segment
\(P_2P_4\) is contained in \(\triangle P_1P_3P_4\); because \(P_2\) is an
interior point, every point of \(P_2P_4\) except the endpoint \(P_4\) lies in
the interior of that triangle. Hence \(P_2P_4\) is disjoint from the side
\(P_1P_3\). Thus the diagonals do not meet in their relative interiors.
The same argument applies no matter which vertex is the unique non-extreme
vertex. Therefore, if the diagonals meet in their relative interiors, the
quadrilateral must be convex.
\end{proof}

\subsection{The concave both-endpoints-torsion alternative}

The phrase ``both endpoint points are torsion'' refers to the two rational
points on the auxiliary genus-one curve lying above the two endpoint
parameters of the interval \(I\). This auxiliary condition can occur in the
convex symmetric parallelogram with rational other diagonal, but it does not
occur in the simple non-degenerate concave application.

\begin{proposition}[The concave both-endpoints-torsion alternative is empty]
\label{prop:no-concave-both-torsion}
Let \(Q\) be a simple non-degenerate concave quadrilateral. In the normalized
setup for the endpoint-line theorem, write
\[
U=(0,0),\qquad V=(D,0),\qquad
A=(a,b),\qquad B=(c,d),\qquad b>0>d,
\]
with \(UV\) the interior diagonal. In each exceptional endpoint-line case
\[
r=0,\qquad s=-1,\ r\ne0,\qquad
4r^2s=-(1-s^2)^2,
\]
the proof does not enter a finite both-endpoints-torsion alternative. In
the torsion-controlled subcases, the two endpoint points on the corresponding
auxiliary genus-one curve are not both torsion. Consequently these simple
non-degenerate concave exceptional cases give infinitely many rational points on
\(UV^\circ\), not finite examples.
\end{proposition}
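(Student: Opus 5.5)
The key geometric input is Lemma~\ref{lem:diagonal-criterion}. Since \(Q=UAVB\) is concave, the diagonals \(UV\) and \(AB\) do not meet in their relative interiors. But \(A\) and \(B\) lie on opposite sides of the \(x\)-axis, so the segment \(AB\) does meet the line \(UV\). The crossing point is \(X=(a+b\tau,0)\), with \(\tau=r/(s-1)\) as in Section~\ref{sec:exceptional}. By the criterion, \(X\) is not on the open segment \(UV\). Non-degeneracy excludes \(X=U\) and \(X=V\), because neither endpoint lies on the line \(AB\). Hence \(\tau\notin\overline I\), and the two endpoint values \(\lambda,\mu\) lie strictly on the same side of \(\tau\). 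The rest of the argument feeds this single fact into the three torsion-value tables already established.

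\emph{Case \(r=0\).} Here \(\tau=0\). If \(\rho=-s=1\), infinitude follows from the density of \(\calS\), as in Proposition~\ref{prop:r-zero}. If \(\rho\ne1\), the Selder--Spindler torsion list used in that proposition gives at most one torsion value on each side of \(0\), namely \(\pm1/\sqrt\rho\). Since \(\lambda\ne\mu\) lie on the same side of \(0\), at least one endpoint point is non-torsion, and Lemma~\ref{lem:density} gives infinitely many points.

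\emph{Case \(s=-1\), \(r\ne0\).} Here \(\tau=T/2\). The torsion description in Proposition~\ref{prop:s-minus-one} says the finite torsion values are \(T/2\) together with possibly a pair \(\eta,T-\eta\) on opposite sides of \(T/2\). So at most one torsion value lies on each side of \(T/2\), and again one endpoint point is non-torsion. The only branch of that proposition that stops short of infinitude is the one where the endpoints are the pair \(\eta,T-\eta\). That branch requires the endpoints to straddle \(T/2\), which concavity forbids, so it is never entered.

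\emph{Quadratic exceptional case.} This is exactly the subcase \(\tau\notin I\) of Proposition~\ref{prop:third}. There both endpoint points are non-torsion, because \(\tau\) is the only finite torsion value, and infinitude follows.

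One technical point needs care. Lemma~\ref{lem:density} is stated for the positive real branch, and I must check that both endpoint points lie on it with \(\alpha_1\) a local analytic parameter. The branch is given by \(\sqrt{1+\alpha_1^2}>0\) and \(\sqrt{1+(r-s\alpha_1)^2}>0\), which never vanish, so this is the argument already used in Proposition~\ref{prop:generic}. I would also note that in the \(s=-1\) case the chart exclusions of Proposition~\ref{prop:s-minus-one} mean the endpoint points really correspond to the listed affine torsion data.

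I expect the main obstacle to be the first step: justifying rigorously that the crossing point \(X\) of the line \(AB\) with the \(x\)-axis is the point where the segment \(AB\) meets the line \(UV\). That is what lets Lemma~\ref{lem:diagonal-criterion} place \(X\) outside the closed segment \(UV\). Here it matters that \(b>0>d\), and that \(X\ne U,V\) by the standing hypothesis on \(AB\). I would state this explicitly and then conclude.
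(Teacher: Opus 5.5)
Your proposal is correct and follows the paper's proof essentially step for step. Lemma~\ref{lem:diagonal-criterion} places the crossing value \(\tau=r/(s-1)\) outside \(I\), and the three torsion-value tables then force a non-torsion endpoint point, to which Lemma~\ref{lem:density} applies: at most one torsion value on each side of \(\tau\) when \(r=0\) or \(s=-1\), and only \(\tau\) itself in the quadratic case. Your explicit remark that non-degeneracy gives \(\tau\notin\overline I\) is slightly tidier than the paper, whose proof loosely says the endpoint values ``lie in \(I\)'' and relies on \(\lambda,\mu\ne\tau\) from the start of Section~\ref{sec:exceptional}.
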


\begin{proof}
Let
\[
I=\left(-\frac ab,\frac{D-a}{b}\right)
\]
be the interval of \(\alpha\)-values corresponding to the open diagonal
segment \(UV^\circ\). The special value
\[
\tau=\frac r{s-1}
\]
has a uniform geometric meaning in all exceptional endpoint-line cases:
\[
x=a+b\tau
\]
is the intersection point of the line \(AB\) with the \(x\)-axis, i.e. with
the line containing the diagonal \(UV\). For a simple non-degenerate concave
quadrilateral with \(UV\) as the interior diagonal, this intersection point
does not lie in the open diagonal segment. Indeed, the segment \(AB\)
crosses the \(x\)-axis because \(b>0>d\). If this crossing point also lay in
\(UV^\circ\), then the two diagonals \(AB\) and \(UV\) would meet in their
relative interiors, which is the convex alternative for a simple
quadrilateral by Lemma~\ref{lem:diagonal-criterion}. Therefore, in the
simple non-degenerate concave case, $\tau\notin I$.

We now check the three exceptional cases.

First suppose \(r=0\). Then $\tau=0$.
In the unequal-height subcase, the concordant-form torsion-value calculation
says that, on each side of \(0\), there is at most one torsion value of the
coordinate \(\alpha\). Since \(0\notin I\), the two distinct endpoint values
\[
\lambda=-\frac ab,\qquad
\mu=\frac{D-a}{b}
\]
lie on the same side of \(0\). They therefore cannot both be torsion. In
the equal-height subcase \(\rho=1\), the proof is even simpler: the
condition reduces to \(\alpha\in\calS\), and \(\calS\) is dense,
so infinitely many points occur directly.

Next suppose \(s=-1\) and \(r\ne0\). Put $T=-r$.
The special value is
\[
\tau=\frac T2.
\]
The torsion-value calculation for
\[
E_T:\quad Y^2=X^3+(T^2+2)X^2+X
\]
says that, apart from the central value \(T/2\), any additional finite
torsion values occur as a symmetric pair
\[
\eta,\qquad T-\eta,
\]
one on each side of \(T/2\). If both endpoint values were torsion, then
because \(\tau\notin I\) they would have to lie on the same side of
\(\tau=T/2\). But the torsion table contains at most one torsion value on
each side of \(T/2\), a contradiction to \(\lambda\ne\mu\). Equivalently,
if the two endpoint values were the symmetric pair \(\eta,T-\eta\), then
\(T/2\) would lie between them, forcing \(\tau\in I\), which is the convex
situation rather than the simple non-degenerate concave one.

Finally suppose
\[
r\ne0,\qquad s\ne-1,\qquad
4r^2s=-(1-s^2)^2.
\]
This is the quadratic exceptional identity. The Case III torsion table says
that the only finite torsion value of the coordinate \(\alpha\) is precisely $\alpha=\tau$.
But \(\tau\notin I\), while the endpoint values \(\lambda,\mu\) lie in
\(I\). Hence neither endpoint point is torsion.

Thus, in every simple non-degenerate concave exceptional endpoint-line case, at least one
endpoint point is non-torsion. The density lemma for a non-torsion endpoint
then gives infinitely many rational points on the real component containing
that endpoint. At the finite endpoint points used in these exceptional
curves, the positive real detour branch is nonsingular and the coordinate
\(\alpha\) is a real analytic local parameter. Taking a sufficiently small
one-sided neighborhood inside \(I\) therefore gives infinitely many rational
detour values in \(I\), hence infinitely many rational points on
\(UV^\circ\) at rational distance from all four vertices.
\end{proof}

The finite both-endpoints-torsion phenomenon is real, but it belongs to the
convex symmetric parallelogram with rational other diagonal, not to the
simple non-degenerate concave cases. The example
\[
U=(0,0),\quad A=(18,24),\quad V=(50,0),\quad B=(32,-24)
\]
in Theorem~\ref{thm:finite-example} is convex and has exactly one interior
point on \(UV\).

\begin{theorem}[Exceptional endpoint theorem]\label{thm:exceptional}
Assume the normalized endpoint-line setup and one of the exceptional
conditions in \eqref{eq:exceptional-list}. Then the open segment \(UV\)
contains a rational point \(P\) such that \(PA,PB\in\Q\). In the cases
where the proof finds a non-torsion endpoint point, the segment contains
infinitely many such rational points.
\end{theorem}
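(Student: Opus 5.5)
The plan is to treat Theorem~\ref{thm:exceptional} as a case split over the three exceptional conditions in \eqref{eq:exceptional-list} and invoke the corresponding proposition from this section in each case. Since \(s<0\) in the normalized setup, the condition \(4r^2s=(1-s^2)^2\) forces \(r=0\) and \(s=-1\). So the list covers all exceptional alternatives to Proposition~\ref{prop:generic}, and it partitions as follows:
\begin{itemize}
\item[(i)] \(r=0\);
\item[(ii)] \(s=-1\) with \(r\ne0\);
\item[(iii)] \(r\ne0\), \(s\ne-1\) and \(4r^2s=-(1-s^2)^2\).
\end{itemize}
Throughout, the hypothesis that neither \(U\) nor \(V\) lies on \(AB\) gives \(\lambda\ne\tau\ne\mu\) for \(\tau=r/(s-1)\). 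This is what each proposition needs.

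\textbf{Case (i).} Proposition~\ref{prop:r-zero} gives a rational point \(P\) on the open segment \(UV\) with \(PA,PB\in\Q\). There are three subcases.
\begin{itemize}
\item If \(\rho=1\), or if an endpoint point on \(C_\rho\) is non-torsion, that proposition already gives infinitely many such points.
\item If \(0\in I\), the point \(\alpha_1=0\), i.e.\ \(x=a\), works directly.
\item If \(0\notin I\), the torsion-value list contains at most one value on each side of \(0\), so both endpoints cannot be torsion, and Lemma~\ref{lem:density} gives infinitely many points.
\end{itemize}

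\textbf{Case (ii).} Proposition~\ref{prop:s-minus-one} gives existence, and infinitude whenever an endpoint point is non-torsion. In the remaining both-endpoints-torsion situation, the torsion table for \(E_T\) forces \(\{\lambda,\mu\}=\{\eta,T-\eta\}\). Then \(T/2\in I\) is itself a detour value, and it supplies the point.

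\textbf{Case (iii).} Proposition~\ref{prop:third} applies.
\begin{itemize}
\item If \(\tau\in I\), then \(\pm\tau\in\calS\) gives the point directly.
\item If \(\tau\notin I\), the torsion table shows \(\tau\) is the only finite torsion value, so both endpoint points are non-torsion and we get infinitely many points.
\end{itemize}

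In every case, Lemma~\ref{lem:detour} converts a rational detour value \(\alpha_1\in I\) into \(P=(a+b\alpha_1,0)\) with \(0<x<D\). It also gives \(PU=x\) and \(PV=D-x\) rational. This yields the final sentence of Theorem~\ref{thm:endpoint-line}. Combined with Proposition~\ref{prop:generic}, it completes the endpoint line theorem.

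The main obstacle is bookkeeping rather than new mathematics. One check is that the three cases really exhaust the exceptional list with no overlap; cases (i) and (ii) overlap only at \(r=0,s=-1\), which is handled by (i) with \(\rho=1\). The other check is that each proposition was proved under exactly the normalized endpoint-line hypotheses, with no convexity assumption. The most delicate point is the both-torsion subcase of (ii): there I will re-verify that the torsion value \(T/2\) lies strictly inside \(I\), using \(\lambda\ne\mu\) and the fact that \(\eta\) and \(T-\eta\) sit symmetrically about \(T/2\).
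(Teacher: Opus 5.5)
Your proposal is correct and follows essentially the same route as the paper: you split into $r=0$, $s=-1$ with $r\ne0$, and the quadratic identity with $r\ne0,\ s\ne-1$; you dispatch these to Proposition~\ref{prop:r-zero}, Proposition~\ref{prop:s-minus-one}, and Proposition~\ref{prop:third}; and you use $s<0$ to show the list is exhaustive. Your recaps of the internal subcases (the point at $\alpha_1=0$, the both-torsion endpoint pair $\{\eta,T-\eta\}$ with $T/2\in I$, and the direct use of $\tau$) just restate arguments already inside those propositions, so the paper's three-line dispatch is all the theorem needs.
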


\begin{proof}
If \(r=0\), this is Proposition~\ref{prop:r-zero}. If \(r\ne0\) and
\(s=-1\), this is Proposition~\ref{prop:s-minus-one}. Finally, if
\[
r\ne0,\qquad s\ne-1,\qquad
4r^2s=-(1-s^2)^2,
\]
this is Proposition~\ref{prop:third}. Since \(s<0\), these alternatives
exhaust the exceptional cases.
\end{proof}

\begin{proof}[Proof of the endpoint line theorem]
By Lemma~\ref{lem:normalization}, reduce to the normalized setup. If the
generic hypotheses hold, Proposition~\ref{prop:generic} gives the result.
If not, since \(s<0\), one of the exceptional alternatives in
\eqref{eq:exceptional-list} holds, and Theorem~\ref{thm:exceptional} gives
the result. Transforming back by the inverse rational isometry preserves
rationality of points and all distances.
\end{proof}

\section{Combining everything}
\label{sec:application}

\begin{lemma}[Geometry of an interior diagonal]\label{lem:opposite-sides}
Let \(Q\) be a simple non-degenerate quadrilateral, convex or concave, and let
\(UV\) be an interior diagonal. If \(A\) and \(B\) are the other two
vertices, then \(A\) and \(B\) lie on opposite sides of the line through
\(UV\).
\end{lemma}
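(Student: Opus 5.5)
The plan is to use the interior-diagonal hypothesis directly. The diagonal \(UV\) has relative interior inside the interior of \(Q\), so it splits \(Q\) into two triangles \(\triangle UVA\) and \(\triangle UVB\) whose interiors are disjoint. We will show that \(A\) and \(B\) cannot lie on the same side of the line \(L\) through \(U\) and \(V\).

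First we observe that neither \(A\) nor \(B\) lies on \(L\). Indeed, \(U,V,A\) collinear would mean three vertices of \(Q\) are collinear, contradicting non-degeneracy; the same holds for \(B\).

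Next, take a point \(m\) in the open segment \(UV\), say its midpoint. Since \(m\) lies in the interior of \(Q\), a small disk \(N\) around \(m\) is contained in \(Q\). The line \(L\) cuts \(N\) into two open half-disks \(N^+\) and \(N^-\), one on each side of \(L\). Each half-disk lies in the interior of \(Q\) minus the diagonal, and hence in the interior of one of the two pieces. Near \(m\), the triangle \(\triangle UVA\) occupies only the half-disk on \(A\)'s side of \(L\): for \(N\) small enough, the points of \(N\) inside \(\triangle UVA\) are exactly the points of \(N\) on \(A\)'s closed side. The same holds for \(\triangle UVB\) with \(B\)'s side. If \(A\) and \(B\) were on the same side, say \(N^+\), then the half-disk \(N^-\) would contain interior points of \(Q\) lying in neither triangle. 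This contradicts \(Q=\triangle UVA\cup\triangle UVB\).

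The main obstacle is justifying rigorously that \(Q\) is exactly the union of these two triangles. This is the step where simplicity and the interior-diagonal hypothesis are used. I would argue via the Jordan curve theorem. The segment \(UV\) splits the interior of \(Q\) into two regions, bounded by the closed curves \(U A V\cup UV\) and \(U B V\cup UV\). Each of these curves is a triangle boundary, since the sides \(UA, AV\) together with the chord form a closed polygon with three vertices, and such a polygon bounds exactly the triangle. This gives \(Q=\triangle UVA\cup\triangle UVB\).

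As an alternative, I could give a purely convexity-based argument. If \(A\) and \(B\) are on the same side of \(L\), then one of the two triangles contains the other vertex, or the segments \(UB\) and \(AV\) cross. Both outcomes contradict simplicity or make \(UV\) an exterior or boundary diagonal. I would present the Jordan-curve version and keep this one as a remark.
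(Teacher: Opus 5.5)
Your proof is correct, but it takes a different route from the paper.

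\textbf{Your route.} You first establish the decomposition \(Q=\triangle UVA\cup\triangle UVB\), then do a local half-disk analysis at a point of \(UV^\circ\). The decomposition step is really the \(\theta\)-curve theorem: an arc through the interior of a Jordan domain, joining two boundary points, cuts the domain into exactly two pieces, each bounded by the arc together with one boundary arc. That theorem needs more than the bare statement of the Jordan curve theorem, and your proposal only sketches it.

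\textbf{The paper's route.} The paper never decomposes \(Q\). Suppose \(A,B\) lie in one closed half-plane \(H\) bounded by \(L\). By convexity of \(H\), all four sides lie in \(H\). The opposite open half-plane is then connected, unbounded and disjoint from \(\partial Q\), so it lies in the unbounded complementary component. Hence the interior of \(Q\) is contained in \(H\). This is impossible, because every disk around a point of \(UV^\circ\) meets the opposite open half-plane. The argument uses only the elementary fact about the unbounded component and needs no local analysis.

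\textbf{Comparison.} Your route proves more: it gives exactly the splitting that the paper asserts without proof in Lemma~\ref{lem:quad-diagonal}. Once you have that splitting, your half-disk argument collapses to the one-line observation \(Q\subset H\). For the lemma as stated, though, the paper's half-plane argument is more economical and fully rigorous with less machinery.

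Your convexity-based alternative is only a sketch. In the case where one vertex lies inside the other triangle, showing that \(UV\) then fails to be an interior diagonal again requires a Jordan-type argument.
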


\begin{proof}
Let \(L\) be the line through \(U\) and \(V\). Since \(Q\) is
non-degenerate, neither \(A\) nor \(B\) lies on \(L\). Suppose, for
contradiction, that \(A\) and \(B\) lie on the same side of \(L\). Let
\(H\) be the closed half-plane bounded by \(L\) which contains \(A\) and
\(B\). The straight sides of \(Q\) are the four segments in one of the two
cyclic orders
\[
UA,\ AV,\ VB,\ BU
\quad\text{or}\quad
UB,\ BV,\ VA,\ AU.
\]
Because a half-plane is convex, all four sides of \(Q\) lie in \(H\).
Therefore the boundary of \(Q\) is contained in \(H\). The open half-plane
opposite \(H\) is disjoint from the boundary and is connected and unbounded;
by the Jordan curve theorem it is contained in the unbounded component of the
complement of the boundary. Hence the bounded interior of the simple polygon
lies in \(H\). But if \(W\) is any point in the relative interior of \(UV\),
then every
Euclidean disk centered at \(W\) contains points in the open half-plane
opposite \(H\), and those points are not in the interior of \(Q\). Thus
\(W\) cannot be an interior point of \(Q\), contradicting the assumption that
the relative interior of \(UV\) is contained in the interior of \(Q\). Hence
\(A\) and \(B\) lie on opposite sides of \(L\).
\end{proof}

\begin{proof}[Proof of Theorem~\ref{thm:main}]
Let \(A\) and \(B\) be the two vertices of \(Q\) different from \(U,V\).
Because \(UV\) is an interior diagonal of a simple quadrilateral, the
boundary order, up to reversal, is either
\[
U,A,V,B
\quad\text{or}\quad
U,B,V,A.
\]
Thus the four endpoint distances
\[
UA,\ VA,\ UB,\ VB
\]
are precisely the four side lengths of \(Q\), and hence are integers. Also
\(UV\) is an integer by hypothesis. By Lemma~\ref{lem:opposite-sides}, the
points \(A\) and \(B\) lie on opposite sides of the rational line through the
integer points \(U,V\). Since \(Q\) is non-degenerate, neither \(U\) nor
\(V\) lies on the line \(AB\).

We now separate the convex and concave cases.

First suppose that \(Q\) is concave. Theorem~\ref{thm:endpoint-line}
applies directly to the rational line through \(U,V\). Hence there is a
rational point \(P\) on the open segment \(UV\) such that
\[
PA,\ PB\in\Q.
\]
Since \(P\in UV\), and since \(U,V\) are rational points with
\(UV\in\Q\), the distances \(PU\) and \(PV\) are also rational. Indeed,
write
\[
P=(1-\theta)U+\theta V .
\]
Since \(P,U,V\) have rational coordinates and \(U\ne V\), one coordinate of
\(V-U\) is nonzero, and that coordinate gives \(\theta\in\Q\). Since
\(P\in UV^\circ\), we also have \(0<\theta<1\). Hence
\[
PU=\theta\,UV,\qquad PV=(1-\theta)UV
\]
are rational. Because \(UV\) is an interior diagonal, \(P\) lies in the
interior of \(Q\). Thus the theorem is proved in the concave case.

Now suppose that \(Q\) is convex. Apply Lemma~\ref{lem:normalization} to
the diagonal \(UV\). After, if necessary, reflecting in the \(x\)-axis and
interchanging the names of \(A\) and \(B\), we may assume
\[
U=(0,0),\qquad V=(D,0),\qquad D\in\Q_{>0},
\]
\[
A=(a,b),\qquad B=(c,d),\qquad b>0>d.
\]
The rational isometry preserves simplicity, convexity, and incidence, so in
the normalized coordinates the boundary order may still be taken to be
\(U,A,V,B\), up to reversal.
The four endpoint distances \(UA,VA,UB,VB\) remain rational. By convexity
and Lemma~\ref{lem:diagonal-criterion}, the diagonal \(AB\) meets the open
segment \(UV\). Put
\[
s=\frac bd,\qquad r=\frac{c-a}{d}.
\]
Then \(s<0\).

If
\[
r\ne0,\qquad s\ne\pm1,\qquad
4r^2s\ne\pm(1-s^2)^2,
\]
then Proposition~\ref{prop:generic} gives infinitely many rational points
on the open segment \(UV\) whose distances from \(A\) and \(B\) are
rational. For any one of them, the distances to \(U\) and \(V\) are
rational because the point has rational \(x\)-coordinate and
\(0<x<D\). Applying the inverse rational Euclidean isometry gives the
required point in the original quadrilateral. Thus the convex
non-exceptional case is complete. We may therefore assume from now on in the
convex proof that the generic hypotheses fail.

It remains to consider the exceptional convex alternatives. Since \(s<0\),
the reduction at the beginning of Section~\ref{sec:convex-exceptional}
shows that the only possibilities are
\[
r=0,\qquad s=-1,\qquad
4r^2s=-(1-s^2)^2.
\]
If \(r=0\), Proposition~\ref{prop:convex-r-zero} gives the required point.
Assume next that \(r\ne0\). If
\[
s\ne-1,\qquad 4r^2s=-(1-s^2)^2,
\]
then Proposition~\ref{prop:convex-third-exception} gives the required point.
Finally suppose \(s=-1\). If \(a+c\ne D\), then
Proposition~\ref{prop:convex-s-minus-one-nonsym} gives the required point.
If \(a+c=D\), then Proposition~\ref{prop:convex-symmetric} gives the
required point.

In every convex subcase we have obtained a rational point on the normalized
open segment \(UV\) whose distances from the four normalized vertices are
rational. Applying the inverse rational Euclidean isometry sends it back to
a rational point on the original open segment \(UV\) and preserves all four
distances. Since \(UV\) is an interior diagonal, this point lies in the
interior of \(Q\).
\end{proof}

\section*{Appendix}

The appendix is purely computational and algebraic: it verifies the
birational model and torsion table used only in Proposition~\ref{prop:third}.

\appendix

\section{Verification of the Case III model and torsion table}
\label{app:caseIII}

This appendix verifies the birational model and the torsion table used in
Proposition~\ref{prop:third}. Throughout \(k\in\Q_{>0}\) and \(k\ne1\).
In particular, \(k\ne0,\pm1\), which is the nonvanishing condition used in
the algebra below. Put
\[
\begin{aligned}
f_k(t)={}&k^6t^4+2k^3(1-k^4)t^3\\
&+(k^8-2k^6-2k^4+4k^2+1)t^2\\
&+2k^3(k^4-1)t+k^6 .
\end{aligned}
\]
Thus
\[
C_k:\quad W^2=f_k(t)
\]
is the quartic in \eqref{eq:third-quartic}. The chosen origin is
\[
O=(k,k(k^2+1)).
\]
We also write
\[
u=t-k
\]
and define
\[
H_k(u)=k(k^2+1)+(-k^4+2k^2+1)u+k^3u^2.         \tag{A.1}
\]
A direct expansion gives the identity
\[
H_k(u)^2=f_k(k+u)-4k^5(k^2-1)u^3.             \tag{A.2}
\]

\subsection*{The birational maps}

Let \(E_k\) be the generalized Weierstrass curve
\[
\begin{aligned}
E_k:\quad
Y^2&-2(k^4-2k^2-1)XY+8k^6(k^4-1)Y\\
&=X^3-4k^4(k^2+1)X^2 .
\end{aligned} \tag{A.3}
\]
Equivalently, if
\[
\mathcal E_k(X,Y)=
Y^2-2(k^4-2k^2-1)XY+8k^6(k^4-1)Y
-X^3+4k^4(k^2+1)X^2,
\]
then \(E_k\) is given by \(\mathcal E_k(X,Y)=0\).
For this generalized Weierstrass equation the discriminant is
\[
\Delta(E_k)
4096\,k^{16}(k-1)^2(k+1)^2(k^2+1)^4
(k^4+6k^2+1).                                 \tag{A.4}
\]
Since \(k\in\Q_{>0}\) and \(k\ne1\), every factor in (A.4) is nonzero.
Indeed, \(k\), \(k-1\), and \(k+1\) are nonzero, while
\(k^2+1>0\) and \(k^4+6k^2+1>0\). Thus \(E_k\) is a nonsingular elliptic
curve over \(\Q\).

Define a rational map
\[
\Phi:C_k\dashrightarrow E_k
\]
on the open set \(u\ne0\) by
\[
X=\frac{2k(k^2+1)(W+H_k(u))}{u^2}, \tag{A.5}
\]
\[
Y=\frac{4k^2(k^2+1)^2(W+H_k(u))}{u^3}. \tag{A.6}
\]
Substitution of (A.5) and (A.6) into the left side of (A.3) gives
\[
\mathcal E_k(\Phi(t,W))
\frac{8k^3(k^2+1)^3(W+H_k(u))(f_k(k+u)-W^2)}
{u^6}.                                   \tag{A.7}
\]
Hence \(\Phi\) maps \(C_k\) to \(E_k\) wherever it is defined.

Conversely, define
\[
\Psi:E_k\dashrightarrow C_k
\]
on the open set \(Y\ne0\) by
\[
t=k+\frac{2k(k^2+1)X}{Y}, \tag{A.8}
\]
\[
W=
\frac{k(k^2+1)X^3+
8k^7(k^2-1)(k^2+1)^2Y}{Y^2}. \tag{A.9}
\]
Substitution gives
\[
W^2-f_k(t)
\frac{k^2(k^2+1)^2G_k(X,Y)\mathcal E_k(X,Y)}
{Y^4},                                    \tag{A.10}
\]
where
\[
\begin{aligned}
G_k(X,Y)={}&
-X^3-4k^4(k^2+1)X^2
+2(k^4-2k^2-1)XY\\
&-Y^2+8k^6(k^4-1)Y .
\end{aligned}
\]
Therefore \(\Psi\) maps \(E_k\) to \(C_k\) wherever it is defined.

It remains to check that these maps are inverse on a common dense open set.
Starting with a point of \(C_k\), equations (A.5) and (A.6) give
\[
\frac{2k(k^2+1)X}{Y}=u,
\]
so (A.8) recovers \(t=k+u\). Substituting (A.5) and (A.6) into (A.9), and
using (A.2) together with \(W^2=f_k(k+u)\), recovers the original \(W\).
Thus \(\Psi\circ\Phi\) is the identity where both maps are defined.

For the other composition, start with a point of \(E_k\) with \(Y\ne0\), and
put
\[
u=\frac{2k(k^2+1)X}{Y}.
\]
Using (A.9), a direct simplification gives
\[
\begin{aligned}
W+H_k(u)
&=
\frac{k(k^2+1)}{Y^2}
\left(
X^3+4k^4(k^2+1)X^2
\right.\\
&\hspace{20mm}\left.
-2(k^4-2k^2-1)XY
+Y^2+8k^6(k^4-1)Y
\right).
\end{aligned}
\]
On \(E_k\), the expression in parentheses is \(2X^3\), because
\(\mathcal E_k(X,Y)=0\). Hence
\[
W+H_k(u)=\frac{2k(k^2+1)X^3}{Y^2}.             \tag{A.11}
\]
Substitution of (A.11) into (A.5) and (A.6) gives back the original
\((X,Y)\). Therefore \(\Phi\) and \(\Psi\) are inverse birational maps.

Finally, as \(u\to0\) on \(C_k\) along the branch through
\[
O=(k,k(k^2+1)),
\]
the formulas (A.5) and (A.6) have poles, so \(O\) maps to the point at
infinity \(\calO\) of \(E_k\). Conversely, the expansion of (A.8) and
(A.9) at \(\calO\) gives \(t\to k\) and \(W\to k(k^2+1)\). Thus the chosen
origin on \(C_k\) is indeed the point \(\calO\) on \(E_k\).

\subsection*{The multiples of the distinguished torsion point}

We now verify the torsion table on \(E_k\). The curve (A.3) is in the
generalized Weierstrass form
\[
Y^2+a_1XY+a_3Y=X^3+a_2X^2,
\]
with
\[
a_1=-2(k^4-2k^2-1),\qquad
a_3=8k^6(k^4-1),\qquad
a_2=-4k^4(k^2+1).
\]
The negation formula is therefore
\[
-(X,Y)=
\bigl(X,\ -Y+2(k^4-2k^2-1)X-8k^6(k^4-1)\bigr). \tag{A.12}
\]

The following points lie on \(E_k\), by direct substitution:
\[
\begin{aligned}
P_1&=(0,0),\\
P_2&=\bigl(4k^4(k^2+1),-8k^4(k^2+1)^2\bigr),\\
P_3&=\bigl(4k^2(k^2-1)(k^2+1),
-8k^4(k^2-1)(k^2+1)\bigr),\\
P_4&=(4k^6,-8k^8),\\
P_5&=\bigl(4k^2(k^2-1)(k^2+1),
-8k^2(k^2-1)(k^2+1)^2\bigr),\\
P_6&=\bigl(4k^4(k^2+1),0\bigr),\\
P_7&=\bigl(0,-8k^6(k^4-1)\bigr).
\end{aligned}                                          \tag{A.13}
\]
Equation (A.12) gives
\[
-P_1=P_7,\qquad -P_2=P_6,\qquad -P_3=P_5,\qquad -P_4=P_4.
\]
Thus \(P_4\) is a point of order \(2\).

We use the chord-and-tangent law. A line meets a generalized Weierstrass
cubic in three points counted with multiplicity, and the sum of those three
points is \(\calO\). The tangent at \(P_1=(0,0)\) is \(Y=0\), and
\[
\mathcal E_k(X,0)=X^2(-X+4k^4(k^2+1)).
\]
Thus the tangent at \(P_1\) meets the curve again at \(P_6=-P_2\), so
\[
2P_1=P_2.
\]
The line through \(P_1\) and \(P_2\) is
\[
Y=-2(k^2+1)X.
\]
Substitution gives
\[
\mathcal E_k(X,-2(k^2+1)X)
X\bigl(-X+4k^2(k^4-1)\bigr)
\bigl(-X+4k^4(k^2+1)\bigr).
\]
The third intersection point is \(P_5=-P_3\), so
\[
P_1+P_2=P_3.
\]
The line through \(P_1\) and \(P_3\) is
\[
Y=-2k^2X.
\]
Substitution gives
\[
\mathcal E_k(X,-2k^2X)
- X\bigl(-X+4k^6\bigr)
\bigl(-X+4k^2(k^4-1)\bigr).
\]
The third intersection point is \(P_4=-P_4\), so
\[
P_1+P_3=P_4.
\]
We have proved \(2P_1=P_2\), \(3P_1=P_3\), and \(4P_1=P_4\). Since
\(P_4\) is a nonzero point of order \(2\), it follows that \(8P_1=\calO\).
The negation relations above then give
\[
5P_1=-3P_1=P_5,\qquad
6P_1=-2P_1=P_6,\qquad
7P_1=-P_1=P_7.
\]
Thus
\[
nP_1=P_n\quad (1\le n\le7),
\qquad 8P_1=\calO.
\]
Since \(4P_1=P_4\ne\calO\), the point \(T=P_1\) has exact order \(8\).

\subsection*{The t-values of the torsion points}

The inverse map \(\Psi\) gives
\[
t=k+\frac{2k(k^2+1)X}{Y}                         \tag{A.14}
\]
for affine points with \(Y\ne0\). The point \(\calO\) corresponds to the
chosen origin \(O\) on \(C_k\), hence to \(t=k\). For the finite torsion
points in (A.13), substitution in (A.14) gives
\[
\begin{array}{c|c|c}
\text{point on }E_k & t\text{-value} &
\alpha_1=(t-t^{-1})/2\\
\hline
\calO & k & (k-k^{-1})/2\\
P_1=T & \text{not in the }Y\ne0\text{ chart} & \text{none}\\
P_2=2T & 0 & \text{undefined}\\
P_3=3T & -1/k & (k-k^{-1})/2\\
P_4=4T & -1/k & (k-k^{-1})/2\\
P_5=5T & 0 & \text{undefined}\\
P_6=6T & \text{not in the }Y\ne0\text{ chart} & \text{none}\\
P_7=7T & k & (k-k^{-1})/2 .
\end{array}
\]
The entries \(t=0\) are excluded because the parametrization
\[
\alpha_1=\frac{t-t^{-1}}2
\]
requires \(t\in\Q^\times\). It remains to justify that the two torsion
points with \(Y=0\), namely \(P_1\) and \(P_6\), do not hide any additional
finite \(t\)-values outside the chart (A.14). For a point in the domain of
\(\Phi\), equations (A.5) and (A.6) give
\[
\frac{Y}{X}=\frac{2k(k^2+1)}{u}
\]
whenever \(X\ne0\). Therefore no finite point with \(u\ne0\) can map to
\(P_6\), because \(P_6\) has \(X\ne0\) and \(Y=0\). If a finite point with
\(u\ne0\) mapped to \(P_1=(0,0)\), then (A.5) would force
\[
W+H_k(u)=0.
\]
Together with \(W^2=f_k(k+u)\), identity (A.2) would give
\[
4k^5(k^2-1)u^3=0,
\]
contradicting \(u\ne0\) and \(k\ne0,\pm1\). Thus neither \(P_1\) nor
\(P_6\) contributes a finite nonzero \(t\)-value with \(u\ne0\). If
\(u=0\), then \(t=k\), which gives the already-listed value
\(\alpha_1=\tau\). Consequently the only finite \(\alpha_1\)-value
produced by rational torsion is
\[
\frac{k-k^{-1}}2=\frac{k^2-1}{2k}=\tau,
\]
which is exactly the torsion-value assertion used in
Proposition~\ref{prop:third}. \\

{\bf Acknowledgements}: 
The author wrote the proofs case by case and used AI to verify the proofs and find the missing gaps. The author fixed those missing gaps and iteratively verified the proofs with AI. Some of the examples were generated by AI and verified by the author. The introduction section was almost entirely written by AI and verified by the author. The author is solely responsible for the correctness of proofs.

\end{document}